\documentclass{imsart}
\arxiv{2406.18397v2}
\usepackage[
  margin=2cm,
  includefoot,
  footskip=30pt,
]{geometry}
\usepackage{amsmath,amsthm,amssymb}
\usepackage[utf8]{inputenc}
\usepackage[T1]{fontenc}
\usepackage[english]{babel}
\usepackage{dsfont}
\usepackage[table]{xcolor}
\usepackage{hyperref}
 \definecolor{burgundy}{rgb}{0.5, 0.0, 0.13}
 \definecolor{camel}{rgb}{0.76, 0.6, 0.42}
 \definecolor{chamoisee}{rgb}{0.63, 0.47, 0.35}
 \definecolor{grey1}{RGB}{128,128,128}
\hypersetup{colorlinks=true,linkcolor=burgundy,citecolor=chamoisee,urlcolor=burgundy}
\numberwithin{equation}{section}
\usepackage{natbib}
\bibliographystyle{apalike}
\usepackage{tikz}
\usepackage{booktabs}



\newtheorem{theorem}{Theorem}
\newtheorem{proposition}{Proposition}
\newtheorem{lemma}[proposition]{Lemma}

\newtheorem{remark}{Remark}

\def\beq{\begin{equation}}
\def\eq{\begin{equation}}
\def\eeq{\end{equation}}
\def\qe{\end{equation}}
\def\beqn{\begin{eqnarray*}}
\def\eeqn{\end{eqnarray*}}
\def\bitem{\begin{itemize}}
\def\eitem{\end{itemize}}
\def\benum{\begin{enumerate}}
\def\eenum{\end{enumerate}}
\def\bmult{\begin{multline*}}
\def\emult{\end{multline*}}
\def\bcenter{\begin{center}}
\def\ecenter{\end{center}}


\DeclareMathOperator*{\argmax}{arg\, max}
\DeclareMathOperator*{\argmin}{arg\, min}

\DeclareMathOperator{\I}{Id}
\DeclareMathOperator{\diag}{diag}




\def\bU{\boldsymbol{U}}


\def\bb{\mathbf{b}}



\def\bbE{\mathds{E}}

\def\bbH{\mathds{H}}

\def\bbR{\mathds{R}}

\def\bb\bU{\mathds{\bU}}

\newcommand{\E}{\operatorname{\mathds{E}}}
\newcommand{\leb}{ {\rm Leb}}
\renewcommand{\P}{\operatorname{\mathds{P}}}
\renewcommand{\phi}{\varphi}
\renewcommand{\bar}{\overline}
\renewcommand{\hat}{\widehat}
\renewcommand{\tilde}{\widetilde}
\newcommand{\Var}{\operatorname{Var}}
\newcommand{\Cov}{\operatorname{Cov}}

\def\\bUnif{\text{\bUnif}}



\newcommand\ind{{\mathbf 1}}
\newcommand{\1}{\mathds{1}}

\def\Perm{{\mathfrak S}}


\newcommand\R{{\mathds R}}


\begin{document}

\begin{frontmatter}

\title{Second Maximum of a Gaussian~Random~Field\\ and Exact ($t$-)Spacing test}

\runtitle{Generalized $t$-Spacing test}

\begin{aug}
\author[A]{\fnms{Jean-Marc} \snm{Aza\"is}\ead[label=e1]{jean-marc.azais@math.univ-toulouse.fr}} 
\author[B]{\fnms{Federico} \snm{Dalmao}\ead[label=e2]{fdalmao@unorte.edu.uy}}
\author[C,D]{\fnms{Yohann} \snm{De Castro}\ead[label=e3]{yohann.de-castro@ec-lyon.fr}
}

\address[A]{Institut de Math\'ematiques de Toulouse\\ Universit\'e Paul Sabatier, France\printead[presep={,\ }]{e1}}
\address[B]{DMEL, CENUR Litoral Norte\\  Universidad de la República, Salto, Uruguay\printead[presep={,\ }]{e2}}
\address[C]{Institut Camille Jordan, CNRS UMR 5208\\  École Centrale Lyon, France\printead[presep={,\ }]{e3}}
\address[D]{Institut Universitaire de France (IUF)}

\runauthor{Aza\"is, Dalmao, and De Castro}
\end{aug}

\begin{abstract}

In this article, we introduce the novel concept of the second maximum of a Gaussian random field defined on a Riemannian manifold. This statistic serves as a powerful tool for characterizing the distribution of the global maximum. By employing a tailored Kac-Rice formula, we derive the explicit distribution of the maximum, conditioned on the second maximum and the independent part of the Riemannian Hessian. This approach yields an exact test based on the spacing between these maxima, which we refer to as the spacing test. 

We investigate the applicability of this test for detecting sparse alternatives in Gaussian symmetric tensors, continuous sparse deconvolution, and two-layered neural networks with smooth rectifiers. Theoretical results are supported by numerical experiments that illustrate the calibration and power of the proposed procedures. More generally, we provide a framework for applying the spacing test to continuous sparse regression for any Gaussian random field on a Riemannian manifold. 

Furthermore, when the variance-covariance function is known only up to a scaling factor, we derive an exact Studentized version of the procedure, coined the t-spacing test. We show that this test is perfectly calibrated under the null hypothesis and exhibits high power for detecting sparse alternatives. As a corollary, we derive an exact, non-asymptotic test for the spiked tensor model without requiring prior knowledge of the noise level.
\end{abstract}

\begin{keyword}[class=MSC]
\kwd[Primary ]{62E15}
\kwd{62F03}
\kwd{60G15}
\kwd{62H10}
\kwd[; secondary ]{60E05}
\kwd{60G10}
\kwd{62J05}
\end{keyword}

\begin{keyword}
\kwd{Gaussian random field}
\kwd{Second maximum}
\kwd{Spacing test}
\kwd{Kernel regression}
\kwd{Tensor PCA}
\end{keyword}

\end{frontmatter}

\maketitle

{
\it
\footnotesize
\begin{center}
Version of \today
\end{center}
}

\section{Introduction}

\subsection{The $(t$-$)$spacing test and the second maximum}
This paper introduces a test for the mean $m(\cdot)$ of a Gaussian random field~$Z(\cdot)$, defined on a~$\mathcal C^2$-compact Riemannian manifold ${ M}$ of dimension $d$ without boundary, which can be decomposed~as
\begin{equation}
\label{e:Z_process}
\forall t\in  M\,,\quad
Z(t) = m(t) + \sigma X(t)\,,
\end{equation}
where $m(\cdot)$ is any $\mathcal C^2$-function, $\sigma>0$ is the standard deviation, and~$X(\cdot)$ is a centered Gaussian random field such that $\Var[X(t)]=1$ for every $t\in { M}$. We consider the simple global null hypothesis~$\mathds H_0\,:\,\text{‘‘}m(\cdot)=0\,\text{''}$, even when the standard error~$\sigma$ is unknown.  The test investigated is the quantile~$\hat\alpha$ of the maximum $\lambda_1$ under the null defined as
\begin{equation}
\label{eq:lambda1_t1_intro}
\lambda_1=\max_{t\in  M}\{\sigma X(t)\}
\end{equation}
and we denote by $t_1\in M$ its argument maximum. 

\newpage

\newcommand\sizespec{4.8cm}

 \begin{figure}[!ht]
\begin{centering}
\includegraphics[height=\sizespec]{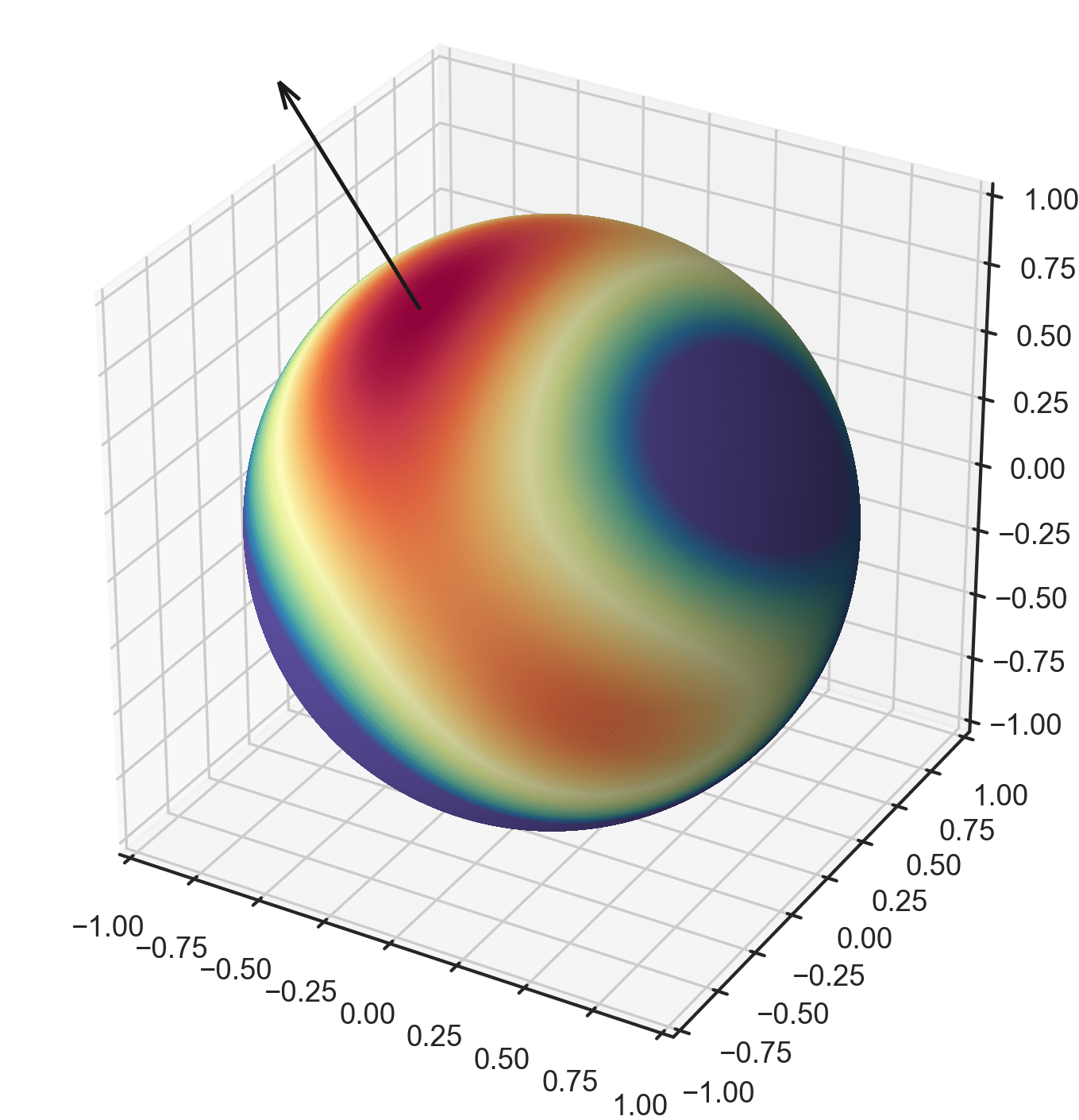}
\hspace*{1cm}
\includegraphics[height=\sizespec]{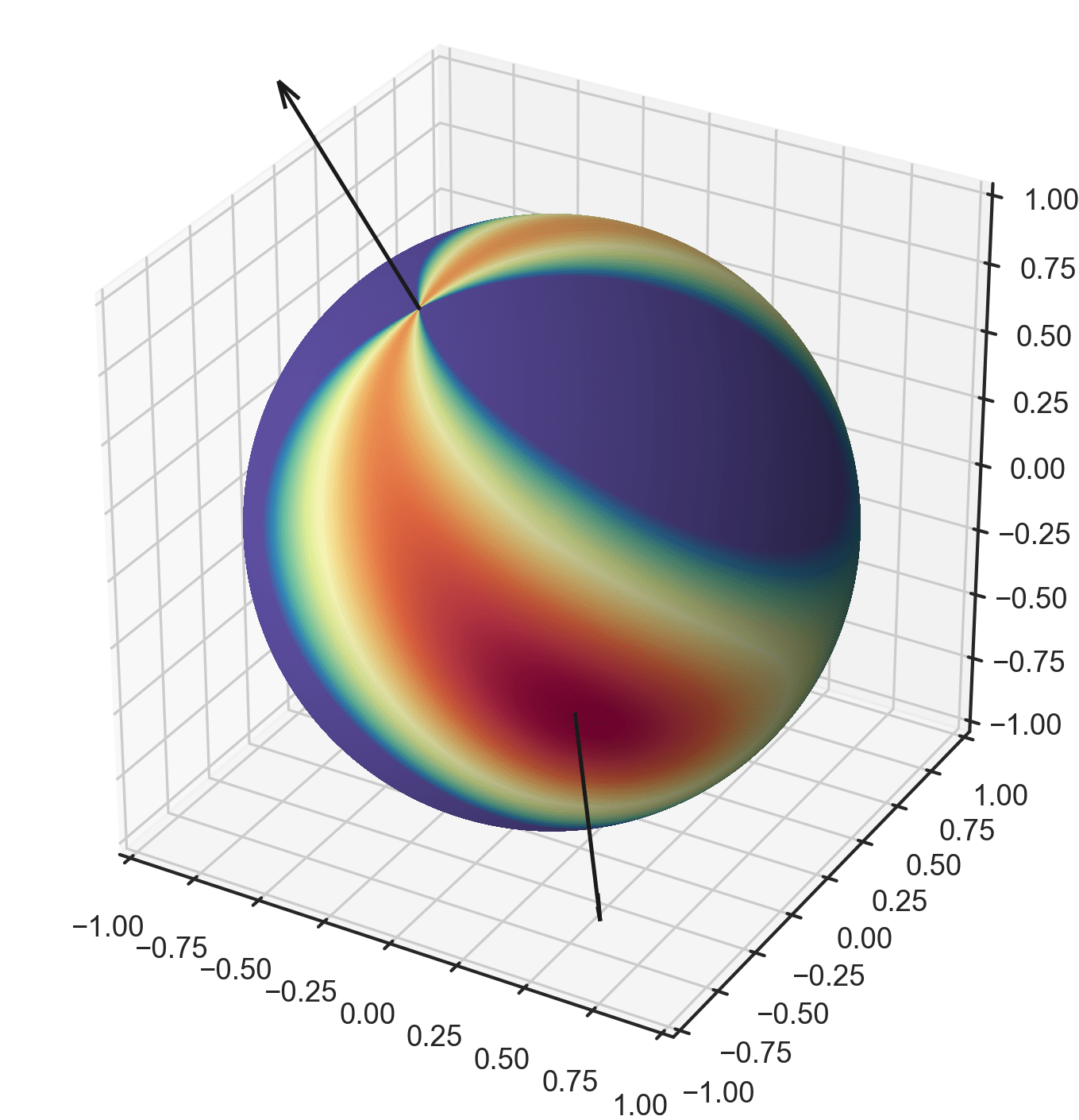}
\hspace*{1cm}
\includegraphics[height=\sizespec]{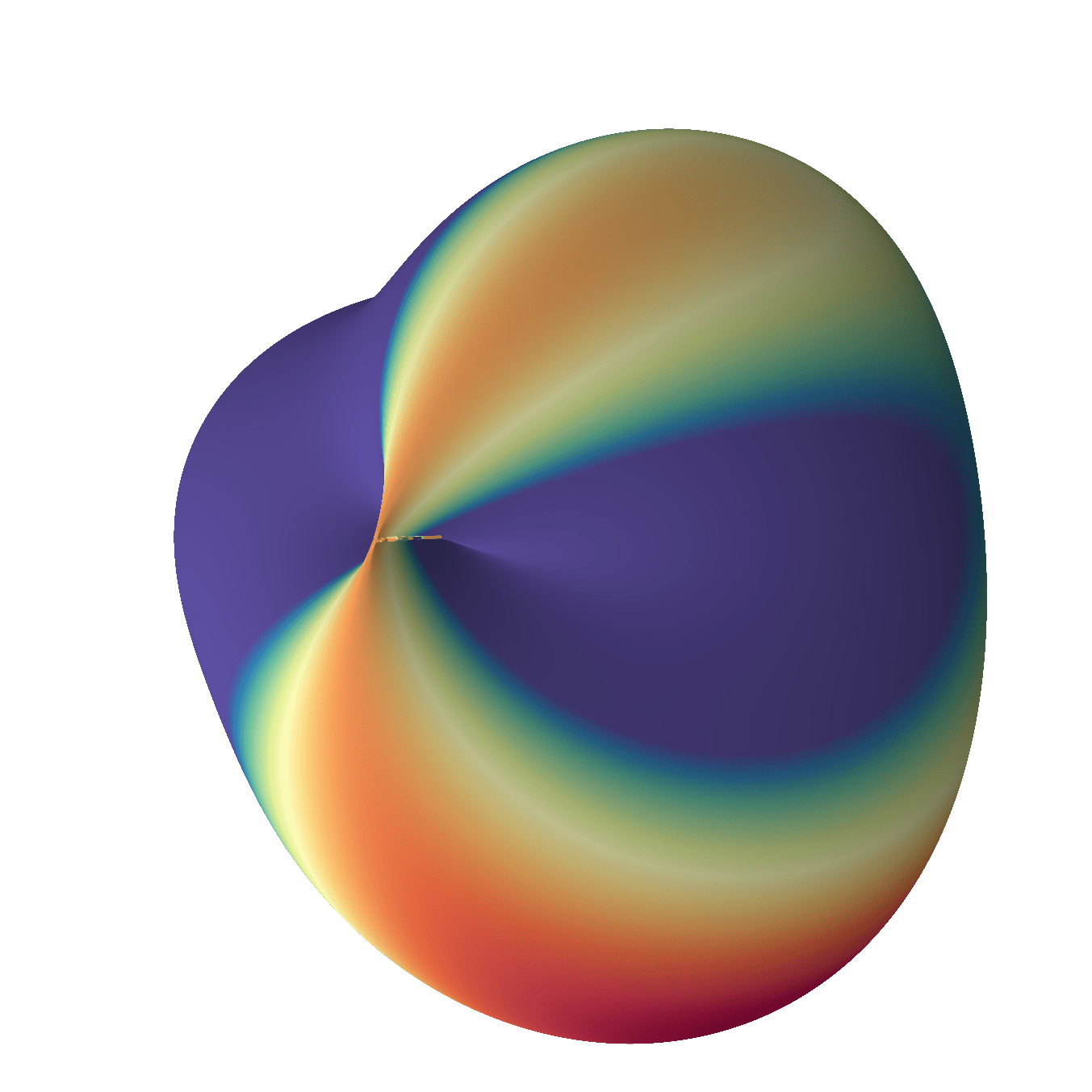}
\end{centering}
\caption{{\bf [Spiked tensor PCA, Section~\ref{sec:spike_tensor_model}, example 1/4]} 
Visualisation of the random fields on the sphere $\mathds S^2$ for the Spiked Tensor PCA problem. \textbf{Left:} The Gaussian homogeneous polynomial $X(\cdot)$, where the arrow indicates the global maximizer $t_1$ (first eigenvector). \textbf{Middle:} The conditional random field $X^{|t_1}(\cdot)$ defined in \eqref{e:Xbarra_intro}, where the arrow indicates the second maximizer $t_2$. \textbf{Right:} A volumetric view of $X^{|t_1}(\cdot)$ where the radial height represents the field value.}
\label{fig:eigenvector}
\end{figure}

This quantile is built from the distribution of~$\lambda_1$ conditional on the values of the so-called second maximum $\lambda_2$ and the so-called independent part of the Riemannian Hessian~$\Omega$, under the null. Suppose, in a first step, that $\sigma $ is known, in such a case we will demonstrate that the cumulative distribution of this conditional law can be expressed as a ratio, resulting in the following expression derived from an ad-hoc {K}ac-{R}ice formula,
\begin{equation}
\label{e:hat_alpha}
\hat\alpha 
:=\frac{\displaystyle\int_{{\lambda_1}/\sigma}^{+\infty}\!\!\!\!\!\!\det ( u \mathrm{Id}-\Omega/\sigma)    \phi(u )\, \mathrm{d}u}
{\displaystyle\int_{{\lambda_2}/\sigma}^{+\infty}\!\!\!\!\!\!\det ( u \mathrm{Id}-\Omega/\sigma)    \phi(u )\, \mathrm{d}u}
\,,
\end{equation}
where $\varphi(\cdot)$ is the standard Gaussian density.

\medskip

We will show the exactness of this test, meaning that $\hat\alpha$ is uniformly distributed on the interval~$(0,1)$ under the null hypothesis. It demonstrates that one minus the ratio \eqref{e:hat_alpha} represents the cumulative distribution function (CDF) of the law of $\lambda_1$ conditional on $\lambda_2$ and $\Omega$, under the null hypothesis. Small values of~$\hat\alpha$ indicate that the maximum $\lambda_1$ is abnormally large compared to the value of the second maximum $\lambda_2$, making $\hat\alpha$ interpretable as a $p$-value. This test is referred to as the ‘‘spacing test''. The second maximum is defined as 
\begin{align} 
	\label{e:Xbarra_intro}
	\lambda_2
		&\in\arg\max_{s\neq t_1\in  M}\, \{\sigma X^{|t_1}(s)\}\,,\\ \notag
	\text{ with, for }s\neq t\,,\  X^{|t}(s) 
		&:= \frac{X(s)-c(s,t)\,X(t)-\nabla_t c(s,t)^\top\Lambda_2^{-1}(t)\nabla X(t)}{1-c(s,t)}
\end{align}
where $c(\cdot,\cdot)$ is the variance-covariance function of $X(\cdot)$ and $\Lambda_2(t) := \Var[\nabla X(t)]$, the formal definition will be given in Section~\ref{sec:regression_remainders}. Note that $X^{|t}(s)$ is a normalisation of the remainder of the regression of $X(s)$ with respect to $(X(t),\nabla X(t))$. An example of the second maximum in the rank-one tensor detection case, referred to as Spiked tensor PCA, is illustrated in Figure~\ref{fig:eigenvector} and developed in Section~\ref{sec:spike_tensor_model}.

\renewcommand\sizespec{5.5cm}

\begin{figure}[!ht]
\includegraphics[height=\sizespec]{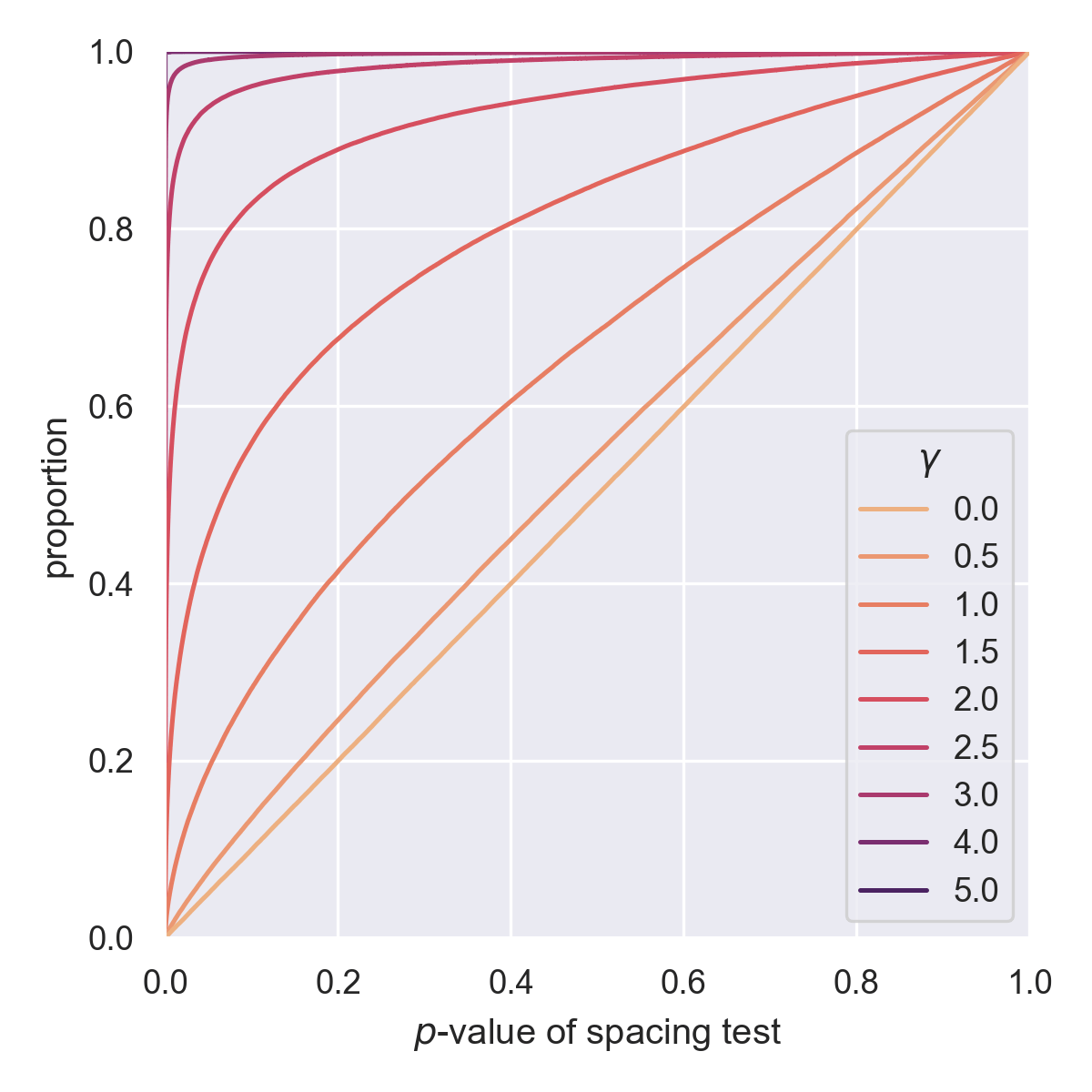}
\hspace*{1.5cm}
\includegraphics[height=\sizespec]{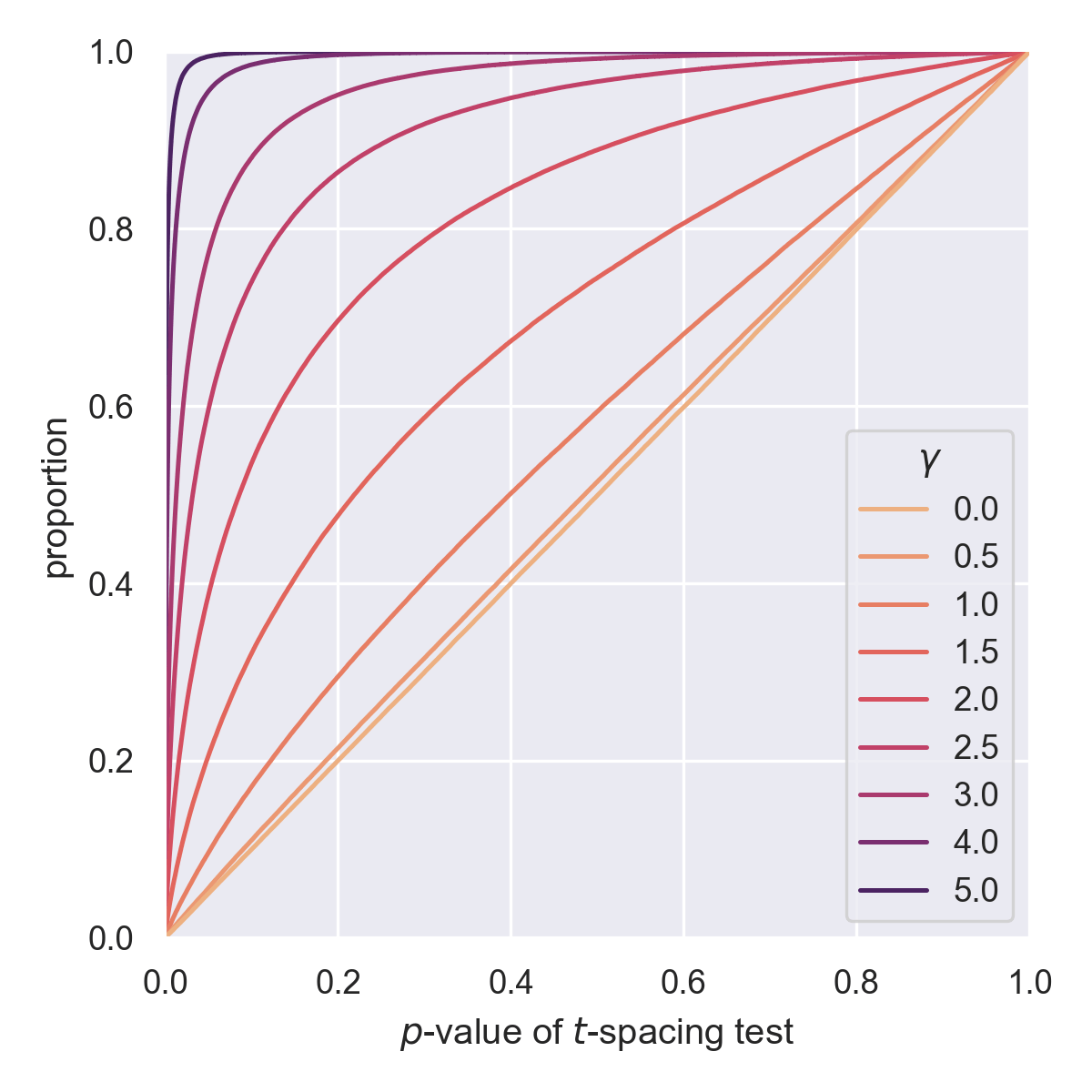}
\caption{{\bf [Spiked tensor PCA, Section~\ref{sec:spike_tensor_model}, example 2/4]} The CDFs of the $p$-value of ($t$-)spacing tests over $250,000$ Monte-Carlo samples for each value of $\gamma$. Note that these tests are perfectly calibrated under the null hypothesis, for which $\gamma=0$. The parameter $\gamma$ is a scaling factor of eigenvalue of the rank-one tensor to be detected. The value~$\gamma=1$ corresponds to the so-called phase transition in Spiked tensor PCA as presented in \citet[Theorem~1.3]{perry2020statistical}. In the $t$-spacing test, the variance $\sigma$ has been estimated on~$X^{|t_1}(\cdot)$.}
\label{fig:cdf_spacings}
\end{figure}

\medskip

Furthermore, we will show how to achieve the same result when the variance~$\sigma^2$ is unknown using an estimation~$\hat\sigma^2$ built from the Karhunen-Loève expansion of $X^{|t_1}(\cdot)$, see the right panel of Figure~\ref{fig:eigenvector} for an illustration of this random field in the case of rank-one tensor detection. We refer to this test as the $t$-spacing test which detects abnormally large spacing between~$\lambda_1/\hat \sigma$ and~$\lambda_2/\hat\sigma$. Note that $\lambda_1$ and $\lambda_2$ are the same for the spacing and the $t$-spacing test, and these values can be computed without knowing~$\sigma$. These results are supported by numerical experiments as illustrated in Figure~\ref{fig:cdf_spacings}. 

\newpage

\subsection{Detecting one-sparse alternatives in continuous sparse kernel regression}

Our test is perfectly calibrated for {all} $\mathcal C^2$-mean $m(\cdot)$. But, we expect this test to have high power on $s$-sparse alternatives of the form 
\[
m(\cdot)=\sum_{k=1}^{s}\lambda_{0,k} c(\cdot,t_{0,k})\,,
\]
and we denote one-sparse alternatives by, for some $\lambda_0\neq 0$ and $t_0\in  M$ unknown, 
\begin{equation}
\label{eq:def_alternative}
\mathds H_1(t_0,\lambda_0)\,:\,\text{‘‘}\,m(\cdot)=\lambda_0 c(\cdot,t_0)\,\text{''}\,.
\end{equation}

\medskip
\renewcommand\sizespec{3.8cm}
\begin{figure}[!ht]
\begin{centering}
\begin{tabular}{cc}
\includegraphics[height=\sizespec]{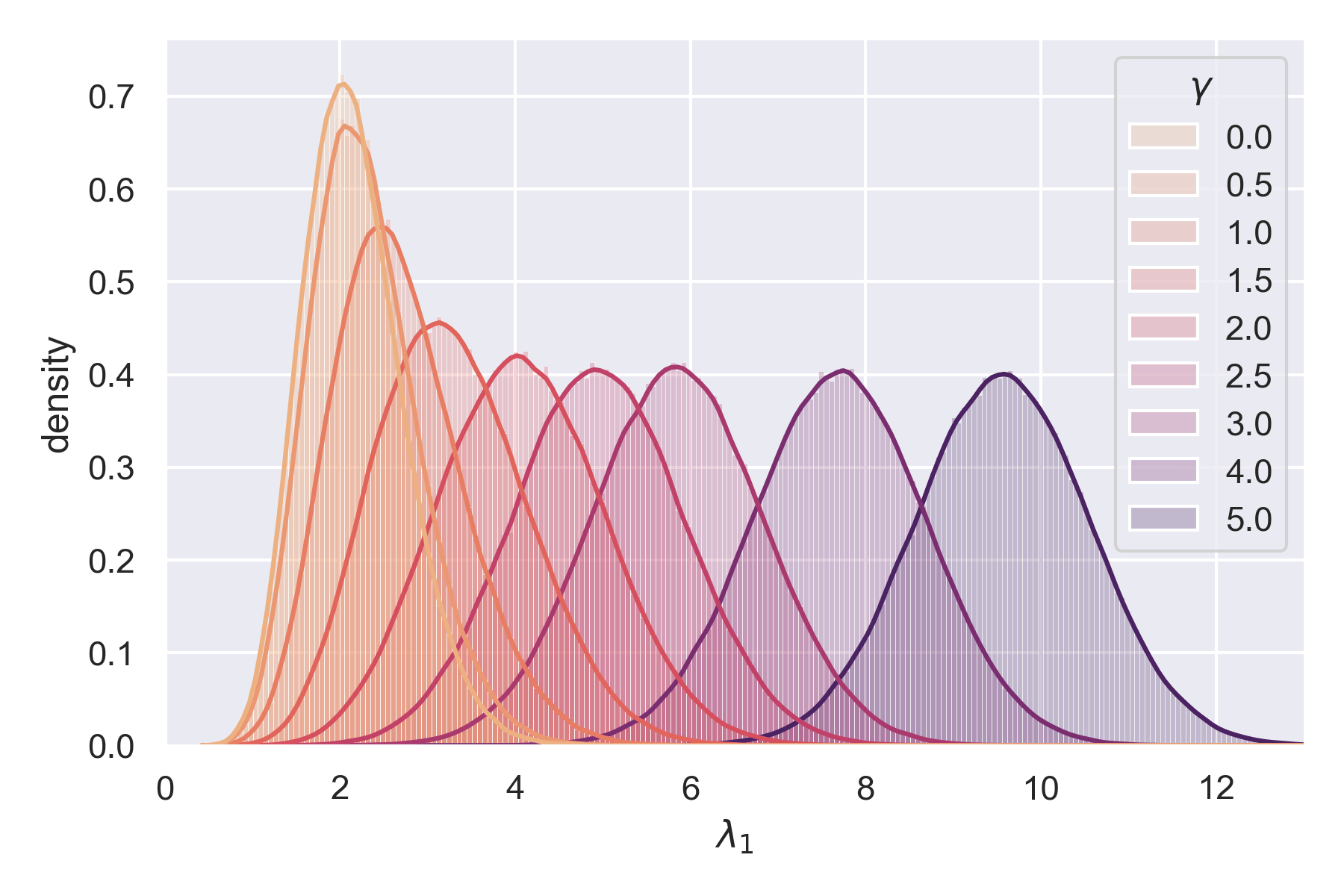}&
\includegraphics[height=\sizespec]{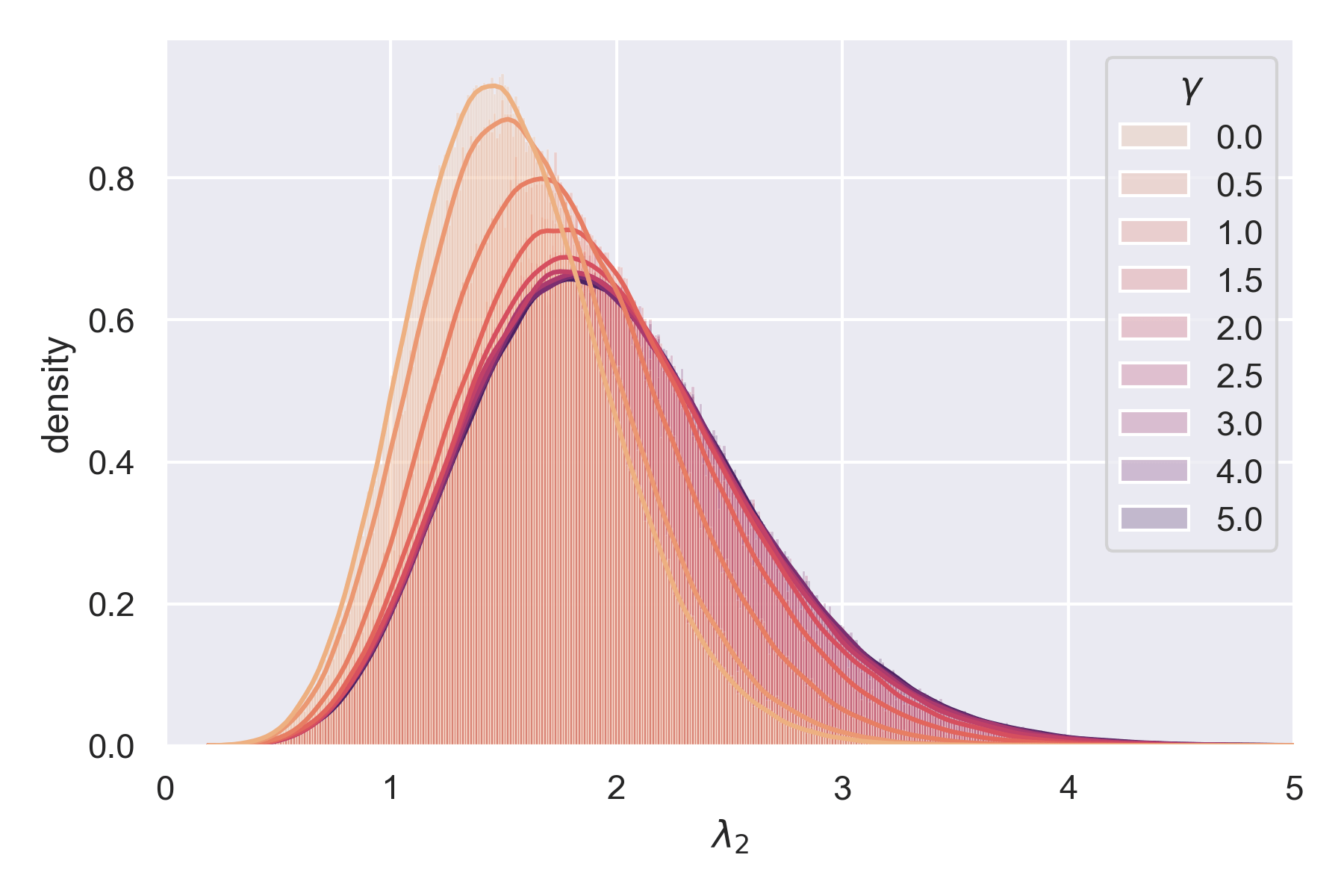}\\
\includegraphics[height=\sizespec]{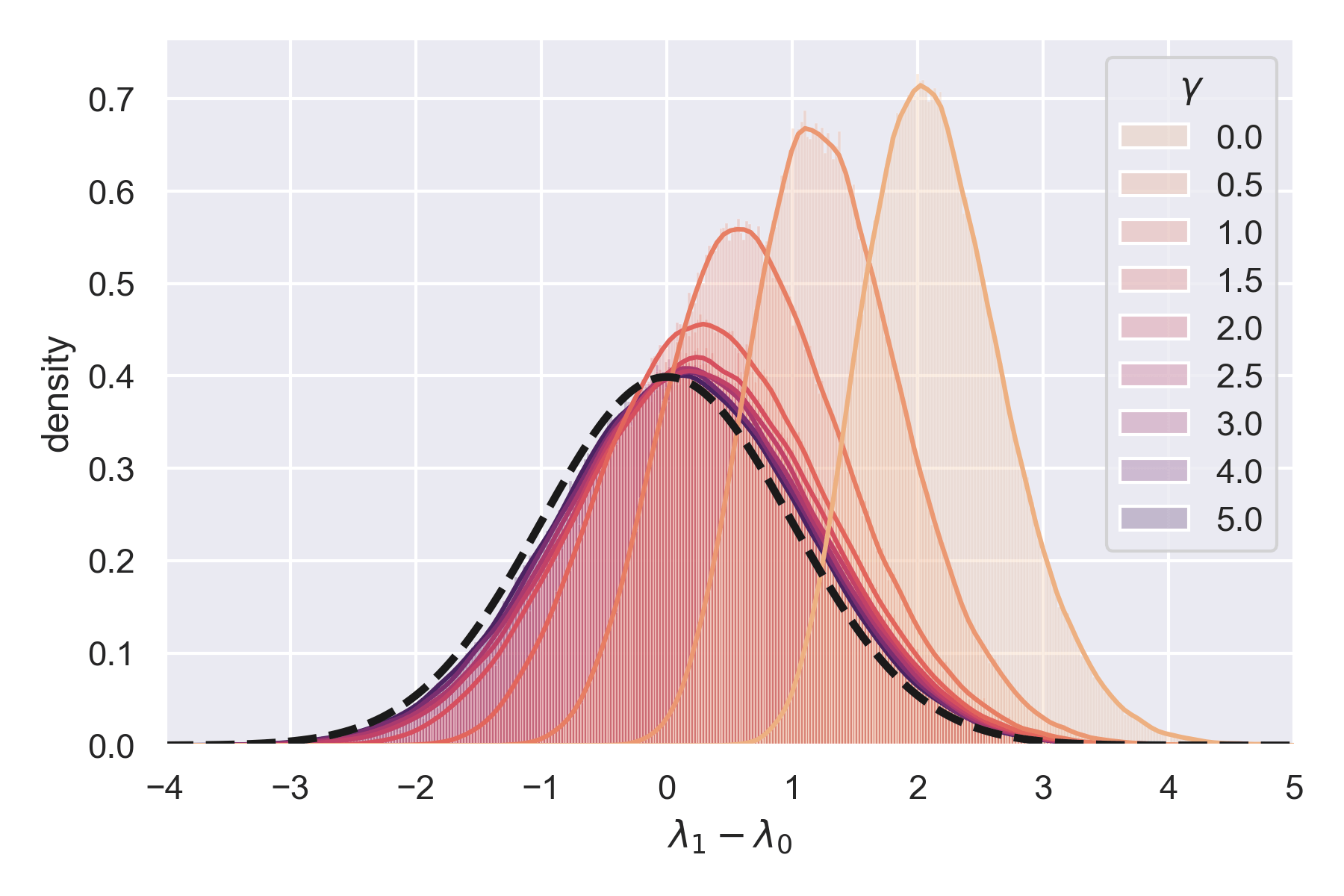}&
\includegraphics[height=\sizespec]{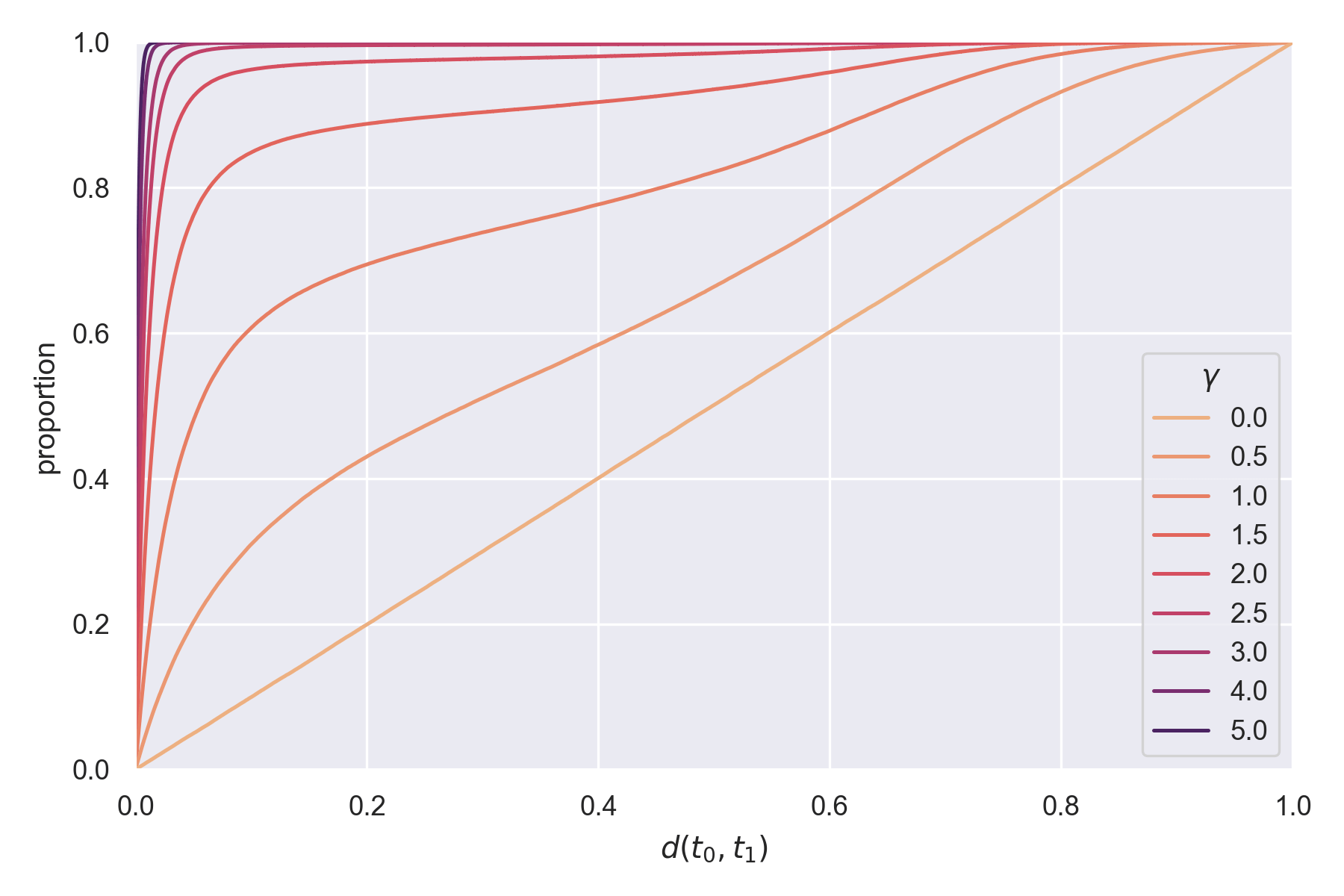}
\end{tabular}
\end{centering}
\caption{{\bf [Spiked tensor PCA, Section~\ref{sec:spike_tensor_model}, example 3/4]}
The PDFs of the maxima $\lambda_1,\lambda_2$ and the CDF of the distance $\mathrm{d}(t_0,t_1)$ over $250,000$ Monte-Carlo samples for each value of the parameter $\gamma$. The alternative is given by $t_0$ fixed and $\lambda_0=\gamma \times\sigma \sqrt{3\log 3+3\log\log 3}\simeq 0.684 \gamma\sigma$ where $\gamma=1$ corresponds to the so-called phase transition in Spiked tensor PCA as presented in \citet[Theorem 1.3]{perry2020statistical}. The distance~$\mathrm{d}(t_0,t_1)$ is normalized so that it is uniformly distributed on $(0,1)$ if $t_1$ is uniformly distributed on the sphere (e.g., $\gamma=0$).
}
\label{fig:lamddas}
\end{figure}

Indeed, as detailed in Section~\ref{sec:lars}, the maxima $\lambda_1$ and $\lambda_2$ correspond to \emph{the knots of the Continuous LARS algorithm and are intrinsic to continuous kernel sparse regression}. We illustrate this behavior in Figure~\ref{fig:lamddas} using the Spiked Tensor PCA model. In this experiment, the alternative is parameterized by~$\lambda_0$, where $\gamma$ scales the signal-to-noise ratio.\footnote{The signal is $\lambda_0=\gamma \times\sigma\sqrt{3\log 3+3\log\log 3}$. Here, $\gamma=1$ corresponds to the phase transition threshold derived in \citet[Theorem~1.3]{perry2020statistical}. Note that in their framework, the threshold is $\sqrt{2\log 3+2\log\log 3}$; we use a different normalization of the Gaussian noise, resulting in a statistical threshold of $\sigma\sqrt{d\log k+d\log\log k}$. Our Gaussian noise is presented in~\eqref{e:noise_tensor} and differs by $\sigma\sqrt{d/2}$ from theirs (see~\cite[Equation~10]{montanari2015limitation}), where~$\sigma$ is an unknown noise level.} As illustrated in the top panels, $\lambda_1$ stochastically increases with $\gamma$, while the distribution of $\lambda_2$ remains stable for moderate signal strengths (e.g., $\gamma \geq 2$). Consequently, the spacing $\lambda_1 - \lambda_2$ grows linearly with $\gamma$. In this moderate regime, the bottom panels confirm that the maximizer $t_1$ concentrates around the true parameter $t_0$ ($t_1 \simeq t_0$) and that $\lambda_1$ follows the distribution of $\sigma X(t_0)$ under $\mathds H_1(t_0,\lambda_0)$ (Gaussian with mean $\lambda_0$ and variance $\sigma^2$). This implies that $(t_1, \lambda_1)$ weakly converges to the distribution of $(t_0, \sigma X(t_0))$, allowing the ($t$-)spacing tests to successfully detect the alternative. We leave the formal proof of this specific phenomenon for future work.

\medskip

\noindent\textbf{Comparison with asymptotic phase transitions.} 
It is crucial to distinguish our contributions from well-established statistical limits in Spiked Tensor PCA. The existing literature, such as \citet[Theorem~1.3]{perry2020statistical}, proves that detecting a rank-one tensor below a specific eigenvalue threshold is impossible in the asymptotic regime where the dimension $d \to \infty$. In contrast, the ($t$-)spacing test proposed here provides an \textit{exact, non-asymptotic} inference procedure valid for any fixed dimension $d$ and sample size. While our numerical experiments (Figure~\ref{fig:lamddas}) recover the phenomenological behavior of this phase transition—where power drops to zero below the theoretical threshold—our procedure guarantees perfect calibration under the null hypothesis regardless of signal strength or dimension. Furthermore, unlike theoretical thresholds that typically require knowledge of the noise level $\sigma$, our $t$-spacing test \emph{remains exact even when the noise variance is unknown}. To the best of our knowledge, this is the first exact non-asymptotic test for the spiked tensor model with unknown $\sigma$. Finally, while Tensor PCA serves as a primary motivating example, our framework applies to the broader context of Gaussian random fields on Riemannian manifolds and general continuous sparse kernel regression. Future work may explore the precise links between our method and maximum likelihood detection thresholds (see Section~\ref{s:kernel}).

\subsection{A general framework related to continuous kernel regression}

\subsubsection{The four conditions on the Gaussian random field}
We start by giving the assumptions of the Gaussian random field $X(\cdot)$ that we will invoke along this paper. We consider a real valued Gaussian random field~$X(\cdot)$ defined on a $\mathcal C^2$ compact Riemannian manifold~${ M}$ of dimension~$d$ without boundary.  We recall that $c(\cdot,\cdot): { M}\times { M}\to \R$ denotes its variance-covariance function. 

\medskip

\noindent
{\bf Assumption $\bf (A_{1\text{-}4})$} We assume that 
\begin{align}
\label{c0}
\circ\ 
&\text{the \textit{paths of $X(\cdot)$ are $\mathcal C^2$ almost surely}};
\tag{$\bf A_1$}\\
\label{c1}
\circ\ &\E [X(t)] =0\text{ and }\Var[X(t)]=1\text{, for every }t\in { M};
\tag{$\bf A_2$}
\\
 \label{c2}
 \circ\ &\forall s\neq t\,,\ c(s,t)<1;
\tag{$\bf A_3$}\\
 \label{c3}
\circ\ &\text{for every }t\in { M} ,\text{ the gradient } \nabla X(t) \text{ has a non-degenerate } \tag{$\bf A_4$}\\
\notag &\text{Gaussian distribution}\,.
\end{align}
The variance-covariance matrix of the Gaussian tangent vector $ \nabla X(t) $ is denoted by $\Lambda_2(t)$, which is invertible by Assumption \eqref{c3}. 

\subsubsection{A continuous regression framework for the spacing test}
\label{s:kernel}
\subsubsection*{General framework}
We consider the following regression problem given by 
\begin{equation}
\label{eq:def_Y_regression}
	Y=\lambda_0\psi_{t_0}+\sigma W\,,
\end{equation}
where $\lambda_0\in\bbR$, $t_0\in\mathrm M$ are unknown parameters; $\sigma$ is the standard deviation; $(E,\langle\cdot,\cdot\rangle_E)$ is any Euclidean space; $\psi\,:\,t\in M\mapsto \psi_t\in E$; $W\in E$ is a centered Gaussian vector with variance-covariance matrix~$\mathrm{Id}_E$. The {\it feature map}~$\psi$ satisfies the following Assumptions $\bf (B_{1\text{-}6})$:
\begin{align}
\label{d1}
\circ\ 
&\psi\text{ is a $\mathcal C^{3}$-function};
\tag{$\bf B_1$}\\
 \label{d3}
 \circ\ &\forall t\in M\,,\ \|\psi_t\|_E^2=1;
\tag{$\bf B_2$}\\
 \label{d4}
 \circ\ &\forall t\in M\,,\ \forall s\in M\setminus\{t\}\,,\ \langle\psi_s,\psi_t\rangle_E<1;
\tag{$\bf B_3$}\\
 \label{d6}
 \circ\ &\forall t\in M\,,\ \text{$J\psi_t{J\psi_t}^\top$ has full rank $d$};
\tag{$\bf B_4$}\\
 \label{d5}
 \circ\ &\text{the span of $\{\psi_t\ :\ t\in M\}$ has dimension $m$ such that $m>d+1$};
\tag{$\bf B_5$}\\
\label{d2}
\circ\ &
\forall t\in M\,,\ \exists s\in M\ \text{s.t.}\ \psi_t=-\psi_s;
\tag{$\bf B_6$}
\end{align}
where $J\psi_t$ is the Jacobian matrix of $\psi_t$ and ${J\psi_t}^\top$ its transpose. In the next paragraph, we will see that $X(t)=\langle W,\psi_t\rangle_E$ and 
\[
\forall s,t\in M\,,\quad c(s,t)=\langle \psi_s,\psi_t \rangle_E\,.
\]
One can check that \eqref{d1} implies \eqref{c0}, \eqref{d3} is equivalent to \eqref{c1}, \eqref{d4} is equivalent to~\eqref{c2}, \eqref{d6} is equivalent to \eqref{c3}, and \eqref{d5} is equivalent to the forthcoming Assumption~\eqref{c4}. 

\begin{remark}[Spiked tensor PCA, Section~\ref{sec:spike_tensor_model}]
In Figures~\ref{fig:eigenvector}, \ref{fig:cdf_spacings} and \ref{fig:lamddas}, we considered, as an example, the detection of a rank one $3$-way symmetric tensor. We consider the sphere $\mathrm M= \mathds S^2$. The Euclidean space of $3$-way symmetric tensors of size $3\times3\times3$ is denoted by $(E,\langle\cdot,\cdot\rangle_E)$ with dimension $m=10$, and
\begin{align*}
\psi_t &:= t^{\otimes 3} =(t_i t_j t_k)_{1\leq i,j,k\leq 3\,,}\\
X(t) &:= \langle W,\psi_t\rangle_E=\sum_{ijk} t_i t_j t_k W_{ijk}\,,
\end{align*}
where $ W $ is an order $3$ symmetric tensor defined using symmetry and the following independent terms,
\begin{align*}
\circ\ &W_{iii} \text{ of variance } 1; \text{ there are } 3 \text{ principal diagonal terms}, i\in[3]; \\
\circ\ &W_{iij} \text{ of variance } 1/3;\text{ there are } 6 \text{ sub-diagonals terms}, 1\leq i\neq j\leq 3;\\
\circ\ &W_{123} \text{ of variance } 1/6;\text{ there is }  1 \text{ off-diagonal term}\,.
\end{align*}
\end{remark}

\subsubsection*{Maximum likelihood estimator (MLE)}
The Maximum Likelihood Estimator (MLE) of the couple $(\lambda_0,t_0)$ is given by 
\[
\argmin_{(\lambda,t)}\Big\{\frac12\big\|Y-\lambda\psi_t\big\|_E^2\Big\}\,.
\]
Minimization with respect to $\lambda$ is straightforward under Assumptions \eqref{d3} and \eqref{d2} leading~to 
\begin{equation}
\label{eq:Kernel_Regression_MLE}
\argmin_{t}\min_\lambda\Big\{\frac12\big\|Y-\lambda\psi_t\big\|_E^2\Big\}=\argmax_{t}\,\langle Y,\psi_t\rangle_E\,.
\end{equation}
Now consider the Gaussian random field $Z(t):=\langle Y,\psi_t\rangle_E$, referred to as the profile likelihood. The argument maximum (resp. the maximum) of $Z(\cdot)$ is exactly the MLE of $t_0$ (resp. of~$\lambda_0$) by~\eqref{eq:Kernel_Regression_MLE}, namely
\[
\lambda^{\mathrm{MLE}}=\max Z\quad\text{and}\quad t^{\mathrm{MLE}}=\argmax Z\,.
\]
By~\eqref{eq:def_Y_regression}, note that we uncover the decomposition~\eqref{e:Z_process} and the alternative hypothesis \eqref{eq:def_alternative} as
\begin{equation}
\label{def:observed_rf}
Z(t)=\langle Y,\psi_t\rangle_E 
= \lambda_0\langle \psi_{t_0},\psi_t\rangle_E+\sigma\langle W,\psi_t\rangle_E
= \lambda_0 c(t_0,\cdot) + \sigma X(t)
\,,
\end{equation}
where $X(t):=\langle W,\psi_t\rangle_E$ and $m(\cdot)=\lambda_0 c(t_0,\cdot) $.

\begin{remark}
Assumption~\eqref{d2} ensures that the profile likelihood is given by $Z(\cdot)$, see~\eqref{eq:Kernel_Regression_MLE}. 
\end{remark}

\subsubsection{Parallel with the LARS algorithm}
\label{sec:lars}

\subsubsection*{Discrete LARS}
The Least Angle Regression (LARS) algorithm has been introduced in \citet{efron2004least} and has been widely used in statistics for variable selection. Given an observation $Y\in\mathds R^n$ and $p$ covariates~$(\psi_t)_{t=1}^p$, the LARS algorithm is a forward stepwise variable selection algorithm giving a sequence~$(\lambda_i,t_i)$ of the so-called knots. The first knot $(\lambda_1,t_1)$ is the maximum and the argument maximum of the maximal absolute correlation between the observation and the covariates. Hence, the first step of the LARS aims at maximizing
\begin{subequations}
\begin{equation}
  \label{eq:lambda1_t1_discrete_LARS}
  \lambda_1=\max_{1\leq t\leq p} |\langle Y,\psi_t\rangle_{\mathds R^n}|
  \quad
  \text{and}
  \quad
  t_1=\arg\max_{1\leq t\leq p} |\langle Y,\psi_t\rangle_{\mathds R^n}|\,.
  \end{equation}
We define the so-called residual by, for all $\lambda\leq\lambda_1$, 
  \begin{equation}
    \forall t\in\{1,\ldots,p\}\,,\quad
    Z^{\lambda}(t):=\langle Y-(\lambda_1-{\lambda})\psi_{t_1},\psi_t\rangle_{\mathds R^n}\,,
  \end{equation}
The second knot is defined by 
  \begin{equation}
    \lambda_1-\lambda_2:=\inf\big\{\varepsilon>0\ :\ \exists t\neq t_1\text{ s.t. }Z^{\lambda_1-\varepsilon}(t)\geq\lambda_1-\varepsilon\big\}\,.
  \end{equation}
\end{subequations}
The second knot $(\lambda_2,t_2)$ is built so that $t_1$ is the unique argument maximum of $Z^{\lambda}(\cdot)$ for $\lambda_2<\lambda\leq\lambda_1$ and that there exists a second point, say $t_2\neq t_1$, such that $Z^{\lambda_2}(t_1)=Z^{\lambda_2}(t_2)=\lambda_2$. 

\subsubsection*{Continuous LARS}
The continuous LARS has been investigated in \citet{azais2020testing} for continuous sparse regression from Fourier measurements, see Section~\ref{sec:SR} for further details. We introduce the continuous LARS in a general setting as follows. The first knot $(\lambda_1,t_1)$ is the maximum and the argument maximum of the maximal absolute correlation between the observation $Y\in E$ and $\psi_t\in E$, the features. Hence, the first step of the LARS aims at maximizing 
\begin{subequations}
\begin{equation}
\label{eq:lambda1_regression}
\lambda_1=\max_{t\in { M}} |\langle Y,\psi_t\rangle_E|
\quad
\text{and}
\quad
t_1=\arg\max_{t\in { M}} |\langle Y,\psi_t\rangle_E|\,.
\end{equation}
Under \eqref{d2}, note that $\max_{t\in { M}} |Z(t)|=\max_{t\in { M}} Z(t)$,  where $Z(t)=\langle Y,\psi_t\rangle_E$. Therefore, the definition of~$\lambda_1$ given in \eqref{eq:lambda1_t1_intro} coincides with the aforementioned definition of $\lambda_1$. One can choose~$t_1$ being the unique argument maximum of $Z(\cdot)$, by \eqref{d1} and \eqref{d4} (see also \eqref{lem:tsirelson} in Lemma~\ref{lem:cov}). We define the so-called residual by, for all $\lambda\leq\lambda_1$,
\begin{equation}
Z^{\lambda}(t):=\langle Y-(\lambda_1-{\lambda})\psi_{t_1},\psi_t\rangle_E=Z(t)-(\lambda_1-{\lambda})c(t,t_1)\,,
\end{equation}
where $c(s,t)=\langle \psi_s,\psi_t\rangle_E$. Again, under~\eqref{d2}, note that $\max_{t\in { M}} |Z^\lambda(t)|=\max_{t\in { M}} Z^\lambda(t)$. Moreover, one can check that $Z^{\lambda}(t_1)=\lambda$. The second knot is defined by 
\begin{equation}
\lambda_1-\lambda_2:=\inf\big\{\varepsilon>0\ :\ \exists t\neq t_1\text{ s.t. }Z^{\lambda_1-\varepsilon}(t)\geq\lambda_1-\varepsilon\big\}\,.
\end{equation}
The second knot $(\lambda_2,t_2)$ is built so that $t_1$ is the unique argument maximum of $Z^{\lambda}(\cdot)$ for $\lambda_2<\lambda\leq\lambda_1$ and that there exists a second point, say $t_2\neq t_1$, such that $Z^{\lambda_2}(t_1)=Z^{\lambda_2}(t_2)=\lambda_2$. Now observe that, for all $t\neq t_1\in { M}$ and all $\lambda$,
\begin{align*}
Z^{\lambda}(t)\leq\lambda 
\ &\Leftrightarrow\ 
Z(t) - \lambda_1 c(t,t_1)\leq \lambda(1- c(t,t_1))
\\ &\Leftrightarrow\ 
\frac{Z(t) - Z(t_1) c(t,t_1)}{1- c(t,t_1)}\leq\lambda
\\ &\Leftrightarrow\ 
Z^{|t_1}(t)\leq \lambda\,,
\end{align*}
where $Z^{|t_1}(t)$ is defined as in \eqref{e:Xbarra_intro},
taking into account that the gradient is vanishing at point $t_1$. We uncover that $(\lambda_2,t_2)$ is the second knot of the LARS where 
\begin{equation}
\label{eq:lambda2_regression}
\lambda_2=\max_{t\in { M}\setminus\{t_1\}} Z^{|t_1}(t)
\quad
\text{and}
\quad
t_2=\max_{t\in { M}\setminus\{t_1\}} Z^{|t_1}(t)\,.
\end{equation}
\end{subequations}
Note that the definition of~$\lambda_2$ in \eqref{e:Xbarra_intro} is equivalent to the aforementioned definition of $\lambda_2$ (this is also true for the argument maximum). This paper gives the distribution of $\lambda_1$ (first knot of the continuous LARS) conditional on $(\lambda_2,\Omega)$ (second knot of the continuous LARS and independent part of the Hessian at the first knot) under the null hypothesis.

\subsection{Contributions and outline}

\subsubsection*{Main contributions.} We investigate the \emph{second maximum} $\lambda_2$ of a $\mathcal C^2$ centered Gaussian random field on a compact manifold $M$, a quantity naturally arising along the continuous LARS path of sparse kernel regression. Classical inference relying only on the global maximum $\lambda_1$ typically uses tail or Euler characteristic approximations; here, by conditioning on the richer object $(\lambda_2,\Omega)$---where $\Omega$ denotes the independent part of the Hessian at the (random) maximiser $t_1$---we obtain \emph{exact, non-asymptotic} distributional identities for $\lambda_1$ and devise spacing tests. Our main results are:
\begin{itemize}
  \item[(C1)] \textbf{Geometric characterisation of $\lambda_2$.} Definition of $\lambda_2$ as the maximum of the singular conditional field $X^{|t_1}(\cdot)$ with a removable first-order structure, and study of its basic properties (Section~\ref{s:second}).
  \item[(C2)] \textbf{Helix / eigen-structure link.} A correspondence (Lemma~\ref{lem:helix}) between the eigen-decomposition of $\Omega$ and directional limiting values of the rescaled field near $t_1$, clarifying local geometry around the global maximum.
  \item[(C3)] \textbf{Singular Kac--Rice formula.} An ad-hoc Kac--Rice formula adapted to $X^{|t_1}(\cdot)$ yielding the explicit conditional density of $\lambda_1$ given $(\lambda_2,\Omega)$ (Theorem~\ref{thm:joint_law}, Appendix~\ref{app:weights}).
  \item[(C4)] \textbf{Exact spacing test (known variance).} A finite-sample valid test based on the spacing $\lambda_1-\lambda_2$ (Theorem~\ref{thm:rice_known_variance}).
  \item[(C5)] \textbf{Variance estimation.} Construction of a Karhunen--Lo\`eve based estimator $\hat\sigma$ for incorporation into the Kac--Rice framework (Section~\ref{sec:tst}).
  \item[(C6)] \textbf{Studentised spacing test.} Exact inference using the studentised spacing $\lambda_1/\hat\sigma-\lambda_2/\hat\sigma$ without prior knowledge of~$\sigma^2$ (Theorem~\ref{thm:rice_unknown_variance}, Proposition~\ref{prop:Rice_density_unknown_variance}).
  \item[(C7)] \textbf{Empirical validation and applications.} Simulations confirm calibration and improved power for sparse detection (\url{https://github.com/ydecastro/tensor-spacing/}); links with continuous sparse kernel regression and super-resolution are detailed in Section~\ref{sec:examples}.
\end{itemize}
Conceptually, the work extends Kac--Rice formula to a controlled singular setting and shows that conditioning on $(\lambda_2,\Omega)$ unlocks exact finite-sample inference for $\lambda_1$. Statistically, spacing-based tests can be studentized leading to exact inference without prior knowledge of $\sigma^2$. Geometrically, the helix interpretation ties curvature encoded in $\Omega$ to directional approach of the conditional~field.


\subsubsection*{Outline.} Section~\ref{s:second} introduces $\lambda_2$ and the conditional singular field $X^{|t_1}(\cdot)$. Section~\ref{sec:KRF} presents the singular Kac--Rice formula and Theorem~\ref{thm:joint_law}. Section~\ref{sec:test} develops spacing-based tests for known variance. Section~\ref{sec:tst} constructs $\hat\sigma$ and derives the plug-in law leading to the unknown-variance test. Section~\ref{sec:examples} details connections with continuous sparse kernel regression and super-resolution. Appendices collect technical lemmas, weight computations and further numerical illustrations. A table of list of notation is given in Appendix~\ref{app:notation}.

\subsection{Related works}
The first paper to consider the Kac-Rice formula on manifolds, specifically the sphere and the Stiefel Manifold, is \cite{AW05onthedistribution}. This theory has been comprehensively developed in the monographs by \citet{adler2009random} and  \citet{Azais_Wschebor_09}.  In high-dimensional settings, complexity results for random smooth functions on the sphere were derived by \citet{auffinger2013complexity}. A new set of hypotheses and proofs for the Kac-Rice formula, particularly for the measure of level sets, is given in \citet{armentano2023general}. 

The Kac-Rice formula is also central to \citet{azais2017rice} for studying the maximum of Gaussian fields on the torus, with applications to the Super-Resolution problem. This falls under the general theory of continuous sparse regression over the space of measures, which has attracted significant attention in signal processing \citep{candes2014towards,duval2015exact,azais2015spike}, machine learning  \citep{de2021supermix}, and optimization \citep{chizat2022sparse}. The super-resolution framework aims to recover fine-scale details from low-frequency measurements and has applications in astronomy, medical imaging, and microscopy. The novel aspects of this body of work rely on new statistical and optimization guarantees for sparse regression. Initiated by the work presented in  \citep{azais2020testing}, we investigate the possibility of detecting a sparse object from a test on the mean~$m(\cdot)$ of a Gaussian random field under a sparsity assumption. 


\section{The second maximum of a Gaussian random field} 
\label{s:second}

\subsection{The two Gaussian regression remainders}
\label{sec:regression_remainders}

Some useful properties are presented in the next lemma. 
\begin{subequations}
\begin{lemma}
\label{lem:cov}
Under Assumption $\bf (A_{1\text{-}4})$, one has 
\begin{align}
\label{eq:covariance}
  \circ\ & c(s,t) = \E[X(s)\, X(t)]\text{ and } c(t,t)=1;\\
\label{e:ind_rf_grad}
  \circ\ &X(t) 
\text{ and }
\nabla X(t)
\text{ are independent;} \\
\label{d:lambda2-tilde}
   \circ\ &\Lambda_2(t) := \Var[\nabla X(t)]  = -\Cov [\nabla^2X(t),X(t)]\\
\label{lem:tsirelson}
  \circ\ &\text{the argument maximum of }X(\cdot)\text{ is unique.}
\end{align}
\end{lemma}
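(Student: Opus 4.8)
The plan is to establish the four claims of Lemma~\ref{lem:cov} in turn, each following from the stationarity-in-variance assumption \eqref{c1} by differentiation, together with the non-degeneracy and smoothness hypotheses.

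\medskip

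\noindent\textbf{Claim \eqref{eq:covariance}.} This is essentially a restatement of the standing assumptions. Since $X(\cdot)$ is centered by \eqref{c1}, we have $c(s,t)=\Cov[X(s),X(t)]=\E[X(s)X(t)]$ by definition of the covariance function, and $c(t,t)=\Var[X(t)]=1$, again by \eqref{c1}. Nothing further is needed here.

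\medskip

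\noindent\textbf{Claim \eqref{e:ind_rf_grad}.} The strategy is to differentiate the identity $\Var[X(t)]=1$, which holds for every $t\in M$. First I would justify exchanging differentiation and expectation: by \eqref{c0} the paths are $\mathcal C^2$ almost surely, so one may differentiate the covariance $c(s,t)=\E[X(s)X(t)]$ under the expectation sign to obtain the covariances involving $\nabla X(t)$. Differentiating $c(t,t)\equiv 1$ with respect to $t$ (using the chain rule on the diagonal, i.e. $\tfrac{\partial}{\partial t_i}c(t,t) = \partial_{s_i}c(s,t)|_{s=t} + \partial_{t_i}c(s,t)|_{s=t} = 2\,\partial_{s_i}c(s,t)|_{s=t}$ by symmetry of $c$) gives $\partial_{s_i}c(s,t)|_{s=t}=0$ for all $i$. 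But $\partial_{s_i}c(s,t)|_{s=t}=\E[\partial_{s_i}X(s)|_{s=t}\,X(t)] = \Cov[(\nabla X(t))_i,\,X(t)]$. Hence $\Cov[\nabla X(t),X(t)]=0$. Since $(X(t),\nabla X(t))$ is a jointly Gaussian vector, vanishing covariance is equivalent to independence, which gives \eqref{e:ind_rf_grad}. The one subtlety is phrasing the diagonal differentiation intrinsically on the Riemannian manifold rather than in coordinates; I would work in a normal coordinate chart at $t$ (where the Levi-Civita connection's Christoffel symbols vanish at the center point), so that the Riemannian gradient coincides with the ordinary coordinate gradient there and the computation above is literally valid.

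\medskip

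\noindent\textbf{Claim \eqref{d:lambda2-tilde}.} The idea is to differentiate the previously obtained relation $\Cov[\nabla X(t),X(t)]=0$ once more. Working again in normal coordinates at $t$, differentiate the scalar identity $\E[\partial_{s_i}X(s)|_{s=t}\,X(t)]=0$ with respect to $t_j$. Applying the product rule and the chain rule on the diagonal yields a sum of two terms, one giving $\Cov[(\nabla^2 X(t))_{ij},X(t)]$ (the second derivative of the field against the field) and the other giving $\Cov[(\nabla X(t))_i,(\nabla X(t))_j] = (\Lambda_2(t))_{ij}$. Setting the total to zero rearranges to $\Lambda_2(t)=-\Cov[\nabla^2 X(t),X(t)]$, which is \eqref{d:lambda2-tilde}. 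Here the care needed is that on a curved manifold the Riemannian Hessian differs from the coordinate Hessian by a Christoffel-symbol correction term; choosing normal coordinates centered at $t$ makes these correction terms vanish \emph{at} $t$, so the coordinate computation reproduces the intrinsic $\nabla^2 X(t)$ exactly at the evaluation point.

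\medskip

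\noindent\textbf{Claim \eqref{lem:tsirelson}.} This is a standard almost-sure uniqueness statement for the argmax of a smooth Gaussian field and I expect it to be the main obstacle, since it is of a different (probabilistic, rather than purely differential) nature than the first three. The plan is to invoke a Tsirelson-type / Bulinskaya-type argument: under \eqref{c0} the field is $\mathcal C^2$, and the key non-degeneracy input is that for any fixed pair $s\neq t$ the random variable $X(s)-X(t)$ has strictly positive variance, which follows from \eqref{c2} (since $\Var[X(s)-X(t)]=2(1-c(s,t))>0$). One then shows that the event ``$X$ attains its maximum at two distinct points'' has probability zero: cover $M\times M$ minus a neighborhood of the diagonal by countably many product boxes, on each of which a Gaussian anti-concentration / Tsirelson-Ibragimov-Sudakov argument bounds the probability that the two coordinate maxima coincide, and handle the diagonal separately using the non-degeneracy of $\nabla X(t)$ from \eqref{c3} (a nondegenerate gradient forbids a plateau, so ties cannot occur infinitesimally close together). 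The delicate point is the rigorous covering argument and the anti-concentration estimate near the diagonal; I would either cite the established result (e.g. the uniqueness lemma in \citet{azais2017rice} or the general theory in \citet{Azais_Wschebor_09}) or reproduce its short proof via the fact that the density of $\max X$ is the relevant object and that non-degenerate smooth Gaussian fields have almost surely a unique global maximizer.
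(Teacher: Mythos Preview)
Your proposal is correct and follows essentially the same route as the paper: claims \eqref{eq:covariance}--\eqref{d:lambda2-tilde} are obtained by successively differentiating the identity $c(t,t)\equiv 1$ (the paper states this in one line, you spell out the diagonal chain-rule computation in normal coordinates), and claim \eqref{lem:tsirelson} is handled by citation---the paper invokes Tsirelson's theorem directly via \citet[Theorem~3]{lifshits1983absolute}, so your covering/Bulinskaya sketch is more than is needed (and the appeal to \eqref{c3} near the diagonal is superfluous: Tsirelson's result only requires $\Var[X(s)-X(t)]>0$ for $s\neq t$, which is exactly \eqref{c2}).
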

\end{subequations}
\begin{proof}
The first property \eqref{eq:covariance} is equivalent to \eqref{c1}. The other properties can be deduced by differentiating $c(t,t)=1$. The last statement is a consequence of Tsirelson's theorem, see for instance \citet[Theorem~3]{lifshits1983absolute}. 
\end{proof}


\subsubsection*{The Gaussian random field $X^{|t}(\cdot)$}
We know that $X(t)$ and~$\nabla X(t)$ are independent by \eqref{e:ind_rf_grad}. For a fixed point $t\in { M}$, consider the remainder of Gaussian regression of $X(s)$ with respect to $(X(t),\nabla X(t))$ given by the Gaussian random field 
\[
s\in { M}\mapsto X(s)
-c(s,t)\,X(t)
-\nabla_t c(s,t)^\top\Lambda_2^{-1}(t)\nabla X(t)\in\bbR\,,
\] 
where $\nabla_t c(s,t)$ is the Riemannian gradient of $t\mapsto c(s,t)$. By \eqref{c3}, remark that~$\Lambda_2(t)$ is invertible. This Gaussian random field is well defined on ${ M}$ and independent of $(X(t),\nabla X(t))$. Now, for $s\neq t$, set
\begin{equation} 
\label{e:Xbarra}
 X^{|t}(s) := \frac{X(s)-c(s,t)\,X(t)-\nabla_t c(s,t)^\top\Lambda_2^{-1}(t)\nabla X(t)}{1-c(s,t)},
\end{equation}
that is, $X^{|t}(s)$ is a normalisation of the remainder of the regression of $X(s)$ w.r.t. $(X(t),\nabla X(t))$.  

\subsubsection*{The regression of the Hessian $R(t)$}
Now, consider the following regression in the space of Gaussian symmetric matrices
\begin{subequations}
\begin{equation} \label{d:Rtecho}
 \nabla^2X(t) = -\Lambda_2(t)\, X(t) -\Lambda_3 (t) \nabla X(t) + \tilde R(t),
\end{equation}
for some well-defined $3$-way tensor $\Lambda_3(t)$. It will not be necessary to give the explicit expression of $\Lambda_3(t)$ for our purposes, this tensor is well defined by Gaussian regression formulas. Thus, one can identify the symmetric matrix $\tilde R(t)$ as the remainder of the regression of $\nabla^2X(t)$ on $(X(t),\nabla X(t))$.  For future use, it is convenient to set 
\begin{equation} 
	\label{d:R}
  	R(t):=  \Lambda^{-\frac12}_2(t)\,\tilde R(t)\,\Lambda^{-\frac12}_2(t)\,.
\end{equation} 
Note that $ \tilde R(t)$ and $R(t)$ are symmetric. The following lemma is straightforward.
\end{subequations}
\begin{lemma}
\label{lem:independent}
For any $t\in { M}$, the Gaussian random field $X^{|t}(\cdot)$ and the Gaussian random matrix~$R(t)$ are independent of $(X(t),\nabla X(t))$. 
\end{lemma}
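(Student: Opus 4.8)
The plan is to show that all three random objects---the field $X^{|t}(\cdot)$, the matrix $\tilde R(t)$ (hence $R(t)$), and the Hessian regression remainder---are jointly obtained as residuals of Gaussian regressions onto the same conditioning variables $(X(t),\nabla X(t))$, and that a Gaussian regression residual is \emph{always} independent of the regressors. Since everything in sight is jointly Gaussian (being linear functionals, or limits of linear functionals, of the underlying Gaussian field $X(\cdot)$ and its derivatives), independence reduces to checking that the relevant covariances vanish, which is exactly what the regression construction guarantees by design.

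Concretely, I would first fix $t\in M$ and record that the vector $(X(t),\nabla X(t))$ is jointly Gaussian by \eqref{c0} and \eqref{c3}, with $X(t)\indep\nabla X(t)$ by \eqref{e:ind_rf_grad}. For a fixed $s\neq t$, the numerator in \eqref{e:Xbarra} is by construction the residual of the orthogonal (i.e.\ $L^2$) projection of $X(s)$ onto the span of $\{X(t),\nabla X(t)\}$: indeed $\E[X(s)X(t)]=c(s,t)$ and $\E[X(s)\nabla X(t)]=\nabla_t c(s,t)$, so subtracting $c(s,t)X(t)+\nabla_t c(s,t)^\top\Lambda_2^{-1}(t)\nabla X(t)$ leaves a centered Gaussian variable orthogonal to both $X(t)$ and each coordinate of $\nabla X(t)$. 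For jointly Gaussian variables orthogonality is independence, so the numerator---and thus $X^{|t}(s)$, which is merely the deterministic rescaling by $1/(1-c(s,t))$ (well defined and nonzero by \eqref{c2})---is independent of $(X(t),\nabla X(t))$ for each fixed $s$. The same argument applies verbatim to $\tilde R(t)$: the regression \eqref{d:Rtecho} defines $\tilde R(t)$ as the residual of $\nabla^2 X(t)$ projected onto $(X(t),\nabla X(t))$, so each entry of $\tilde R(t)$ is orthogonal to, hence independent of, the conditioning pair; multiplying on both sides by the deterministic matrix $\Lambda_2^{-1/2}(t)$ as in \eqref{d:R} preserves this independence, giving the claim for $R(t)$.

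The only genuinely nontrivial point is upgrading the pointwise-in-$s$ independence of $X^{|t}(s)$ to \emph{joint} independence of the whole random field $\{X^{|t}(s)\}_{s\in M}$ (together with $R(t)$) from $(X(t),\nabla X(t))$. The plan is to invoke the standard fact that for a Gaussian process, a collection of residuals all orthogonal to a fixed finite-dimensional conditioning space is jointly independent of that space: the joint law of $\big((X^{|t}(s))_{s},\,R(t)\big)$ and $(X(t),\nabla X(t))$ is Gaussian, and all cross-covariances vanish by the computation above, so the two blocks are independent as processes, not just marginally. Because $M$ is separable and the paths are continuous by \eqref{c0}, the finite-dimensional independence extends to the full field, and taking $s\neq t$ throughout avoids the singularity of the normalization at $s=t$.

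The main obstacle---and the reason the lemma is called ``straightforward''---is essentially bookkeeping rather than depth: one must be careful that the objects are honestly jointly Gaussian (which requires $\mathcal C^2$ paths so that $\nabla X(t)$ and $\nabla^2 X(t)$ are well-defined Gaussian limits of difference quotients of $X$), and that $\Lambda_2(t)$ is invertible so that the projection coefficients in \eqref{e:Xbarra} and \eqref{d:Rtecho} exist; both are supplied by Assumptions \eqref{c0} and \eqref{c3}. Once joint Gaussianity and the vanishing of the cross-covariances are in hand, independence is immediate, so I would present the proof as a short verification of these two facts followed by the ``orthogonality equals independence'' principle for Gaussian vectors.
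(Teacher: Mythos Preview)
Your proposal is correct and is precisely the standard regression-residual argument the paper has in mind; the paper itself offers no proof beyond declaring the lemma ``straightforward.'' Your write-up simply spells out the implicit reasoning---joint Gaussianity, vanishing cross-covariances by construction of the regression remainders, and hence independence---with the appropriate care about extending pointwise independence to the full field.
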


\subsection{Second maximum and independent part of the Hessian}
\noindent
{\bf First maximum:} We define the first maximum $\lambda_1$ of $\sigma X(\cdot)$ and its argument maximum $ t_1$ by 
\begin{equation}
\label{def:hatz}
\lambda_1 := \sigma X({t_1}) \quad\mathrm{and }\quad {t_1}:= \argmax_{t\in { M}} X(t).
\end{equation}
The argument maximum is almost surely a singleton by \eqref{lem:tsirelson}, hence $ t_1$ is unique almost surely. 

\medskip

\noindent
{\bf Second maximum:} We define the second maximum $\lambda_2$ of $\sigma X(\cdot)$ by
\begin{equation} 
\label{eq:lambdaT}
\forall t\in { M},\quad
\lambda_2^t:=\sup_{s\in { M}\setminus\{ t\}}\{\sigma X^{|t}(s)\}\quad \mathrm{and} \quad \lambda_2 := \lambda_2^{ {t_1}}\,.
\end{equation}

\medskip

\noindent
{\bf Independent part of the Hessian:} We define the independent part of the Hessian $\Omega$ as 
\eq
\label{def:R}
\Omega:= \sigma R({{t_1}})= \sigma \Lambda^{-\frac12}_2({{t_1}})\,\tilde R({{t_1}})\,\Lambda^{-\frac12}_2({{t_1}})\,.
\qe


\noindent
At this stage, it is not clear that $\lambda_2^t<\infty$ a.s. and how $X^{|t}(s)$ is shaped around point $t$. The next lemma gives a description of $s\mapsto X^{|t}(s)$ around point $t$ which proves that $\lambda_2^t<\infty$ almost surely. A proof of this lemma can be found in Appendix~\ref{a:helix}. Note that, since ${ M}$  is compact, by the Hopf–Rinow theorem, ${ M}$ is geodesically complete and the exponential map exists on the whole tangent space.

\begin{subequations}
\begin{lemma}
\label{lem:helix}
Let $h$ be a nonzero vector of the tangent space at $t$. For all $\varepsilon\neq 0$, let $s(\varepsilon):=\exp_t(\varepsilon h)\in { M}$ be the exponential map at $t$ given by the tangent vector $\varepsilon h$. Then, under \eqref{c0}, \eqref{c1} and~\eqref{c3}, 
\begin{equation}
\label{e:helix}
\lim_{\substack{\varepsilon\neq 0\\ \varepsilon\to0}}\, X^{|t}(s(\varepsilon))
=\frac{h^\top\tilde R(t)h}{h^\top\Lambda_2(t)h}
=\frac{h^\top}{\|h\|_2}\,R(t)\,\frac{h}{\|h\|_2}\,,\text{ almost surely}
\,.
\end{equation}
Furthermore, there exists a unit norm tangent vector $h_0$ at point $t$ such that 
\[
\limsup_{s\to t}\,  X^{|t}(s)
=h_0^\top{R(t)}\,h_0
= {\lambda_{\max}(R(t))}<\infty\,,\text{ almost surely}\,.
\]
\end{lemma}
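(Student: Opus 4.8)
The goal is to understand the behaviour of $X^{|t}(s)$ as $s\to t$ along a geodesic. Fix a unit tangent vector $h$ and set $s(\varepsilon)=\exp_t(\varepsilon h)$. The idea is to Taylor-expand both the numerator and the denominator of $X^{|t}(s(\varepsilon))$ as defined in \eqref{e:Xbarra} and take the ratio of leading-order terms. First I would expand the denominator $1-c(s(\varepsilon),t)$. Since $\E[X(s)X(t)]=c(s,t)$ and $c(t,t)=1$, differentiating twice in $\varepsilon$ and using that the first-order term $\nabla_t c(s,t)^\top h$ vanishes at $\varepsilon=0$ (because $X(t)$ and $\nabla X(t)$ are independent, so $c$ has a critical point on the diagonal), I expect
\[
1-c(s(\varepsilon),t)=\tfrac12\,\varepsilon^2\,h^\top\Lambda_2(t)h+o(\varepsilon^2)\,,
\]
using \eqref{d:lambda2-tilde}, which identifies $\Lambda_2(t)=\Var[\nabla X(t)]$ with the relevant second-derivative term of $c$.

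**The numerator expansion.** Next I would expand the numerator $N(\varepsilon):=X(s(\varepsilon))-c(s(\varepsilon),t)X(t)-\nabla_t c(s(\varepsilon),t)^\top\Lambda_2^{-1}(t)\nabla X(t)$ to second order in $\varepsilon$, using \eqref{c0} so that $X$ has $\mathcal C^2$ paths almost surely and the Taylor formula $X(s(\varepsilon))=X(t)+\varepsilon\,h^\top\nabla X(t)+\tfrac12\varepsilon^2\,h^\top\nabla^2X(t)h+o(\varepsilon^2)$ holds along the geodesic. The constant and first-order terms should cancel against the corresponding terms of $c(s(\varepsilon),t)X(t)$ and $\nabla_t c(s(\varepsilon),t)^\top\Lambda_2^{-1}(t)\nabla X(t)$, precisely because these are the regression predictors. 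Collecting the $\varepsilon^2$ terms and invoking the Hessian regression decomposition \eqref{d:Rtecho}, namely $\nabla^2X(t)=-\Lambda_2(t)X(t)-\Lambda_3(t)\nabla X(t)+\tilde R(t)$, I expect the $X(t)$ and $\nabla X(t)$ contributions to be absorbed, leaving $N(\varepsilon)=\tfrac12\varepsilon^2\,h^\top\tilde R(t)h+o(\varepsilon^2)$ almost surely. Taking the ratio then gives the first displayed limit $h^\top\tilde R(t)h/h^\top\Lambda_2(t)h$, and the second equality follows by writing $\tilde R(t)=\Lambda_2^{1/2}(t)R(t)\Lambda_2^{1/2}(t)$ from \eqref{d:R} and substituting $h/\|h\|_2$.

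**The supremum over directions.** For the final claim, the directional limit equals $\hat h^\top R(t)\hat h$ where $\hat h=h/\|h\|_2$ ranges over the unit sphere of the tangent space. Since $R(t)$ is symmetric, the Rayleigh quotient characterization gives $\sup_{\hat h}\hat h^\top R(t)\hat h=\lambda_{\max}(R(t))$, attained at a unit eigenvector $h_0$ associated with the top eigenvalue. I would then argue that $\limsup_{s\to t}X^{|t}(s)$ equals this supremum over directional limits: the upper bound follows because every approach direction gives a directional limit bounded by $\lambda_{\max}(R(t))$ uniformly (a uniformity I would establish by making the $o(\varepsilon^2)$ remainders uniform over the compact unit sphere of directions, using the $\mathcal C^2$ regularity from \eqref{c0}), and the lower bound is realized by approaching along $h_0$. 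Finiteness is then immediate since $R(t)$ is almost surely a finite symmetric matrix by \eqref{c3}.

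**Main obstacle.** The delicate point is the uniformity of the second-order Taylor remainder over all approach directions, which is what upgrades the pointwise directional limits into a genuine $\limsup$ statement and simultaneously guarantees $\lambda_2^t<\infty$ almost surely. Pointwise along a fixed geodesic the expansion is routine, but controlling the remainder uniformly near the singularity at $t$ — where the denominator $1-c(s,t)$ vanishes quadratically — requires care, since one must ensure the competition between the vanishing denominator and the vanishing numerator is controlled simultaneously in all directions. This is where the compactness of the unit tangent sphere and the almost-sure $\mathcal C^2$ regularity of the paths do the essential work.
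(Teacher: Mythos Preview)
Your proposal is correct and follows essentially the same route as the paper: Taylor-expand numerator and denominator of $X^{|t}(s(\varepsilon))$ to second order, identify the leading terms via the Hessian regression \eqref{d:Rtecho}, and use compactness of the unit tangent sphere for the $\limsup$. The paper's execution differs only in two minor technical choices: (i) rather than cancelling terms explicitly, it applies the $L^2$-projection $\Pi$ onto the orthogonal complement of $\operatorname{span}(X(t),\nabla X(t))$, which kills the $X(t)$ and $\nabla X(t)$ contributions in one stroke and leaves $\tilde R$ directly; (ii) for the $\limsup$ upper bound it extracts a convergent subsequence of directions $h_n\to\bar h$ from an approaching sequence $s_n=\exp_t(\varepsilon_n h_n)$ (using Lagrange-form remainders and continuity of $\tilde R$) instead of establishing uniform $o(\varepsilon^2)$ remainders --- both arguments rest on the same compactness and $\mathcal C^2$ regularity, so this is a matter of taste.
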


\begin{remark}
\label{rem:helix}
The aforementioned lemma shows that $\lambda_2^t$ varies in $(-\infty ,\infty)$ almost surely. Also, it shows that~$X^{|t}(\cdot)$ is a {\tt helix} random field \citep[Lemma~4.1]{AW05onthedistribution} with pole $t$: the paths of the random field need not extend to a continuous function at the point $t$; however, the paths have radial limits at $t$ and the random field may take the form of a helix around $t$.
The shape of the helix locally around the singularity is described by the eigen-decomposition of the independent part of the Hessian, as shown by \eqref{e:helix}
\end{remark}

\noindent
Besides, for every $t$ such that $\nabla X(t)=0$ it follows that 
\begin{equation}
\label{e:barsansbar}
X^{|t}(s) 
= \frac{X(s)-c(s,t)X(t)}{1-c(s,t)} 
=: X^{t}(s).
\end{equation}
In particular, we have the following identity between random fields $X^{|{t_1}}(\cdot)=X^{{t_1}}(\cdot)$, which may not be Gaussian (due to the random point ${t_1}$). The following lemma is straightforward. 
\end{subequations}

\begin{lemma}
\label{lem:selection_event}
For a fixed $t\in { M}$, consider the following indicator functions:
\begin{itemize}
\item $\imath_1:=\1\big\{ t={t_1}\big\}$;
\item $\imath_2:=\1\big\{ \forall s\in { M}\setminus\{t\}\,,\  X(s)\leq X(t)\big\}$;
\item $\imath_3:=\1\big \{\forall s\in { M}\setminus\{t\}\,,\  X^{t}(s)\leq X(s)\big\}$;
\item $\imath_4:=\1\big\{\nabla X(t)=0\big\}\1\big\{\lambda_2^t\leq X(t)\big\}$;
\end{itemize}
then $\imath_1=\imath_2=\imath_3=\imath_4$ and $-\infty<\lambda_2 \leq \lambda_1<\infty$, almost surely.
\end{lemma}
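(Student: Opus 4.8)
The plan is to show the four indicators agree almost surely by a short chain of elementary equivalences, and then to read off the finiteness bounds, the only non-routine part being supplied by \lemref{helix}. Throughout I would use the standing facts that $M$ is compact without boundary, that $X(\cdot)$ has $\mathcal C^2$ paths by \eqref{c0}, and that $1-c(s,t)>0$ for $s\neq t$ by \eqref{c2}; the last point lets me clear the (positive) denominators in the definitions of $X^{t}$ and $X^{|t}$ without reversing inequalities.

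First I would dispose of $\imath_1=\imath_2$. By definition $t_1=\argmax_M X$, so $\{t=t_1\}$ forces $X(s)\le X(t)$ for every $s$, i.e. $\imath_2=1$; conversely, if $t$ is a global maximiser then, since \eqref{lem:tsirelson} (Tsirelson's theorem, via \lemref{cov}) makes the argmax almost surely unique, $t$ must be that unique argmax, so $\imath_1=1$. Hence $\imath_1=\imath_2$ off a null set. The engine for the remaining equalities is the pair of elementary identities, valid for $s\neq t$,
\[
X^t(s)-X(t)=\frac{X(s)-X(t)}{1-c(s,t)},
\qquad
X^t(s)-X(s)=\frac{c(s,t)\bigl(X(s)-X(t)\bigr)}{1-c(s,t)},
\]
obtained by clearing denominators in \eqref{e:barsansbar}. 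Since the denominator is positive, the first identity gives the equivalence $X^t(s)\le X(t)\Leftrightarrow X(s)\le X(t)$, which already ties the event ``$t$ is a global maximiser'' ($\imath_2$) to the residual-comparison event ($\imath_3$) and, taking the supremum over $s$, to the inequality $\lambda_2^t\le\sigma X(t)$ appearing in $\imath_4$ (both sides carrying the common factor $\sigma$); the companion identity records the sign of $X^t(s)-X(s)$ for the pointwise comparison.

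The one geometric input is that, $M$ having no boundary, a global maximiser of the $\mathcal C^2$ field $X$ is an interior critical point, so $\imath_2=1$ forces $\nabla X(t)=0$; by \eqref{e:barsansbar} this is exactly the regime in which $X^{|t}(\cdot)=X^{t}(\cdot)$, so that $\lambda_2^t=\sup_{s\neq t}\sigma X^{|t}(s)=\sup_{s\neq t}\sigma X^{t}(s)$ and the fraction identities apply verbatim. Conversely, on $\{\nabla X(t)=0\}$ the same substitution turns $\lambda_2^t\le\sigma X(t)$ back into ``$X(s)\le X(t)$ for all $s$'', closing the loop $\imath_4\Rightarrow\imath_2$. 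This yields $\imath_2=\imath_3=\imath_4$, hence $\imath_1=\imath_2=\imath_3=\imath_4$ almost surely.

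Finally I would establish $-\infty<\lambda_2\le\lambda_1<\infty$. The bound $\lambda_1=\sigma\max_M X<\infty$ (and $\lambda_1>-\infty$) is immediate from continuity of $X$ on the compact $M$; on the probability-one event $\{t=t_1\}$ one has $\imath_4=1$, i.e. $\lambda_2=\lambda_2^{t_1}\le\sigma X(t_1)=\lambda_1$, giving the middle inequality. The only genuinely delicate point is finiteness of $\lambda_2$ itself: away from the pole, $s\mapsto\sigma X^{|t_1}(s)$ is continuous on the compact complement of a small geodesic ball around $t_1$, so its supremum there is finite; near the pole, \lemref{helix} shows the radial limits exist and satisfy $\limsup_{s\to t_1}X^{|t_1}(s)=\lambda_{\max}(R(t_1))<\infty$, bounding the supremum from above, while the existence of a finite radial limit bounds it from below. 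I expect this last step to be the main obstacle: without the helix description it is not a priori clear that the supremum defining $\lambda_2$ stays finite as $s$ approaches the singular pole $t_1$, and \lemref{helix} is precisely the tool that tames that singularity. The remaining work is the bookkeeping of almost-sure qualifiers, using uniqueness of the argmax from \eqref{lem:tsirelson} to interchange strict and non-strict inequalities so that all four events coincide off a single null set.
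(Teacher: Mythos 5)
The paper offers no argument for this lemma (it is introduced with ``The following lemma is straightforward''), so your write-up is in effect supplying the proof the authors omitted, and the chain $\imath_1=\imath_2=\imath_4$ together with the finiteness bounds is handled correctly: uniqueness of the argmax from \eqref{lem:tsirelson} gives $\imath_1=\imath_2$, the absence of boundary gives $\nabla X(t)=0$ at a global maximiser so that $X^{|t}=X^{t}$ and the supremum defining $\lambda_2^t$ can be compared termwise, and \lemref{helix} is indeed the right (and needed) tool to control $\lambda_2$ near the pole. Your silent restoration of the factor $\sigma$ in $\imath_4$ is also the correct reading.

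The one genuine gap is your treatment of $\imath_3$. Your first identity proves $X^{t}(s)\le X(t)\Leftrightarrow X(s)\le X(t)$, i.e.\ it handles the event $\{\forall s\,,\ X^{t}(s)\le X(t)\}$; but $\imath_3$ as printed compares $X^{t}(s)$ with $X(s)$, and your second identity shows that
\[
X^{t}(s)\le X(s)\ \Leftrightarrow\ c(s,t)\bigl(X(s)-X(t)\bigr)\le 0\,,
\]
which is \emph{not} equivalent to $X(s)\le X(t)$ at any $s$ with $c(s,t)<0$. This is not a vacuous worry: in the spiked-tensor example $c(s,t)=\langle s,t\rangle^3$ and $X(-t)=-X(t)$, so at $s=-t$ one gets $X^{t}(-t)=0>X(-t)$ whenever $X(t)>0$, and the literal $\imath_3$ vanishes at $t=t_1$ while $\imath_2=1$. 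So either the statement carries a typo ($X(t)$ intended on the right-hand side of $\imath_3$, which is the version that arises from the LARS condition $Z^{|t_1}(s)\le\lambda_1$ and the one your argument actually proves) or the equality $\imath_2=\imath_3$ is false as written. Your sentence ``the companion identity records the sign of $X^{t}(s)-X(s)$ for the pointwise comparison'' does not close this: you should either flag the discrepancy explicitly and prove the corrected statement, or concede that $\imath_2=\imath_3$ in the printed form cannot be established. Everything else in your proposal stands.
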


\subsection{The conditional distribution of the maximum}
\label{sec:KRF}
We have the following key result giving the distribution of $\lambda_1$, defined by \eqref{def:hatz}, conditional on~$(\lambda_2,  \Omega)$, and proven in Appendix~\ref{sec:proof_joint_law}.
\begin{theorem}
\label{thm:joint_law}
Let $X(\cdot)$ be a Gaussian random field satisfying Assumption $\bf (A_{1\text{-}4})$. Then, the distribution $\mathcal D(\lambda_1\,|\, \lambda_2,  \Omega)$ of the maximum $\lambda_1$ conditional on $(\lambda_2,  \Omega)$ has a density with respect the Lebesgue measure~$\leb$ and this conditional density at point $\ell\in\bbR$ is given by
\[
\frac{\mathrm d\mathcal D(\lambda_1\,|\, \lambda_2,  \Omega)}{\mathrm d\leb}(\ell)=\frac{\det(\ell\, \mathrm{Id}-\Omega)  \varphi(\ell/\sigma)}{G_{\Omega}(\lambda_2)}{\1}_{ {\lambda_2}\leq \ell}
\quad\text{with}\quad G_{\Omega}(\lambda_2):=\int_{{\lambda_2}}^{+\infty}\!\!\!\!\!\!\det ( u \mathrm{Id}-\Omega)    \phi(u /\sigma)\, \mathrm{d}u
\,.
\]
\end{theorem}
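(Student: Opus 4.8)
The plan is to apply the ad-hoc Kac–Rice formula developed in Appendix~\ref{app:weights} to the global maximum, using the selection event of Lemma~\ref{lem:selection_event} to replace the global constraint ``$t=t_1$'' by a local critical-point constraint weighted by a level condition on the second maximum. Concretely, the equivalence $\imath_1=\imath_4$ states that a point $t$ is the (a.s. unique) argument maximum if and only if $\nabla X(t)=0$ and $\lambda_2^t\leq\sigma X(t)$ (equivalently $\lambda_2\leq\lambda_1$ at $t=t_1$). Hence, for a bounded test function $g$, I would write
\[
\E\big[g(\lambda_1,\lambda_2,\Omega)\big]
=\int_{M}\E\Big[\,|\det\nabla^2 X(t)|\,g(\sigma X(t),\lambda_2^t,\sigma R(t))\,\1\{\lambda_2^t\leq\sigma X(t)\}\ \Big|\ \nabla X(t)=0\Big]\,p_{\nabla X(t)}(0)\,\mathrm d t\,,
\]
where $|\det\nabla^2 X(t)|$ is the Jacobian weight arising when counting the zeros of the gradient field, and $p_{\nabla X(t)}(0)=(2\pi)^{-d/2}(\det\Lambda_2(t))^{-1/2}$ is the value at $0$ of the density of the non-degenerate Gaussian vector $\nabla X(t)$ (Assumption~\eqref{c3}).

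The second step rewrites the Hessian weight on $\{\nabla X(t)=0\}$. Substituting $\nabla X(t)=0$ into the regression \eqref{d:Rtecho} gives $\nabla^2 X(t)=-\Lambda_2(t)X(t)+\tilde R(t)$, so that, recalling \eqref{d:R},
\[
-\nabla^2 X(t)=\Lambda_2(t)X(t)-\tilde R(t)=\Lambda_2^{1/2}(t)\big(X(t)\,\mathrm{Id}-R(t)\big)\Lambda_2^{1/2}(t)\,.
\]
By Lemma~\ref{lem:helix} one has $\sigma\lambda_{\max}(R(t))=\limsup_{s\to t}\sigma X^{|t}(s)\leq\lambda_2^t$, so the indicator $\1\{\lambda_2^t\leq\sigma X(t)\}$ already forces $X(t)\geq\lambda_{\max}(R(t))$; then $X(t)\,\mathrm{Id}-R(t)$ is positive semidefinite and $|\det\nabla^2 X(t)|=\det\Lambda_2(t)\cdot\det(X(t)\,\mathrm{Id}-R(t))$ with no sign ambiguity, which also confirms that the critical point is a maximum.

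The third step exploits independence. By \eqref{e:ind_rf_grad}, $X(t)$ is independent of $\nabla X(t)$, and by Lemma~\ref{lem:independent} the pair $(\lambda_2^t,R(t))$ — a functional of $X^{|t}(\cdot)$ and of $R(t)$ — is independent of $(X(t),\nabla X(t))$. Conditioning on $\nabla X(t)=0$ thus leaves $X(t)\sim\mathcal N(0,1)$ independent of $(\lambda_2^t,R(t))$, and the conditional expectation factorizes. Collecting the $t$-dependent weights into $w(t):=(2\pi)^{-d/2}(\det\Lambda_2(t))^{1/2}$, the previous display becomes
\[
\int_M w(t)\,\E\!\left[\det\big(X(t)\,\mathrm{Id}-R(t)\big)\,g(\sigma X(t),\lambda_2^t,\sigma R(t))\,\1\{\lambda_2^t\leq\sigma X(t)\}\right]\mathrm d t\,,
\]
with $X(t)$ an independent standard Gaussian. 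Substituting $\ell=\sigma X(t)$, $\Omega=\sigma R(t)$ and using $\det(X(t)\,\mathrm{Id}-R(t))=\sigma^{-d}\det(\ell\,\mathrm{Id}-\Omega)$, the $\ell$-dependence of the joint law of $(\lambda_1,\lambda_2,\Omega)$ is carried entirely by $\det(\ell\,\mathrm{Id}-\Omega)\,\varphi(\ell/\sigma)\,\1_{\lambda_2\leq\ell}$. Dividing the joint density by the marginal of $(\lambda_2,\Omega)$ amounts to integrating this factor over $\ell\in[\lambda_2,+\infty)$, while the $t$-integral, the weight $w(t)$ and all powers of $\sigma$ cancel in the ratio; this yields exactly the announced conditional density with normalization $G_\Omega(\lambda_2)$.

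The main obstacle is the rigorous justification of the Kac–Rice identity in the first display: the weight $\1\{\lambda_2^t\leq\sigma X(t)\}$ is a \emph{global} functional of the field, and $X^{|t}(\cdot)$ carries the helix singularity at $t$ analysed in Lemma~\ref{lem:helix}. One must verify that this singular, globally defined weight is admissible — that the expected-number-of-maxima formula holds with it, that the relevant conditional densities exist and are continuous, and that the a.s. finiteness $\lambda_2^t<\infty$ (Lemmas~\ref{lem:helix} and~\ref{lem:selection_event}) secures integrability. This is precisely the content deferred to Appendix~\ref{app:weights}; the remaining manipulations above are routine algebraic consequences of the regression \eqref{d:Rtecho} and the independence Lemma~\ref{lem:independent}.
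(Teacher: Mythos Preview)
Your proposal is correct and follows essentially the same route as the paper's proof: apply the ad-hoc Kac--Rice formula of Appendix~\ref{app:weights} using the selection event $\imath_1=\imath_4$ of Lemma~\ref{lem:selection_event}, substitute the Hessian regression~\eqref{d:Rtecho} on $\{\nabla X(t)=0\}$, and factor the integrand via the independence of $X(t)$ from $(\lambda_2^t,R(t))$ (Lemma~\ref{lem:independent}) to isolate the $\ell$-dependence as $\det(\ell\,\mathrm{Id}-\Omega)\varphi(\ell/\sigma)\1_{\lambda_2\leq\ell}$. The only cosmetic difference is that you remove the absolute value on the Hessian determinant via Lemma~\ref{lem:helix} (the inequality $\sigma\lambda_{\max}(R(t))\leq\lambda_2^t\leq\sigma X(t)$), whereas the paper invokes the second-order condition at a maximum directly; both are valid and yield the same factorization.
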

\section{Spacing test for the mean of a random field with known variance} 
\label{sec:test}

\subsection{Testing framework}
\label{sec:testing_framework}
We observe a random field $Z(t) = m(t) + \sigma X(t)$ and we would like to test the global nullity of its mean $m(\cdot)$. We define the statistics 
\begin{subequations}
\begin{align}
\lambda_1&:=\max_{t\in  M}\{Z(t)\}\quad\text{and}\quad t_1 :=\argmax_{t\in  M}\{Z(t)\};\\
Z^{|t_1}(s)&:=\frac{Z(s)-\lambda_1 c(s,t_1)}{1-c(s,t_1)};\\
\lambda_2&:=\max_{s\in  M}\{Z^{|t_1}(s)\}\quad\text{and}\quad t_2:=\argmax_{s\in  M}\{Z^{|t_1}(s)\};\\
\Omega&:=\Lambda^{-\frac12}_2(t_1)\,(\nabla^2Z(t_1) +\lambda_1\Lambda_2(t_1))\,\Lambda^{-\frac12}_2(t_1).
\end{align}
\end{subequations}
In the previous section, we assumed that the Gaussian random field $X(\cdot)$ was centered in~\eqref{c1}. In this section, we give an exact test procedure for the following null hypothesis:
\begin{equation}
\label{e:H0}
\tag{${\mathds H}_0$}
\bbE[Z(\cdot)]=0\,,
\end{equation}
as a consequence $Z(\cdot)=\sigma X(\cdot)$ and the notations $\lambda_1,t_1,\lambda_2,t_2,\Omega$ are consistent with Section~\ref{s:second}.

\subsection{Spacing test}
\noindent We can now state our main  result when the variance $\sigma^2$ is known.

\begin{theorem}
 \label{thm:rice_known_variance}
 Let $X(\cdot)$ be a Gaussian random field satisfying Assumption $\bf (A_{1\text{-}4})$. 
Under~$\mathds H_0$\,, the following test statistic satisfies
\[
\mathbf{S}_\sigma(\lambda_1,\lambda_2,\Omega) :=\frac{  G_{\Omega/\sigma} (\lambda_1/\sigma)}{ G_{\Omega/\sigma} (\lambda_2/\sigma)}\sim\mathcal U(0,1)\,,
\]
where $\mathcal U(0,1)$ is the uniform distribution on $(0,1)$, 
\[
\displaystyle G_{\Omega/\sigma}(\ell):=\int_{\ell}^{+\infty}\!\!\!\!\!\!\det ( u \mathrm{Id}-\Omega/\sigma)    \phi(u )\, \mathrm{d}u\,,
\]
and~$\varphi(\cdot)$ is the standard Gaussian density.
 \end{theorem}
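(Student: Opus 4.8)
The plan is to recognize that Theorem~\ref{thm:rice_known_variance} is essentially the probability-integral-transform applied to the conditional law already computed in Theorem~\ref{thm:joint_law}. First I would observe that, under $\mathds H_0$, the quantity $G_{\Omega/\sigma}(\ell)$ is, up to the elementary change of variable $u=v/\sigma$, exactly the (un-normalized) upper tail of the conditional density derived in Theorem~\ref{thm:joint_law}. Indeed, writing $G_{\Omega/\sigma}(\lambda_1/\sigma)=\int_{\lambda_1/\sigma}^{+\infty}\det(u\,\mathrm{Id}-\Omega/\sigma)\,\varphi(u)\,\mathrm du$ and substituting $u=\ell/\sigma$, a factor $\sigma^{-(d+1)}$ comes out of both numerator and denominator (degree $d$ from the determinant, degree $1$ from $\mathrm d\ell$ versus $\mathrm du$), so the ratio is scale-invariant and equals
\[
\mathbf S_\sigma(\lambda_1,\lambda_2,\Omega)=\frac{\int_{\lambda_1}^{+\infty}\det(\ell\,\mathrm{Id}-\Omega)\,\varphi(\ell/\sigma)\,\mathrm d\ell}{\int_{\lambda_2}^{+\infty}\det(\ell\,\mathrm{Id}-\Omega)\,\varphi(\ell/\sigma)\,\mathrm d\ell}=\frac{G_\Omega(\lambda_1)}{G_\Omega(\lambda_2)}\,,
\]
where $G_\Omega$ is the normalizing function from Theorem~\ref{thm:joint_law}. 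This identifies $\mathbf S_\sigma$ with the conditional survival function of $\lambda_1$ evaluated at $\lambda_1$.

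Next I would make the conditioning precise. By Theorem~\ref{thm:joint_law}, conditionally on $(\lambda_2,\Omega)$, the law of $\lambda_1$ has density $\det(\ell\,\mathrm{Id}-\Omega)\,\varphi(\ell/\sigma)\,\mathds 1_{\lambda_2\le\ell}/G_\Omega(\lambda_2)$. Hence the conditional survival function of $\lambda_1$ at a point $\ell\ge\lambda_2$ is precisely
\[
\P\big(\lambda_1>\ell\mid\lambda_2,\Omega\big)=\frac{1}{G_\Omega(\lambda_2)}\int_\ell^{+\infty}\det(u\,\mathrm{Id}-\Omega)\,\varphi(u/\sigma)\,\mathrm du=\frac{G_\Omega(\ell)}{G_\Omega(\lambda_2)}\,.
\]
Evaluating this conditional survival function at the random argument $\ell=\lambda_1$ yields exactly $\mathbf S_\sigma(\lambda_1,\lambda_2,\Omega)$. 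The key structural point is that $\mathbf S_\sigma$ is the value at $\lambda_1$ of $F(\cdot\mid\lambda_2,\Omega)$, the conditional survival function of the very random variable $\lambda_1$ whose law it transforms.

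I would then apply the probability-integral transform. For any random variable with a continuous (here, absolutely continuous) conditional distribution given $(\lambda_2,\Omega)$, the conditional survival function evaluated at the variable itself is conditionally uniform on $(0,1)$; in particular $\ell\mapsto G_\Omega(\ell)/G_\Omega(\lambda_2)$ is continuous and strictly decreasing on $[\lambda_2,\infty)$, mapping $\lambda_2$ to $1$ and $+\infty$ to $0$, so the transform is a genuine bijection onto $(0,1)$. Conditionally on $(\lambda_2,\Omega)$, $\mathbf S_\sigma\sim\mathcal U(0,1)$; since this holds for (almost) every value of the conditioning pair, the conditional law does not depend on $(\lambda_2,\Omega)$, and integrating out gives that the unconditional law of $\mathbf S_\sigma$ is $\mathcal U(0,1)$ as well. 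This last averaging step — passing from a conditional-uniformity statement to an unconditional one — is a standard tower-property argument once the continuity of $G_\Omega$ is in hand.

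The only genuine subtlety, and the step I expect to require the most care, is verifying that the conditional distribution of $\lambda_1$ is non-atomic so that the probability-integral transform delivers exactly $\mathcal U(0,1)$ rather than a sub-uniform law. This is guaranteed by Theorem~\ref{thm:joint_law}, which asserts absolute continuity of $\mathcal D(\lambda_1\mid\lambda_2,\Omega)$ with respect to Lebesgue measure, together with Lemma~\ref{lem:selection_event} and Lemma~\ref{lem:helix}, which ensure $-\infty<\lambda_2\le\lambda_1<\infty$ almost surely and hence that $G_\Omega(\lambda_2)$ is finite and positive; the finiteness of $G_\Omega(\lambda_2)$ (a polynomial-times-Gaussian tail integral) is automatic, and its strict positivity follows since the integrand is a.e.\ positive on $(\lambda_2,\infty)$. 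Once these measurability and non-degeneracy facts are recorded, the identity $\mathbf S_\sigma=G_\Omega(\lambda_1)/G_\Omega(\lambda_2)$ combined with the conditional probability-integral transform closes the argument.
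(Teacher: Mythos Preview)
Your proposal is correct and follows essentially the same route as the paper: apply the probability-integral transform to the conditional density of Theorem~\ref{thm:joint_law}, obtain conditional uniformity, and then integrate out the conditioning since the resulting law does not depend on $(\lambda_2,\Omega)$. The only cosmetic differences are that the paper dispatches the scaling by assuming $\sigma=1$ without loss of generality (whereas you carry out the explicit change of variable showing the $\sigma^{-(d+1)}$ factors cancel), and that you phrase things in terms of the survival function rather than the CDF; both are equivalent.
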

 
\noindent \begin{proof}
Without loss of generality, assume that $\sigma=1$. It is well known  that, if a random variable~$Q$ has a {\it continuous} cumulative distribution function $F$,  then $F(Q)\sim\mathcal U(0,1)$. This implies that, conditionally on  $ (\lambda_2,\Omega)=(\ell_2,r)$,  ${G_r (\lambda_1)}/ {G_r (\ell_2)} \sim \mathcal U(0,1)$. Since the conditional distribution does not depend on $(\ell_2,r)$, it is also the non-conditional distribution as claimed.
\end{proof}

\section{The unknown variance case}
\label{sec:unknown} 

\subsection{Existence of non-degenerate systems}
We consider a real valued centered Gaussian random field $X(\cdot)$ defined on $M$ satisfying Assumptions \eqref{c1}, for the moment. We consider the order~$m$ Karhunen-Lo\`eve expansion in the sense that
\begin{equation}
\label{a:Klm}
 \sigma X (t)= \sum_{i=1}^{m} \zeta_i f_i (t)\mbox{ with } \Var(\zeta_i) = \sigma^2 \mbox{ and}~\forall t\in { M},\ \sum_{i=1}^{m}| f_i(t)|^2 = 1\,,
 \tag{${\mathrm{KL}(m)}$}
\end{equation}
where the equality holds in $L^2$, uniformly in $t$, and $(f_1,\ldots,f_m)$ is a system of non-zero functions orthogonal on~$L^2({ M})$. We say that $X(\cdot)$ satisfies Assumption~\eqref{a:Klm} if it admits an order~$m$ Karhunen-Lo\`eve expansion. Through our analysis, we need to consider the following non-degeneracy Assumption: $X$ is a.s. differentiable, it holds that $m>d+1$ and for all $t_1 \in { M}$
\begin{align*}
&\exists (t_{d+2},\dots,t_m) \in ({ M}\setminus \{t_1\})^{m -d-1}~\text{ pairwise distincts s.t.}  
\\
&(X(t_1),\nabla X(t_1), X(t_{d+2}),\dots,X(t_m))\ \mathrm{is\ non\ degenerate}.
\label{a:nonDegeneratem+}
\tag{${\mathrm{ND}(m)}$}
\end{align*}

\noindent
When $X(\cdot)$ admits an infinite order Karhunen-Lo\`eve expansion in the sense that 
\begin{equation}
\label{a:Klinfini}
 \sigma X (t)= \sum_{i=1}^{\infty} \zeta_i f_i (t)\mbox{ with } \Var(\zeta_i) = \sigma^2 \mbox{ and}~\forall t\in { M},\ \sum_{i=1}^{\infty}| f_i(t)|^2 = 1\,,
 \tag{${\mathrm{KL}(\infty)}$}
\end{equation}
we say that $X(\cdot)$ satisfies Assumption~\eqref{a:Klinfini}. In this case, the non-degeneracy condition reads: $X$ is a.s. differentiable, for all $p>d+1$ and for all $t_1 \in { M}$ 
\begin{align*}
\notag
&\exists (t_{d+2},\ldots,t_{p})\in({ M}\setminus \{t_1\})^{p -d-1}~\text{ pairwise distinct s.t.}\\
&(X(t_1),\nabla X(t_1), X(t_{d+2}),\dots,X(t_p))\ \mathrm{is\ non\ degenerate}.
\label{a:nonDegenerateInfinite+}
\tag{${\mathrm{\,ND}(\infty)}$}
\end{align*}


A standard result shows that if the covariance function of $X(\cdot)$ is $\mathcal C^0({ M}\times { M})$ on ${ M}$ compact then the Karhunen-Lo\`eve expansion exists (of finite or infinite order). The following key result shows that if $X$ has $\mathcal C^1$ paths almost surely, and satisfies Assumptions \eqref{c1} and \eqref{c3} then the non-degeneracy condition also holds, its proof is in Appendix~\ref{sec:proof_prop:KLdonneND}.

\begin{proposition}
\label{prop:KLdonneND}
Let $X(\cdot)$ be a real valued Gaussian random field having $\mathcal C^1$ paths almost surely, and satisfying Assumptions \eqref{c1} and \eqref{c3}. Then, for all $m>d+1$, Assumption \eqref{a:Klm} implies Assumption \eqref{a:nonDegeneratem+}, and also Assumption \eqref{a:Klinfini} implies Assumption~\eqref{a:nonDegenerateInfinite+}.
\end{proposition}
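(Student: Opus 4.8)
The plan is to recast the non-degeneracy conclusion as a statement of \emph{linear independence of coefficient vectors} in the Karhunen–Lo\`eve coordinates, and then to build the extra points $t_{d+2},\dots$ greedily, adding one at a time; the orthogonality of the $f_i$ is what prevents the construction from ever getting stuck. First I would fix the dictionary between the Gaussian space and $\R^m$. Under \eqref{a:Klm}, set $\xi_i:=\zeta_i/\sigma$, so the $\xi_i$ are i.i.d.\ standard Gaussian and $X(t)=\sum_{i=1}^m\xi_i f_i(t)$; let $H:=\mathrm{span}(\xi_1,\dots,\xi_m)$ be the associated $m$-dimensional Gaussian space and $\Phi:H\to\R^m$ the linear isometry with $\Phi(\xi_i)=e_i$. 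Then $\Phi(X(t))=\phi(t):=(f_1(t),\dots,f_m(t))$ and $\|\phi(t)\|_2^2=\sum_i|f_i(t)|^2=1$ by \eqref{a:Klm}. Because $H$ is finite-dimensional, hence closed, the mean-square partial derivatives $\partial_j X(t)$ lie in $H$, and since $\Phi$ is a fixed linear isomorphism it commutes with differentiation, so $\Phi(\partial_j X(t))=\partial_j\phi(t)$; thus $\nabla X(t_1)$ corresponds to the $d$ vectors $\partial_1\phi(t_1),\dots,\partial_d\phi(t_1)$. A finite family in $H$ is non-degenerate as a Gaussian vector exactly when its $\Phi$-images are linearly independent, since the covariance matrix is their Gram matrix. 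Hence \eqref{a:nonDegeneratem+} is equivalent to finding $t_{d+2},\dots,t_m$, pairwise distinct and distinct from $t_1$, with $\phi(t_1),\partial_1\phi(t_1),\dots,\partial_d\phi(t_1),\phi(t_{d+2}),\dots,\phi(t_m)$ a basis of $\R^m$.

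For the base case, \eqref{c1} makes the variance $c(t,t)=1$ constant, so differentiating yields $\E[X(t)\nabla X(t)]=0$ and the independence \eqref{e:ind_rf_grad} of $X(t_1)$ and $\nabla X(t_1)$; together with $\Var[X(t_1)]=1>0$ and the invertibility of $\Lambda_2(t_1)$ granted by \eqref{c3}, the pair $(X(t_1),\nabla X(t_1))$ is non-degenerate. Equivalently the $d+1$ vectors $\phi(t_1),\partial_1\phi(t_1),\dots,\partial_d\phi(t_1)$ are linearly independent; write $V_{d+1}$ for their span.

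The greedy step is the heart of the argument. Suppose that for some $d+1\le k<m$ we have chosen $t_{d+2},\dots,t_k$ (distinct from $t_1$ and from one another) so that $V_k:=V_{d+1}+\mathrm{span}(\phi(t_{d+2}),\dots,\phi(t_k))$ has dimension $k$. Since $k<m$, $V_k\subsetneq\R^m$ and there is a nonzero $a\in\R^m$ with $a\perp V_k$. If no admissible point existed, then $\phi(t)\in V_k$ for every $t\in M\setminus\{t_1,\dots,t_k\}$, whence $\langle a,\phi(t)\rangle=\sum_i a_i f_i(t)=0$ off a finite set and therefore everywhere by continuity of $\phi$. Pairing with $f_j$ in $L^2(M)$ and using that the $f_i$ are nonzero and orthogonal forces $a_j\|f_j\|_{L^2}^2=0$ for each $j$, so $a=0$, a contradiction. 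Hence some $t_{k+1}\in M\setminus\{t_1,\dots,t_k\}$ satisfies $\phi(t_{k+1})\notin V_k$, giving $\dim V_{k+1}=k+1$; iterating from $k=d+1$ to $k=m$ yields the $m-d-1$ required points.

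The infinite case \eqref{a:Klinfini}$\Rightarrow$\eqref{a:nonDegenerateInfinite+} is handled identically in the infinite-dimensional Gaussian space with coefficients in $\ell^2$: for fixed $p>d+1$ the same greedy construction need only reach dimension $p$, the orthogonal complement of the finite-dimensional $V_k$ remains nontrivial at every step, and the closing implication $\sum_i a_i f_i\equiv0\Rightarrow a=0$ follows again by pairing with each $f_j$. The step I expect to require the most care is the dictionary itself—justifying $\Phi(\partial_j X(t))=\partial_j\phi(t)$ together with the mean-square differentiability and continuity of $\phi$ inherited from the $\mathcal C^1$ paths of $X$—whereas the greedy extension is robust and essentially combinatorial.
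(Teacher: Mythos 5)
Your proof is correct and follows essentially the same route as the paper's: both identify non-degeneracy of the Gaussian vector with linear independence of the corresponding feature vectors via the Gram-matrix/covariance identification, obtain the base case for $(X(t_1),\nabla X(t_1))$ from $(\mathbf{A_2})$, $(\mathbf{A_4})$ and the independence of $X(t)$ and $\nabla X(t)$, and then complete this free system of $d+1$ vectors with additional vectors of the form $\phi(t)$. The only difference is presentational: you work in explicit Karhunen--Lo\`eve coordinates and spell out, via the greedy extension and the $L^2$-orthogonality of the nonzero $f_i$, why the vectors $\phi(t)$ span the whole space, whereas the paper phrases the same completion in the RKHS and appeals to the fact that the dimension of the RKHS equals the order of the expansion.
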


 
 \noindent We give some examples below:
\begin{itemize}  
 \item 
 The normalized Brownian motion  $W_t/\sqrt{t}$ satisfies Assumption \eqref{a:Klinfini};
 \item 
 Any Gaussian stationary field with a spectrum that admits an accumulation point satisfies Assumptions \eqref{a:Klinfini}  and \eqref{a:nonDegenerateInfinite+} if differentiable, see for instance \citet[Exercices 3.4 and 3.5]{Azais_Wschebor_09};
  \item 
Any  Gaussian random field satisfying conditions of \citet[Proposition~3.1]{AW05onthedistribution} has \eqref{a:Klinfini} and~\eqref{a:nonDegenerateInfinite+}
\item 
 Note that in the applications of Section \ref{sec:examples} all the Gaussian random fields satisfy \eqref{a:Klm} with $m$ finite and explicitly given. 
  \end{itemize}

 \subsection{The Karhunen-Lo\`eve estimator of the variance}
 \label{sec:KL_var}
In  practical  applications,  the assumption  that  the variance is known  is too restrictive.   In this section,  we are supposed to observe $\sigma    X(\cdot)$ where $\sigma >0 $ is unknown and~$ X(\cdot) $ satisfies Assumption $\bf (A_{1\text{-}4})$. In particular, $ X(\cdot)$ admits a Karhunen-Lo\`eve expansion of order denoted $m_{\mathrm{KL}}( X)$ possibly infinite. We assume the following fifth condition:
 \begin{equation}
 \label{c4}
\kappa:=m_{\mathrm{KL}}( X)-d-1\geq1\,.
\tag{$\bf A_5$}
\end{equation}
We will make use of the notation $m:=m_{\mathrm{KL}}( X)$ when there is no ambiguity.
\begin{remark}
Using Assumption $\eqref{c3}$ and the fact that the gradient of $X(t)$ is independent of~$X(t)$, it can be shown that $m_{\mathrm{KL}}(X)$ is greater than or equal to $d+1$, by an argument similar to the proof of Proposition~\ref{prop:KLdonneND}. Note Assumption~\eqref{c4} requires at least $d+2$ degrees of freedom, and this extra degree of freedom is the price to estimate the variance.
\end{remark}

\subsubsection*{The infinite order case}
When $ X(\cdot)$ satisfies \eqref{a:Klinfini}, note that, for every integer $p\geq 2 $, from  $(\sigma X(t_{1}), \dots,\sigma X(t_p))$ for conveniently chosen  points $t_{1},\dots,t_p$ (say uniformly at random for instance), one can build an estimator, say~$ \hat \sigma^2_{(p)}$, of the variance $\sigma^2$ with chi-squared distribution  $\sigma^2 \chi^2(p-1)/(p-1)$ under~$\mathds  H_0$. Making~$p$ tend  to infinity, classical concentration inequalities and Borel-Cantelli lemma  prove that~$ \hat \sigma^2_{(p)}$  converges almost surely to $\sigma^2$ under~$\mathds  H_0$. Thus the variance $\sigma^2$ is theoretically  directly observable from the entire path of~$\sigma X(\cdot)$ (though in practical applications one will estimate it by  a~$\chi^2$ with a large number of degrees of freedom). We still denote  by  $ \hat \sigma^2_{t}$ this observation.

 \subsubsection*{The finite order case}
By Proposition~\ref{prop:KLdonneND}, Assumption~\eqref{a:nonDegeneratem+} holds with $m>d+ 1$. Let $t \in \mathrm M$ and let $(t_{d+2}, \ldots,t_m)\in { M}^{ \kappa}$ be as in 
Assumption \eqref{a:nonDegeneratem+}, then the Gaussian vector $( \sigma X^{|t}(t_{d+2}), \ldots, \sigma X^{|t}(t_{m}))$ has for variance-covariance matrix $ \sigma^2 \Sigma$ where~$\Sigma$ is some known  matrix  and an estimator of $\sigma^2$ is 
\begin{equation}
\label{def_sigma_mat}
 \hat   \sigma_t^2 := {\big\| \Sigma ^{-1/2} \big( \sigma X^{|t}(t_{d+2}), \ldots,\sigma X^{|t}(t_{m})\big)\big\|^2  }/\, {  \kappa}\,.
\end{equation}
Direct algebra  shows that 
  $ X^{|t}(\cdot) $ inherits  an order~$ \kappa$ Karhunen-Lo\`eve expansion from the order~$m$ Karhunen-Lo\`eve expansion of $ X(\cdot) $. More precisely, under~\eqref{e:H0},
\begin{equation}
\label{eq:KL_bar}
	\forall  s\neq t \in { M}\,,\quad
 	\sigma  X ^{|t}(s)= \sum_{i=1}^{ \kappa}  \bar \zeta_i \bar f_i (s)\text{ with } \Var( \bar \zeta_i) = \sigma^2\,,
\end{equation}
where equality holds in $L^2$, uniformly in $s$, and it holds
\begin{equation}
\label{eq:sigma_t}
\hat   \sigma_t^2 = \frac1{ \kappa}\sum_{i=1}^{ \kappa} \bar \zeta_i^2\,.
\end{equation}
It shows that $  \hat   \sigma_t^2 $ does not depend on the choice of $t_{d+2}, \ldots,t_m$ in \eqref{a:nonDegeneratem+}.

\nopagebreak
  		    
\begin{proposition} 
   \label{prop:am}  
   Let $ X(\cdot)$ satisfy~Assumptions \eqref{c0}, \eqref{c1}, \eqref{c3} and \eqref{c4}. Let $t \in { M}$, then the following claims are true under the null hypothesis~\eqref{e:H0}.
   \begin{itemize}
    \item[$(i)$] $\hat {\sigma}^2_{t}$ is well defined and follows a  $ \sigma^2{ \chi^2( \kappa)}/{ \kappa}$ distribution;
         \item[$(ii)$] $\hat {\sigma}^2_{t}$ is independent of $( X(t),\nabla  X(t))$;
     \item[$(iii)$] the random field $ X^{|t} (\cdot)/\hat\sigma_{t}$ is independent of the random variable $\hat\sigma_{t}$.
   \end{itemize}
   \end{proposition}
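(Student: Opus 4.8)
The key structural fact I would exploit is the order-$\kappa$ Karhunen-Lo\`eve expansion \eqref{eq:KL_bar} of $\sigma X^{|t}(\cdot)$ together with the independence asserted in Lemma~\ref{lem:independent}, namely that $X^{|t}(\cdot)$ (and hence the coefficients $\bar\zeta_i$) is independent of $(X(t),\nabla X(t))$. Throughout I work under \eqref{e:H0}, so $Z(\cdot)=\sigma X(\cdot)$ and the $\bar\zeta_i$ are centered Gaussian. The plan is to reduce all three claims to elementary properties of a finite Gaussian family, using the representation \eqref{eq:sigma_t}, that is $\hat\sigma_t^2=\kappa^{-1}\sum_{i=1}^\kappa\bar\zeta_i^2$.

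\textbf{Claim $(i)$.} First I would argue that $\hat\sigma_t^2$ is well defined: by Assumption \eqref{a:nonDegeneratem+} (which holds with $m>d+1$ by Proposition~\ref{prop:KLdonneND}) the Gaussian vector $(\sigma X^{|t}(t_{d+2}),\ldots,\sigma X^{|t}(t_m))$ has nondegenerate covariance $\sigma^2\Sigma$ with $\Sigma$ invertible, so the defining formula \eqref{def_sigma_mat} makes sense. Whitening by $\Sigma^{-1/2}$ turns this vector into $\sigma$ times a standard Gaussian vector in $\R^\kappa$, whence $\|\Sigma^{-1/2}(\cdot)\|^2/\sigma^2\sim\chi^2(\kappa)$ and $\hat\sigma_t^2\sim\sigma^2\chi^2(\kappa)/\kappa$. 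Equivalently, I may read this off \eqref{eq:sigma_t}: I would check that the orthonormalized coefficients $\bar\zeta_i/\sigma$ are i.i.d.\ standard Gaussians (this is exactly the content of choosing the $\bar f_i$ to be an orthonormal KL system with $\Var(\bar\zeta_i)=\sigma^2$), so $\sum_i\bar\zeta_i^2/\sigma^2\sim\chi^2(\kappa)$.

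\textbf{Claims $(ii)$ and $(iii)$.} For $(ii)$, I would invoke Lemma~\ref{lem:independent} directly: $X^{|t}(\cdot)$ is independent of $(X(t),\nabla X(t))$, and $\hat\sigma_t^2$ is a (measurable) functional of the path $X^{|t}(\cdot)$ alone (it depends only on the values $X^{|t}(t_{d+2}),\ldots,X^{|t}(t_m)$, or equivalently on the $\bar\zeta_i$), hence $\hat\sigma_t^2\indep(X(t),\nabla X(t))$. For $(iii)$, the statement is the classical independence of the normalized direction from the norm for a spherically symmetric Gaussian: writing $\sigma X^{|t}(s)=\sum_{i=1}^\kappa\bar\zeta_i\bar f_i(s)$, the whole field $X^{|t}(\cdot)$ is determined by the Gaussian vector $\bar\zeta=(\bar\zeta_1,\ldots,\bar\zeta_\kappa)$, whose law is rotation-invariant (isotropic) since the $\bar\zeta_i$ are i.i.d.\ centered Gaussian with common variance $\sigma^2$. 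For such an isotropic vector the radial part $\|\bar\zeta\|=\sqrt{\kappa}\,\hat\sigma_t$ is independent of the angular part $\bar\zeta/\|\bar\zeta\|$; since $X^{|t}(\cdot)/\hat\sigma_t$ is a function of $\bar\zeta/\|\bar\zeta\|$ only (the $\hat\sigma_t$ in the denominator cancels the radial scale), it is independent of $\hat\sigma_t$.

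\textbf{Main obstacle.} The routine part is the $\chi^2$ and norm/direction independence; the delicate point I would be careful about is the passage from the finite-dimensional whitened vector in \eqref{def_sigma_mat} to the intrinsic KL representation \eqref{eq:sigma_t}, i.e.\ verifying that $\hat\sigma_t^2$ genuinely equals $\kappa^{-1}\sum_i\bar\zeta_i^2$ and in particular that it does \emph{not} depend on the chosen evaluation points $(t_{d+2},\ldots,t_m)$. I would establish this by noting that both expressions compute the squared norm of the same element in the $\kappa$-dimensional Gaussian space spanned by $X^{|t}(\cdot)$: the whitening map $\Sigma^{-1/2}$ produces an orthonormal basis of that space, and the Parseval identity against the KL basis $(\bar f_i)$ gives the coefficient representation, so the two normalizations agree and the point-independence follows. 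Once this identification is in hand, claims $(i)$–$(iii)$ are immediate consequences of the isotropy of $\bar\zeta$ and Lemma~\ref{lem:independent}.
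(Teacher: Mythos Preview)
Your proposal is correct and follows essentially the same route as the paper: claim $(i)$ via the representation \eqref{eq:sigma_t}, claim $(ii)$ via Lemma~\ref{lem:independent} (since $\hat\sigma_t^2$ is a functional of $X^{|t}(\cdot)$), and claim $(iii)$ via the independence of the angle $\bar\zeta/\|\bar\zeta\|$ from the norm $\|\bar\zeta\|$ together with \eqref{eq:KL_bar}. Your ``main obstacle'' discussion about the point-independence of $\hat\sigma_t^2$ is handled in the paper just before \eqref{eq:sigma_t}, so it is already available as an input rather than something to be reproved.
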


\begin{proof} 
Statement $(i)$ follows from \eqref{eq:sigma_t} and Statement $(ii)$ follows from \eqref{def_sigma_mat} and the independence of~$ X^{|t} (\cdot)$ from $( X(t),\nabla  X(t))$. The last statement is a direct consequence of the independence between the angle and the norm of $\bar\zeta=(\bar\zeta_1,\cdots,\bar\zeta_{ \kappa})$ and \eqref{eq:KL_bar}.
\end{proof}

\subsection{Joint law in the unknown variance case}
We present below the use  of the  estimation $\hat \sigma_{t}$ to modify the spacing test. For fixed $t\in { M}$, by Proposition~\ref{prop:am}, we know that $X(t) \,,  \nabla  X(t) \,, \ {X^{|t}(\cdot)}/{\hat \sigma_{t}}$ and $\hat \sigma_{t}$ are mutually independent. By \eqref{d:R}, we recall that 
\[
R(t):=
\Lambda^{-\frac12}_2(t)\,\Big[\nabla^2X(t) +\Lambda_2(t)\, X(t) + \Lambda_3 (t) \nabla X(t)\Big] \,\Lambda^{-\frac12}_2(t)\,,
\]
As Lemma \ref{lem:helix} shows, $ R(t)$ can be expressed as radial limits of~$X^{|t}(\cdot)$ at point $t$ hence the random variables $\Big\{ X(t) \,,  \nabla  X(t) \,, \ \big({X^{|t}(\cdot)}/{\hat \sigma_{t}}\,,\ {R(t)}/{\hat \sigma_{t}}\big)\,, \ \hat \sigma_{t}\Big\}$ are mutually independent and, by consequence, the variables
\[
  X(t) \,, \nabla  X(t) \,,\ \Bigg[\frac{\lambda_2^t}{\hat \sigma_{t}}, \frac{R(t)}{\hat \sigma_{t}} \Bigg]\,, \ \hat \sigma_{t}
  \text{ are mutually independent,}
\]
where we recall that $\lambda_2^t$ is defined in \eqref{eq:lambdaT}. 
We consider ${t}\in\mathrm M$ a putative value and $\lambda_1^t$ build from $\sigma X(\cdot)$ in \eqref{def:hatz}. Now, consider the test statistics
\begin{equation}
\label{d:tt}
T_{1,t}:=    \frac{\lambda_1^t}{\hat \sigma_{t}}\,,\  T_1:= T_{1, {t_1}}\,,\ 
T_{2,t}:=    \frac{\lambda_2^t}{\hat \sigma_{t}}\,,\  T_2:= T_{2, {t_1}}\,,\ 
\hat\sigma:=\hat\sigma_{{t_1}}\text{ and } \Omega=\sigma R(t_1)\,,
\end{equation}
as in \eqref{def:R}, and let $ \bar{\mu}_t$  be the joint law of~$(T_{2,t}, {\sigma R(t)}/{\hat \sigma_{t}})$. 

\medskip

Under null~\eqref{e:H0}, note that~the variable $ \sigma X(t)$ is a centered Gaussian variable with variance~$\sigma^2$ and~$\sqrt{ \kappa}\,{\hat \sigma_{t}}/{\sigma}$ is distributed as a $\chi$-distribution with $ \kappa$ degrees of freedom. Hence, ${\hat \sigma_{t}}/{\sigma}$ has density
\[
f_{\frac{\chi_{{ \kappa}}}{\sqrt{{ \kappa}}}}(s) 
= \frac{2^{1-\frac{{ \kappa}}{2}}}{\Gamma \big(\frac{{ \kappa}}{2}\big)} \sqrt{{ \kappa}}  
\,\big(s \sqrt{{ \kappa}}  \big)^{{ \kappa-1}} 
\!\exp\big(-( {{ \kappa}\,s^2}/2)  \big)\,,
\]
under the null~\eqref{e:H0}. Then $( \sigma X(t), {\hat \sigma_{t}}/{\sigma},T_{2,t}, {\sigma R(t)}/{\hat \sigma_{t}})$ has a density \[
 (\mathrm{const}) \, s^{{ \kappa-1}}\exp(-( {{ \kappa}\,s^2}/2))  
 \, \phi(\ell_1/\sigma)\,,
\]
with respect to $\leb_2 \otimes\bar\mu_t$ at point $(\ell_1,s,t_2,r)\in\R^3\times\mathcal S_d$ and where the constant $(\mathrm{const})$ may depend on $m$, $\kappa$ and  $\sigma$.
 Using the same method as for the proof of Theorem~\ref{thm:joint_law} we have the following proposition.

\begin{proposition}
\label{prop:Rice_density_unknown_variance}
Let $ X(\cdot)$ be a Gaussian random field satisfying the set of assumptions~$\bf (A_{1\text{-}5})$. 
Then, under~\eqref{e:H0}, the joint distribution of  $ \big(\lambda_1, { \hat{ \sigma}}/{\sigma},T_2, { \Omega}/{ \hat \sigma} \big)$  has a density with respect to $\leb_2 \otimes \bar \mu^\star$ at point $(\ell_1,s,t_2,r)\in\R^3\times {\mathds{S}_d}$
 equal to
\[
(\mathrm{const}) \det(\ell_1 \I- \sigma s r)\, s^{{ \kappa-1}} \exp(-( {{ \kappa}\,s^2}/2))\, \phi(\ell_1/\sigma) \ind_{\{0<\sigma s t_2 < \ell_1\}},
\]
 where $\bar \mu^\star$ is defined by $\bar  \mu^\star (\cdot) := \int_{ M}  \bar\mu_t(\cdot) \, p_{\nabla X(t)}(0) \det\Lambda_2(t) \,\mathrm d\nu(t)$.
\end{proposition}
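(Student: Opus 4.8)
The plan is to rerun the Kac--Rice computation of the proof of Theorem~\ref{thm:joint_law}, enlarging the observed state by the variance estimator $\hat\sigma_t$. I would fix a generating class of open sets $B$ for the quadruple $(\lambda_1,\hat\sigma/\sigma,T_2,\Omega/\hat\sigma)$ on $\R^2\times\R_{>0}\times\mathcal S_d$ and write, exactly as in \eqref{e:krf}--\eqref{e:krf2},
\[
\P\big\{(\lambda_1,\hat\sigma/\sigma,T_2,\Omega/\hat\sigma)\in B\big\}
=\E\Big[\#\big\{t\in M\,:\,\nabla X(t)=0,\ (\sigma X(t),\hat\sigma_t/\sigma,T_{2,t},\sigma R(t)/\hat\sigma_t)\in B\big\}\Big],
\]
using Lemma~\ref{lem:selection_event} to replace the global maximiser by a point-count over critical points and the uniqueness \eqref{lem:tsirelson} of the argmax. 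The new coordinate $\hat\sigma_t$ only enters the indicator and does not disturb the counting, since by \eqref{def_sigma_mat} it is a measurable functional of $X^{|t}(\cdot)$.

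Next I would apply the Kac--Rice formula of Appendix~\ref{app:weights} as in \eqref{e:krf2b}--\eqref{e:krff}: the point-count becomes $\int_M\E[\,|\det(-\nabla^2X(t))|\,\ind_{\{\cdots\}}\mid\nabla X(t)=0]\,p_{\nabla X(t)}(0)\,\mathrm d\nu(t)$; the absolute value is dropped because $-\nabla^2X(t)$ is positive semidefinite at a maximum (the identity $\imath_1=\imath_4$ of Lemma~\ref{lem:selection_event}); and the Hessian regression \eqref{d:Rtecho} at $\nabla X(t)=0$, combined with the independence of $\nabla X(t)$, turns the integrand into $\E[\det(X(t)\mathrm{Id}-R(t))\,\ind_{\{\cdots\}}]\,p_{\nabla X(t)}(0)\det\Lambda_2(t)$. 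Absorbing $p_{\nabla X(t)}(0)\det\Lambda_2(t)\,\mathrm d\nu(t)$ into $\mathrm d\bar\nu(t)$ as in \eqref{e:sigmabar} isolates the determinant weight $\det(X(t)\mathrm{Id}-R(t))$, which I must then re-express in the rescaled coordinates $(\ell_1,s,r)$.

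The one genuinely new step is the factorisation of the law of $(\sigma X(t),\hat\sigma_t/\sigma,T_{2,t},\sigma R(t)/\hat\sigma_t)$. Here I would invoke the mutual independence recorded just before the statement: $X(t)$, $\nabla X(t)$, the pair $(\lambda_2^t/\hat\sigma_t,\,R(t)/\hat\sigma_t)$, and $\hat\sigma_t$ are independent. This rests on Proposition~\ref{prop:am} and on Lemma~\ref{lem:helix}, which realises $R(t)$ as radial limits of $X^{|t}(\cdot)$ so that $R(t)/\hat\sigma_t$ is carried by the law $\bar\mu_t$ of $(T_{2,t},\sigma R(t)/\hat\sigma_t)$. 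The resulting density with respect to $\leb_2\otimes\bar\mu_t$ is the one displayed in the text, $(\mathrm{const})\,s^{\kappa-1}\exp(-\kappa s^2/2)\,\varphi(\ell_1/\sigma)$, with $s=\hat\sigma_t/\sigma$ and $\ell_1=\sigma X(t)$. Substituting $X(t)=\ell_1/\sigma$ and $R(t)=\hat\sigma_t r/\sigma=s r$ into the Kac--Rice weight gives $\det(X(t)\mathrm{Id}-R(t))=\sigma^{-d}\det(\ell_1\mathrm{Id}-\sigma s r)$, whose scalar $\sigma^{-d}$ is swept into $(\mathrm{const})$.

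Finally I would exchange the $t$-integral with the integral against $\leb_2\otimes\bar\mu_t$ by Fubini (as in the last display of the proof of Theorem~\ref{thm:joint_law}) and carry the $\mathrm d\bar\nu(t)$ integration through the measures $\bar\mu_t$ to form $\bar\mu^\star=\int_M\bar\mu_t\,\mathrm d\bar\nu$, reading off the claimed density. The support indicator $\ind_{\{0<\sigma s t_2<\ell_1\}}$ is the constraint $0<\lambda_2^t<\lambda_1^t$ rewritten through $\lambda_2^t=\hat\sigma_t T_{2,t}=\sigma s t_2$ and $\lambda_1^t=\ell_1$, the upper bound $\lambda_2^t\le\lambda_1^t$ coming from $-\infty<\lambda_2\le\lambda_1<\infty$ of Lemma~\ref{lem:selection_event} and the lower bound from the construction of $\lambda_2^t$. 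The main obstacle is conceptual rather than computational: one must be certain that the determinant weight delivered by Kac--Rice is measurable with respect to, and correctly written in, the rescaled variables $(\ell_1,s,r)$---that is, that $R(t)$ is faithfully encoded in $\bar\mu_t$ via the helix identification of Lemma~\ref{lem:helix}---so that the factorisation against $\leb_2\otimes\bar\mu_t$ is legitimate; the remaining $\chi$-density bookkeeping and the $\sigma$-powers are routine.
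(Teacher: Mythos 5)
Your proposal follows essentially the same route as the paper: its own (very terse) argument is precisely the factorisation of the law of $(\sigma X(t),\hat\sigma_t/\sigma,T_{2,t},\sigma R(t)/\hat\sigma_t)$ against $\leb_2\otimes\bar\mu_t$ via the mutual independence supplied by Proposition~\ref{prop:am} and the helix identification of Lemma~\ref{lem:helix}, followed by ``the same method as for the proof of Theorem~\ref{thm:joint_law}'' with the Kac--Rice weight rewritten as $\sigma^{-d}\det(\ell_1\mathrm{Id}-\sigma s r)$ and the $t$-integral collected into $\bar\mu^\star$. One minor caveat: the lower bound in the indicator $\ind_{\{0<\sigma s t_2<\ell_1\}}$ does not actually follow from ``the construction of $\lambda_2^t$'' (Lemma~\ref{lem:helix} shows $\lambda_2^t$ ranges over all of $\R$); this is inherited from the paper's own statement and does not affect the structure of the argument.
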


\subsection{t-Spacing test, an exact test for the unknown variance case}
\label{sec:tst}
\noindent
We have the second main result, when the variance is unknown.
\nopagebreak
 \begin{theorem}
 \label{thm:rice_unknown_variance} 
 Let $X(\cdot)$ be a Gaussian random field satisfying Assumption $\bf (A_{1\text{-}5})$. For all $r\in\mathcal S_d$, define $  H_r$~as 
\eq
\label{eq:t_spacing}
  \forall \ell\in\bbR,\quad
   H_r(\ell) := \int_{\ell} ^{+\infty}\!\!\!\!\!\det(u  \I- r )\,
   f_{m-1}\Big(u \sqrt{({m-1})/{{ \kappa}}} \Big)\, \mathrm du\,,
\qe
  where $f_{m-1}$ is the density of the Student $t$-distribution with $m-1$ degrees of freedom. Under the null~\eqref{e:H0}, the test statistic  
\[
   \mathbf{T}(T_1,T_2,\Omega/\hat\sigma) :=\frac{ H_{\Omega/\hat\sigma} (T_1)}{H_{\Omega/\hat\sigma} (T_2)}\sim\mathcal U(0,1)\,,
\]
  where $T_1\,$, $T_2\,$, $\Omega$ are given by \eqref{d:tt}. 
\end{theorem}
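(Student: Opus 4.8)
The plan is to mimic the argument of Theorem~\ref{thm:rice_known_variance}, but now working with the four-way joint density furnished by Proposition~\ref{prop:Rice_density_unknown_variance} rather than the three-way density of Theorem~\ref{thm:joint_law}. The key observation is that the Studentized statistic $\mathbf{T}$ is built so that the nuisance parameter $\hat\sigma$ (equivalently $s=\hat\sigma/\sigma$) is integrated out, and the remaining conditional law of $T_1$ given $(T_2,\Omega/\hat\sigma)$ turns out to be free of the conditioning variables. First I would fix $(T_2,\Omega/\hat\sigma)=(t_2,r)$ and compute the conditional density of $T_1=\lambda_1/\hat\sigma$ from the joint density in Proposition~\ref{prop:Rice_density_unknown_variance}. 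The main computation is a change of variables together with an integration over the scale variable $s$: starting from the density proportional to $\det(\ell_1\I-\sigma s r)\,s^{\kappa-1}\exp(-\kappa s^2/2)\,\phi(\ell_1/\sigma)\ind_{\{0<\sigma s t_2<\ell_1\}}$, I would substitute $\ell_1=\sigma s T_1$ (so that $T_1=\ell_1/\hat\sigma=\ell_1/(\sigma s)$) and integrate against the $\chi$-type weight in $s$.

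The crux is to show that this marginalization over $s$ reproduces exactly the integrand defining $H_r$, namely $\det(u\I-r)\,f_{m-1}\!\big(u\sqrt{(m-1)/\kappa}\big)$ evaluated at $u=T_1$. Concretely, after the substitution $\ell_1=\sigma s u$ the determinant becomes $\det(\sigma s u\,\I-\sigma s r)=(\sigma s)^d\det(u\I-r)$, and collecting the $s$-dependent factors $(\sigma s)^d\,\phi(su/\,\cdot\,)\,s^{\kappa-1}\exp(-\kappa s^2/2)$ and integrating over $s\in(0,\infty)$ produces, up to a constant independent of $u$ and $r$, the Student-$t$ density $f_{m-1}$ at the rescaled argument $u\sqrt{(m-1)/\kappa}$. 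Here I would use the standard fact that a Gaussian divided by an independent (scaled) $\chi$ with $\kappa$ degrees of freedom has a Student-$t$ law, together with the identity $m-1=\kappa+d$ linking the Karhunen-Lo\`eve order $m$, the dimension $d$, and $\kappa=m-d-1$; the factor $\sqrt{(m-1)/\kappa}$ is exactly the normalization needed because the effective number of Gaussian degrees of freedom contributing to the $t$-statistic is $m-1$ rather than $\kappa$. This step is where one must keep careful track of the powers of $s$ and the constants, but every factor not depending on $u$ cancels in the ratio $\mathbf{T}$, so only the functional form matters.

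Once the conditional density of $T_1$ given $(T_2,\Omega/\hat\sigma)=(t_2,r)$ is identified as proportional to $\det(u\I-r)\,f_{m-1}\big(u\sqrt{(m-1)/\kappa}\big)\ind_{\{u\geq t_2\}}$, the conditional cumulative distribution function of $T_1$ on $[t_2,\infty)$ is precisely $1-H_r(T_1)/H_r(t_2)$, since $H_r(\ell)$ is the upper tail of exactly this (unnormalized) density and the lower limit of integration in the denominator is $T_2=t_2$. Conditionally on $(T_2,\Omega/\hat\sigma)=(t_2,r)$ the quantity $H_r(T_1)/H_r(t_2)$ is therefore one minus a continuous CDF evaluated at its own argument, hence uniform on $(0,1)$ by the probability integral transform, exactly as in the proof of Theorem~\ref{thm:rice_known_variance}. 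The final step is the same decoupling observation: the conditional law $\mathcal U(0,1)$ does not depend on $(t_2,r)$, so it coincides with the unconditional law of $\mathbf{T}(T_1,T_2,\Omega/\hat\sigma)$, which proves the claim.

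\textbf{Main obstacle.} The delicate point I expect is the $s$-integration that turns the Gaussian$\times\chi$ structure into the Student-$t$ density $f_{m-1}$ with the correct degrees of freedom and the correct scaling factor $\sqrt{(m-1)/\kappa}$. One must verify that it is $m-1$ (not $\kappa$) degrees of freedom that appear, which hinges on recognizing that the extra $(\sigma s)^d$ coming from the determinant effectively augments the Gaussian exponent, shifting the degrees of freedom from $\kappa$ to $\kappa+d=m-1$; getting this bookkeeping right, while confirming that all $u$- and $r$-independent prefactors cancel in the ratio, is the heart of the argument.
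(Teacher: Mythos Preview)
Your proposal is correct and follows essentially the same route as the paper's proof: start from the joint density of Proposition~\ref{prop:Rice_density_unknown_variance}, perform the change of variables $\ell_1=\sigma s\,t_1$, pull out $(\sigma s)^d$ from the determinant, integrate in $s$ to produce the Student-$t$ density $f_{m-1}$ at $t_1\sqrt{(m-1)/\kappa}$, and conclude by the probability integral transform exactly as in Theorem~\ref{thm:rice_known_variance}. One small bookkeeping remark: when you list the $s$-dependent factors as $(\sigma s)^d\,\phi(su)\,s^{\kappa-1}\exp(-\kappa s^2/2)$ you are missing the Jacobian $\sigma s$ from $\ell_1\mapsto t_1$, which supplies the extra power of $s$ needed to reach $s^{d+\kappa}=s^{m-1}$; with that correction the identification with the $\chi_{m-1}/\sqrt{m-1}$ weight and hence with $f_{m-1}$ goes through exactly as you anticipate.
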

\begin{proof}
First, using Proposition \ref{prop:Rice_density_unknown_variance} and the change of variable $t_1 = {\ell_1}/{\sigma s}$, we get that the joint density of the quadruplet $(T_1, { \hat{ \sigma}}/{\sigma},T_2, {\Omega}/{ \hat \sigma})$ at point $(t_1,s,t_2,r)$ is given by
\begin{align*}
& (\mathrm{const}) \det(\sigma s (t_1 \I  - r))\, s^{{ \kappa}} 
\exp(-( {{ \kappa}\,s^2}/2))\, \phi(s t_1) \ind_{\{0<t_2 <t_1\}} \\
=&(\mathrm{const}) \det(t_1 \I  - r) \,s^{m-1} 
\exp\Bigg[-\Bigg(s \sqrt{\frac{{ \kappa}}{m-1}}\Bigg)^2 \frac{m-1}{2}\Bigg]\, \phi(s t_1) \ind_{\{0<t_2 <t_1\}}.
\end{align*}
Second, note that if $U$ and $V$ are independent with densities $f_U$ and $f_V$ then ${W}:=\frac{U}{V}$ has density
\[
f_{{W}}(w) = \int_{\R} f_U(w v) v f_V(v) \mathrm{d}v\,.
\]
In our case, integrating over $s$ and with the change of variable $s\leftarrow s\sqrt{{ \kappa}/(m-1)}$, it holds
\begin{align*}
& \int_{\mathds  {R}^+} \phi(s t_1)\, s^{m-1} 
\exp\Bigg[-\Bigg(s \sqrt{\frac{{ \kappa}}{m-1}}\Bigg)^2 \frac{m-1}{2}\Bigg]\, \mathrm{d}s \\
&= (\mathrm{const}) \int_{\mathds  {R}^+} \phi\Bigg(s t_1 \sqrt{\frac{m-1}{{ \kappa}}} \Bigg) \, s\, { s^{m-2} \exp\left[-\frac{s^2 (m-1)}{2}\right]}\, \mathrm{d}s \\
&= (\mathrm{const})\int_{\mathds  {R}^+} \phi\Bigg(s t_1 \sqrt{\frac{m-1}{{ \kappa}}} \Bigg) \, s\,  f_{\frac{\chi_{m-1}}{\sqrt{m-1}}}(s) \, \mathrm{d}s \\
&= (\mathrm{const}) \, f_{m-1}\Bigg(t_1 \sqrt{\frac{m-1}{{ \kappa}}} \Bigg).
\end{align*}
Putting together, the density of $(T_1,T_2,\Omega/\hat{\sigma})$ at point $(t_1,t_2,r)$ is now given by
\[
(\mathrm{const})\det(t_1\I_d  - r)\, f_{m-1}\Big( t_1 \sqrt{\frac{m-1}{{ \kappa}}} \Big) \ind_{\{0<t_2 <t_1\}}\,,
\]
and we conclude using the same trick as the one of Theorem \ref{thm:rice_known_variance}.
\end{proof}

\begin{figure}[!ht]
\includegraphics[height=6cm]{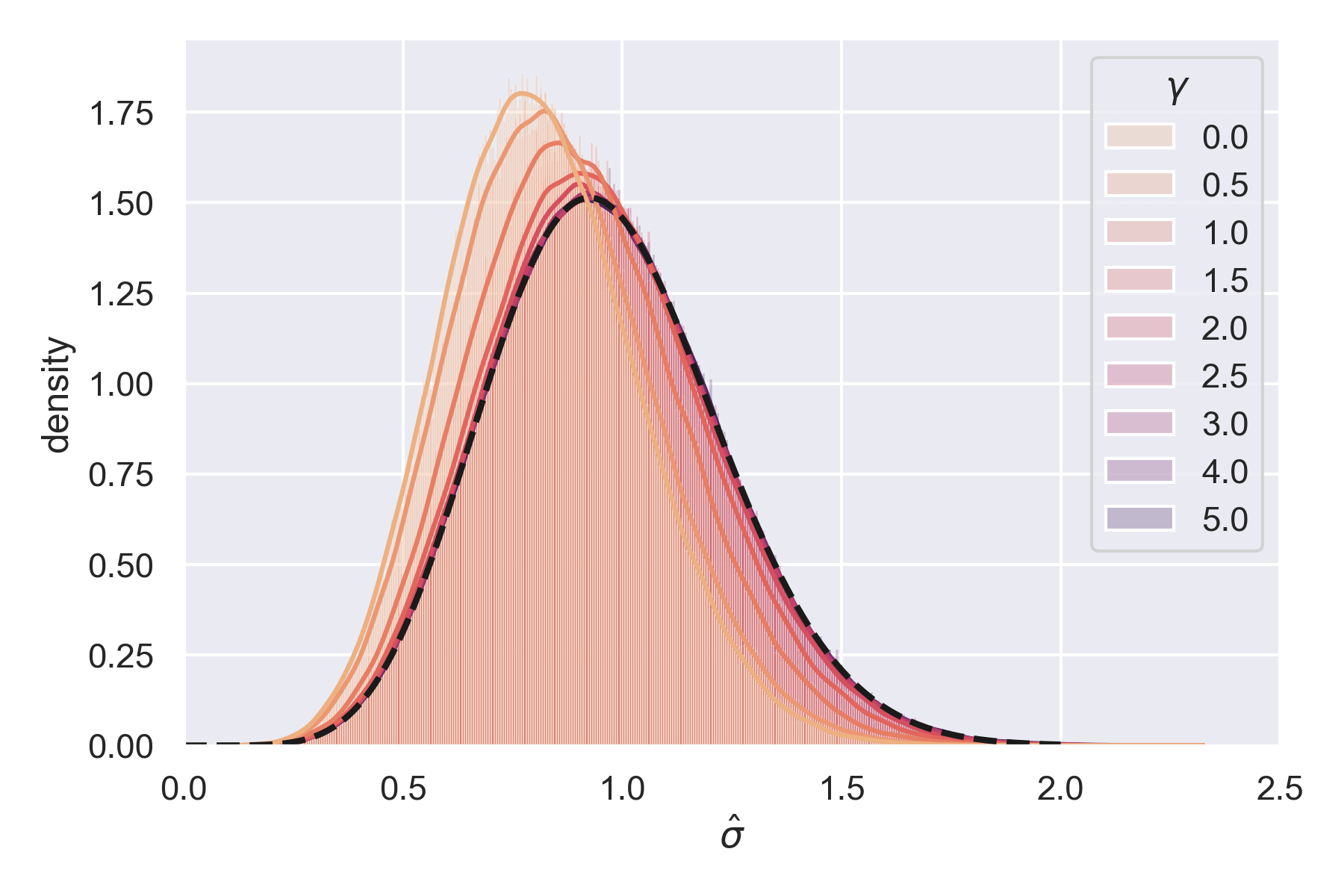}
\caption{
{\bf [Spiked tensor PCA, Section~\ref{sec:spike_tensor_model}, example 4/4]} The variance estimator is not distributed according to $\chi^2$-distribution with~$\kappa=7$ degrees of freedom, it underestimates the variance. The dashed black line is the distribution of a $\chi(7)/\sqrt{7}$, and $\sigma=1$ in these experiments. The probability distribution function is estimated over $250,000$ Monte-Carlo samples for each value of $\gamma$.
}\label{fig:sigma}
\end{figure}

\begin{remark}[On the variance estimator]
The formula \eqref{eq:t_spacing} involves the Student density function with~$m-1$ degrees of freedom which necessitates several comments: 
\begin{itemize}
    \item First, this formula shows that ${\kappa}\,{\hat \sigma^2}/{\sigma^2}$ (resp. $({m-1})\,{\hat \sigma^2}/{\sigma^2}$) fails to be distributed according to a $\chi^2$-distribution with $\kappa$ (resp. $m-1$) degrees of freedom. This because $t_1$ is random. 
    \item Second, Figure~\ref{fig:sigma} illustrates that ${\hat \sigma^2}$ under-estimates the variance $\sigma^2$ under the null ($\gamma=0$) or moderate alternatives ($\gamma\simeq 1$). The reason for this is clear: the chi-squared distribution assumes that the model is pre-specified, not chosen on the basis of $\sigma X(\cdot)$. But the Continuous LARS procedure has deliberately chosen the strongest predictor $\psi_{t_1}$, with $t_1\in M$ maximum of $Z(\cdot)$, among all of the available choices, so it is not surprising that it yields a drop in the value of the variance estimate on the residuals $Z^{|t_1}(\cdot)$.
    \item Third, Figure~\ref{fig:sigma} shows that ${\kappa}\,{\hat \sigma^2}/{\sigma^2}$ has almost $\chi^2$-distribution with $\kappa$ degrees of freedom for large alternatives~($\gamma=5$). The reason for this is clear: the mean $m(\cdot)$ is much larger than noise $\sigma X(\cdot)$ hence $t_1\simeq t_0$  and $\kappa\hat\sigma^2/\sigma^2\simeq\kappa\hat\sigma_{t_1}/\sigma^2$ which has $\chi^2$-distribution with $\kappa$ degrees of freedom. 
\end{itemize}
\end{remark}

\section{Examples}
\label{sec:examples}

\begin{table}[!ht]
    \centering
    \label{tab:applications_summary}
    \begin{tabular}{@{}llcc@{}}
        \toprule
        \textbf{Application} & \textbf{Manifold} $\boldsymbol{M}$ & \textbf{Dim.} $\boldsymbol{d}$ & \textbf{Section} \\
        \midrule
        Spiked Tensor PCA (Rank-1) & Sphere $\mathds S^2$ & $2$ & \ref{sec:spike_tensor_model} \\
        Two-Spiked Tensor Model & $\mathds S^1 \times \mathcal{M}_{n,2}$ (Stiefel) & $2n-2$ & \ref{sec:two-spiked} \\
        Super-Resolution & Torus $\mathbb{T}^2 = [0, 2\pi)^2$ & $2$ & \ref{sec:SR} \\
        2-Layer Neural Networks & $\mathds S^{r-1} \times (\mathds S^{n-1})^r$ & $nr-1$ & \ref{sec:neural-net} \\
        \bottomrule
        \\
    \end{tabular}
    \caption{Summary of applications considered in this paper. We specify the underlying manifold $M$ and its dimension $d$.
    }
\end{table}

\subsection{How to deploy our method}

We are given an observation $Y\in E$ as in Equation \eqref{eq:def_Y_regression} and we compute the corresponding profile likelihood random field given by $Z(t)=\langle Y,\Psi_t\rangle_E$. We would like to test the global nullity of its mean~$m(\cdot)$ given by hypothesis~\eqref{e:H0}. The testing framework is depicted in Section~\ref{sec:testing_framework} and we recall that
\eq
\label{eq:Omega_examples}
\Omega=\Lambda^{-\frac12}_2(t_1)\,(\nabla^2Z(t_1) +\lambda_1\Lambda_2(t_1))\,\Lambda^{-\frac12}_2(t_1)\,.
\qe
The first and second maximum (and their arguments) are given by \eqref{def:hatz}, which are computed using a Riemannian gradient descent algorithm on $M$. We will give the expression of $\Lambda_2(t)$ in each example together with a closed form expression to compute the Riemannian Hessian from the Euclidean Hessian and the Euclidean gradient. The Euclidean Hessian can be computed using numerical differentiation or, in some cases, is given explicitly.

When the variance $\sigma^2$ is known, the testing statistics is given by Theorem~\ref{thm:rice_known_variance}. In particular, one needs to compute
\eq
\label{eq:G_examples}
    G_{\Omega/\sigma}(\ell)=\int_{\ell}^{+\infty}\!\!\!\!\!\!\det ( u \mathrm{Id}-\Omega/\sigma)    \phi(u )\, \mathrm{d}u\,.
\qe
When the variance $\sigma^2$ is unknown, the testing statistics is given by Theorem~\ref{thm:rice_unknown_variance}. In particular, one needs to compute
\eq
\label{eq:H_examples}
H_{{\Omega/\hat\sigma}}(\ell) = \int_{\ell} ^{+\infty}\!\!\!\!\!\det(u  \I- {\Omega/\hat\sigma} )\,
   f_{m-1}\Big(u \sqrt{({m-1})/{{ \kappa}}} \Big)\, \mathrm du\,,
\qe
where we recall that $f_{m-1}(\cdot)$ is the density of the Student $t$-distribution with $m-1$ degrees of freedom. Numerical integration can be done but, in some cases, we give explicit expressions of~$G_{\Omega/\sigma}(\ell)$ and $H_{{\Omega/\hat\sigma}}(\ell)$.

As for the variance estimation, we draw $\kappa$ independent points $(t_{d+2}, \ldots,t_m)$ uniformly on~$M$. They generically satisfy Condition \eqref{a:nonDegeneratem+}. For a putative point $t\in M$, the Gaussian vector $( X^{|t}(t_{d+2}), \ldots, X^{|t}(t_{m}))$ has for variance-covariance matrix $ \sigma^2 \Sigma$ where~$\Sigma$ is some known  matrix  and an estimator of $\sigma^2$ is given by \eqref{def_sigma_mat} with $t=t_1$ as in \eqref{d:tt}. In our experiments, we have drawn $5$ independent samples on $\kappa$ points and we found the same value for $\hat\sigma$, as shown by the theory \eqref{eq:sigma_t}.

\subsection{Spiked tensor PCA}
\label{sec:spike_tensor_model}
We consider a simple example of the detection of a rank one $3$-way symmetric tensor observing $Y=\lambda_0 t_0^{\otimes 3}+\sigma W$ where $ W $ is an order $3$ symmetric tensor defined using symmetry and the following independent terms,
\begin{align*}
W_{iii} &\text{ of variance } 1; \text{ there are } 3 \text{ principal diagonal terms}, i\in[3]\,, \\
 W_{iij} &\text{ of variance } 1/3;\text{ there are } 6 \text{ sub-diagonals terms},  i\neq j\in[3]\,,\\
W_{123} &\text{ of variance } 1/6;\text{ there is }  1 \text{ off-diagonal term}\,.
\end{align*}
The profile likelihood is given by $Z(t)=\langle Y,t^{\otimes 3} \rangle_E =\lambda_0 \langle t_0, t\rangle^3 + \sigma X(t)$ where $ M=  S^2$ is the $2$-sphere and $E$ is the Euclidean space of $3$-way symmetric tensors of size $3\times3\times3$ denoted by $(E,\langle\cdot,\cdot\rangle_E)$ with dimension $m=10$. Hence, we have 
\begin{align*}
\psi_t  &:= t^{\otimes 3} =(t_i t_j t_k)_{1\leq i,j,k\leq 3}\,,\\
c(s,t)  &:= \langle s,t\rangle^3\,,\\
Z(t)    &:= \langle Y,\psi_t\rangle_E=\sum_{ijk} t_i t_j t_k Y_{ijk}\,.
\end{align*}
The reader may consult \url{https://github.com/ydecastro/tensor-spacing/} for further details on the numerical experiments. Figure~\ref{fig:eigenvector} illustrates the geometric intuition behind the second maximum in this context. The left panel displays the original random field $X(\cdot)$. The middle and right panels depict the conditional field $X^{|t_1}(\cdot)$, which represents the regression remainder of $X(\cdot)$ with respect to the value and gradient at the maximizer $t_1$. Crucially, the volumetric view (right panel) reveals that $X^{|t_1}(\cdot)$ exhibits a singularity at $t_1$, forming a structure we refer to as the \textit{helix}. This singularity arises because the limit of the conditional field at $t_1$ depends on the direction of approach, a phenomenon theoretically characterized in Lemma~\ref{lem:helix} (see also Remark~\ref{rem:helix}).

\subsubsection*{Hessian}
To compute  gradient and  Hessian  we can take advantage of  the isometry  of the  problem and consider, without loss of generality, the case where $t\in M$ is the so-called ``north pole'': $(0, 0,1)$. We have 
\[
Z'(0,0,1)  =  \left(\begin{array}{c} 3 Y_{133} \\ 3 Y_{233} \\3 Y_{333} 
\end{array}\right)\,.
\]
This shows that  $\Lambda_2=3\,\I_2$ (on the tangent space), hence 
\[
\Omega=\sigma R(t_1)=\frac\sigma3\tilde R(t_1)\,.
\]
Since the second fundamental form  of the unit sphere is $- \I$, the Riemannian Hessian is equal  to the  Euclidean Hessian $Z''(t)$  limited to the tangent space $-\I$  times the normal derivative (oriented outwards). The  Euclidean Hessian $Z''(t)$  limited to the tangent space is given by
 
 \[
 \left(\begin{array}{cc}2 Y_{113}& Y_{123} \\Y_{123} & 2Y_{223}\end{array}\right)\,.
 \]
 This shows that the gradient is independent from the Hessian. To compute the independent part~$R$ we can use  
\[
  Z(0, 0,1) = Z'_3(0, 0,1) =   Y_{333}  \quad \mbox{ and }   \quad Z'_3(0, 0,1)= Y_{333}\,.
\]
This shows that~$R(t)$ is always  equal to the Euclidean Hessian restricted to  the tangent space. By~\eqref{def:R}, it yields
\[
\Omega=\frac{\sigma} 3\tilde R(t_1)=\frac{\sigma} 3(\I-\Pi_{t_1})\,Z''(t_1)\,(\I-\Pi_{t_1})\,,
\]
where $\Pi_t:=tt^\top$ is the orthogonal projection onto the normal space at point $t$, and direct algebra~gives
\eq
\label{eq:Eucl_Hessian_tensor}
\frac{\partial^2Z }{ \partial t^i \partial t^j}(t) =  6 \sum_{k=1}^3 Y_{ijk}t^k\,.
\qe

\subsubsection*{Test statistics} One can compute the statistics $(t_1,\lambda_1)$ and $(t_2,\lambda_2)$ using a gradient descent. The expression of $\Omega$ has an explicit form by~\eqref{eq:Eucl_Hessian_tensor}. Because we deal with $2\times 2$ matrices, we have for the spacing test the following identities, 
\begin{align*}
 G_{\Omega/\sigma}(\ell) 
 	&= 	\int_\ell^{+\infty}\!\!\!\!\! \!\det(u \I_2-\Omega/\sigma) \phi(u)\,\mathrm du \\
 	&=   \int_\ell^{+\infty}\! \!\!\!\!\! \big[ (u^2-1)  - \mathrm{Tr} (\Omega/\sigma)  u  +\det(\Omega/\sigma) +1 \big]\phi(u)\, \mathrm du \\
 	&= 	\ell \phi(\ell) - \mathrm{Tr}(\Omega/\sigma) \phi(\ell) + (\det(\Omega/\sigma) +1 )(1 -\Phi(\ell))\,,
\end{align*}
with
\begin{align*}
\det(\Omega/\sigma)  
	& =(1/9\sigma^2)\det \big[ (\I_3-\Pi_{t_1}) Z''(t_1) (\I_3-\Pi_{t_1}) +\Pi_{t_1} \big]\\
 \mathrm{Tr}(\Omega/\sigma) 
 	& = (1/3\sigma)\big({\mathrm{Tr}(Z''(t_1))-t_1^\top Z''(t_1)t_1}\big)\,.
\end{align*}

\noindent
As for the $t$-spacing test, one can check that 
\begin{align*}
 H&_{\Omega/\hat \sigma}(\ell) 
 	= 	\int_\ell^{+\infty}\!\!\!\!\! \!\det(u \I_2-\Omega/\hat \sigma)\, f_{m-1}\Big(u \sqrt{({m-1})/{{ \kappa}}} \Big)\, \mathrm du \\
 	&=   \int_\ell^{+\infty}\! \!\!\!\!\! \big[ u^2  - \mathrm{Tr} (\Omega/\hat \sigma)  u  +\det(\Omega/\hat \sigma)  \big]\,f_{m-1}\Big(u \sqrt{({m-1})/{{ \kappa}}} \Big)\, \mathrm du \\
 	&= 	\frac{\kappa\sqrt{m-3}}{(m-2)\sqrt{m-1}} 
		\frac{\Gamma(\frac m2)\Gamma(\frac{m-3}2)}{\Gamma(\frac{m-1}2)\Gamma(\frac{m-2}2)}
		\sqrt{\frac{\kappa}{m-3}}\times\\
    &\quad
		\bigg[
		\ell\sqrt{\frac{m-3}{\kappa}}f_{m-3}(\ell\sqrt{\frac{m-3}\kappa})+1 -F_{m-3}(\ell\sqrt{\frac{m-3}\kappa})
		\bigg]
		\\
	&\quad	- \mathrm{Tr}(\Omega/\hat \sigma) \frac{\kappa\sqrt{m-3}}{(m-2)\sqrt{m-1}}
			\frac{\Gamma(\frac m2)\Gamma(\frac{m-3}2)}{\Gamma(\frac{m-1}2)\Gamma(\frac{m-2}2)}f_{m-3}(\ell\sqrt{\frac{m-3}\kappa}) \\
	&\quad	+ \det(\Omega/\hat \sigma)\sqrt{\frac{\kappa}{m-1}}\Big(1 -F_{m-1}(\ell\sqrt{\frac{m-1}\kappa})\Big)\,,
\end{align*}
with $m=10$, $\kappa=7$, $F_{\alpha}$ (resp. $f_\alpha$) the cumulative distribution function (resp. distribution density) of the Student $t$-distribution with $\alpha$ degrees of freedom and 
\begin{align*}
\det(\Omega/\hat\sigma)  
	& =(1/9\hat\sigma^2)\det \big[ (\I_3-\Pi_{t_1}) Z''(t_1) (\I_3-\Pi_{t_1}) +\Pi_{t_1} \big]\\
 \mathrm{Tr}(\Omega/\hat\sigma) 
 	& = (1/3\hat\sigma)\big({\mathrm{Tr}(Z''(t_1))-t_1^\top \sigma X''(t_1)t_1}\big)\,.
\end{align*}

\subsubsection*{The $t$-spacing test} 
In Figures \ref{fig:cdf_spacings} and \ref{fig:lamddas}, the alternative is given by $\lambda_0=\gamma \times\sigma\sqrt{3\log 3+3\log\log 3}$ where $\gamma=1$ corresponds to the so-called phase transition in Spiked tensor PCA as presented in \citet[Theorem 1.3]{perry2020statistical}. In Figure~\ref{fig:lamddas}, the top right panel shows that $\lambda_1$ is stochastically increasing as $\gamma$ increases while the top right panel shows that the distribution of $\lambda_2$ remains unchanged for moderate values of $\gamma$, say $\gamma\geq 2$. It illustrates that the spacing between $\lambda_1$ and $\lambda_2$ grows linearly with~$\gamma$. In the moderate regime, the bottom left panel shows that $\lambda_1$ is distributed as a Gaussian with mean $\lambda_0$ and variance~$\sigma^2$ (dashed black line), which is the distribution of~$Z(t_0)$ under $\mathds H_1(t_0,\lambda_0)$. The bottom right panel shows that $t_1\simeq t_0$, for moderate values of~$\gamma$. It illustrates that $(t_1,\lambda_1)$ is weakly close to the distribution of $(t_0,Z(t_0))$, the ($t$-)spacing tests detect the alternative $\mathds H_1(t_0,\lambda_0)$.

\subsection{Two-spiked tensor model}
\label{sec:two-spiked}
We consider a generalization to higher dimensions and two-spiked tensors of the preceding example (Section~\ref{sec:spike_tensor_model}) by 
\[
Y=\nu_{0,1} x_{0}^{\otimes k}+\nu_{0,2} y_{0}^{\otimes k}+\sigma W\,,
\]
where: 
\begin{subequations}
\begin{itemize}
    \item the Euclidean space $E$ is given by $k$-way $n$-dimensional symmetric tensors ($k\geq 3, n\geq 4$) equipped with the dot product
        \eq
            \label{e:dot_product}
            \forall  \ \mathbf T,\mathbf U\in(\R^{n})^{\otimes k}\,,\quad \langle\mathbf T,\mathbf U\rangle_E=\sum_{i_1,\ldots,i_k\in[n]}T_{i_1,\ldots,i_k}U_{i_1,\ldots,i_k}\,,
        \qe
    with Euclidean or Frobenius norm $\|\mathbf T\|^2= \langle\mathbf T,\mathbf T\rangle_E$, where  $[n]= 1,\ldots,n$;
    \item the noise tensor $W\in(\R^{n})^{\otimes k}$ is defined by  
        \eq
            \label{e:noise_tensor}
            W\:= \frac1{k!} 
            \sum_{\pi \in \Perm_n} G^{\pi}\,,
        \qe 
    where $G_{i_1 \cdots i_k}$ for ${1\leq i_1, \ldots, i_k \leq n}$ are i.i.d standard Gaussians, $\Perm_n$ is the set of permutations 
    of the set~$[n]$, and $G^{\pi}_{i_1\cdots i_k} = G_{\pi(i_1) \cdots \pi(i_k)}$. Note that the entries of $W$ with indices $ i_1< i_2<\cdots< i_k$  
    form an i.i.d. collection of Gaussian random variables, namely $\{W_{i_1\cdots i_k}\}_{i_1< i_2<\cdots< i_k}$ with distribution $\mathcal 
    N(0,1/(k!))$.
    \item the eigenvectors $x_0$ and $y_0$ are normalized and orthogonal, they belong to the Stiefel manifold $ {\mathcal M}_{n,2}$ given by  
    \[
        {\mathcal M}_{n,2} := \{ (x,y)\in(\mathds S^{n-1})^2:  x \perp y\}.
    \]
\end{itemize}
\end{subequations}
Using the framework of Section \ref{s:kernel}, we are led to consider the profile likelihood random field 
\begin{align*}
    t       &= (\theta, x,y)\in [0, 2\pi)\times{\mathcal M}_{n,2}\\ 
    \psi_t  &= \cos(\theta)  x^{\otimes k} + \sin(\theta) y^{\otimes k}\\
    Z(t)    &= \big\langle \psi_t, Y\big\rangle_E  = \cos(\theta) \langle x^{\otimes k}, Y \rangle_E+ \sin(\theta) \ \langle y^{\otimes k}, Y \rangle_E\,.
\end{align*}
So the relevant manifold is $M= \mathds S^{1}\times {\mathcal M}_{n,2}$, where the circle $S^1$ is represented by $[0, 2\pi)$. The dimension of $\mathrm{M}$ is $2n-2$. Hence we uncover 
\[
    Y=\lambda_{0} \psi_{t_{0}}+\sigma W\,,
\]
with $\lambda_0=\sqrt{\nu_{0,1}^2+\nu_{0,2}^2}$ and $\theta=\arccos\big({\nu_{0,1}/\lambda_0}\big)$. 

\medskip

\noindent
The covariance function at points $s=(\theta',u,v)$ and $t=(\theta,x,y)$ is given by
\begin{subequations}
\begin{align} 
    \label{e:covF0}
    c(s,t) &= \E\big( Y(t) Y(s)\big) \notag \\ 
    &= \cos(\theta)\cos(\theta') \langle u^{\otimes k},x^{\otimes k} \rangle_E 
    + \sin(\theta) \sin(\theta') \langle v^{\otimes k},y^{\otimes k} \rangle_E\notag \\ 
    &= \cos(\theta)\cos(\theta') \langle u,x \rangle^k 
    + \sin(\theta) \sin(\theta') \langle v,y \rangle^k\,.
\end{align}
In particular, it follows that $c(t,t)=1$ for all $t\in {\mathrm{M}}$ and 
\begin{equation} \label{e:invariancia_c}
c((\theta',u,v),(\theta,x,y)) = c((\theta',Uu,Uv),(\theta,Ux,Uy)), 
\end{equation}
for any orthogonal map $U$ in $\mathds {R}^n$.
\end{subequations}

\subsubsection*{Let us compute the matrix $\Lambda_2(t)$.}

Note that because of the partial invariance by isometry given by \eqref{e:invariancia_c}, $ \Lambda_2(t)$ depends on $\theta$ only. 
Let us define for distinct $ j \neq i,\ell\neq i$:
\begin{align*}
    Y_{[i]} &:= Y_{i,\ldots,i},  \  \mbox{ with variance }  1 ;\\
Y_{[i],j} &:= Y_{i,\ldots,i, j}+ \cdots +  Y_{j, i,\ldots,i}  = k Y_{i,\ldots,i, j},  \mbox{ with variance }  k; \\
Y_{[i],j,\ell} &:= \sum Y_{i_1,i_2,i_3} \mbox{ with } k-2 \mbox{ occurrences of } i, \mbox{ one }j \mbox{ and  one } \ell,\\
&\quad\mbox{with variance }  k(k-1); \\
Y_{[i],j,j}    & \mbox{ has variance }   \frac{k(k-1)}{2}\,.
\end{align*}
Because of the independence properties within $Y$, all these variables are independent. We consider the profile likelihood random field at point $(\theta,e_1,e_2)$ where $e_1$ and $e_2$ are the first two elements of the canonical basis. We have 
\[
 Z(\theta,e_1,e_2) = \cos(\theta) Y_{[1]}  + \sin(\theta) Y_{[2]}. 
\]
The Euclidean gradient at $(\theta,e_1,e_2)$ is given by 
\begin{align*}
  \frac{\partial Z (\theta,e_1,e_2) }{\partial x_1}     &=  k\cos(\theta) Y_{[1]}, 
  &
   \frac{\partial Z (\theta,e_1,e_2) }{\partial y_2}    &=  k \sin(\theta) Y_{[2]},  
  \\
 \frac{\partial Z (\theta,e_1,e_2) }{\partial x_j}      &= \cos(\theta) Y_{[1],j}, j  \neq 1, 
 &
  \frac{\partial Z (\theta,e_1,e_2) }{\partial x_j}     &= \sin(\theta) Y_{[2],j} ,j  \neq 2\,,
\end{align*}
and 
\[
    \frac{\partial Z (\theta,e_1,e_2) }{\partial \theta} = -\sin(\theta) \  Y_{[1]}+ \cos(\theta) Y_{[2]}\,.
\]
Using the following orthonormal parameterization of the tangent space  of the Stiefel manifold:
\[
\left(\begin{array}{cc}
0 &   \frac{-\Delta_{21} } {\sqrt{2}
}\\
 \frac{\Delta_{21} }{\sqrt{2}}& 0 \\
\Delta_{31}&\Delta_{32}\\
\vdots &\vdots\\
\Delta_{n1}&\Delta_{n2}
\end{array}\right),
\]
we get that the Riemannian gradient, including $\theta$, is  given  by
\eq
\label{eq:grad_riem_two_spiked}
    \nabla Z  :=\left(\begin{array}{cc}
    \sin(\theta)  Y_{[1]} + \cos(\theta) Y_{[2]} & \cdot  \\
    \frac{\cos(\theta) Y_{[1],2} - \sin(\theta)  Y_{[2],1} }{\sqrt{2}} & \cdot \\
    \cos(\theta) Y_{[1],3}&\sin(\theta) Y_{[2],3}\\
    \vdots &\vdots \\
    \cos(\theta) Y_{[1],n}&s \  Y_{[2],n}
\end{array}\right)\,.
\qe
Hence, we deduce that
\begin{equation} 
\label{e:Lambda2-2spike}
  \Lambda_2(\theta) = \diag\big(1,k/2, \cos^2(\theta),\ldots,\sin^2(\theta),\ldots \big), 
\end{equation}
where $\cos^2(\theta)$ and $\sin^2(\theta)$ are repeated $n-2$ times.

\medskip

\begin{remark}
Observe that if $\theta = k \pi/2$ then the matrix $\Lambda_2(\theta)$ is singular. We prove in Lemma~\ref{lem:bulinsk} of Appendix~\ref{sec:appendix_two_spiked} that almost surely $\theta_1 \neq k \pi/2$ where $t_1=(\theta_1,x_1,y_1)$, hence $\Lambda_2(t_1)$ is non-singular.

Also, Theorems \ref{thm:rice_known_variance} or \ref{thm:rice_unknown_variance} can not be directly applied. 
But, 
one can retrieve these results by omitting an $\varepsilon$-neighborhood of these points in the {K}ac-{R}ice formula
and passing to the monotonic limit as $\varepsilon$ tends to $0$. 
\end{remark}

\subsubsection*{Let us compute the matrix $\Omega$}
Let us use for short $c$ and $s$ for $\cos(\theta)$ and $sin(\theta)$ respectively. We start by computing the Euclidean Hessian at $(\theta,e_1,e_2)$ as follows
\begin{align*}
\frac{\partial^2 Z (\theta,e_1,e_2) }{\partial x^2_1}               &=    c  k (k-1) Y_{[1]} ,  
&
 \frac{\partial^2 Z (\theta,e_1,e_2) }{\partial y_j \partial y_{j'}} &=    s \  Y_{[2],j,j'}, j,j'  \neq 2, \\
\frac{\partial^2 Z (\theta,e_1,e_2) }{\partial x_1\partial x_{j }}  &=    c  (k-1) Y_{[1],j} , j \neq 1, 
&
  \frac{\partial^2 Z (\theta,e_1,e_2) }{\partial \theta \partial x_1} &=    c   k Y_{[1]}, \\
\frac{\partial^2 Z (\theta,e_1,e_2) }{\partial x_j\partial x_{j' }} &=    c  Y_{[1]j,j'} , j,j' \neq 1, 
&
\frac{\partial^2 Z (\theta,e_1,e_2) }{\partial \theta \partial x_j} &=    c   k Y_{[1],j},j \neq 1, \\
\frac{\partial^2 Z (\theta,e_1,e_2) }{\partial y_2^2 }              &=    s \  k(k-1) Y_{[2]}, 
&
\frac{\partial^2 Z (\theta,e_1,e_2) }{\partial  \theta \partial y_{2}} &=    s \   k Y_{[2]}, \\
\frac{\partial^2 Z (\theta,e_1,e_2) }{\partial y_j \partial y_j'}   &=    s \  (k-1) Y_{[2],j}, j  \neq 2, 
&
\frac{\partial^2 Z(\theta,e_1,e_2) }{\partial \theta \partial y_{j}} &=    s \  Y_{[2],j}, j \neq 2, 
\end{align*}
and 
\[
\frac{\partial^2 Z (\theta,e_1,e_2) }{\partial \theta^2} =   -Z\,.
\]
Note that all the cross derivatives  between $ x$ and $ y$ vanish. The Riemannian Hessian consists of four parts 
(it can be checked by an order two Taylor expansion along the tangent space) namely
\begin{itemize}
\item The projected Euclidean Hessian: the Euclidean Hessian restricted to the tangent space;
\item For each of the three vectors, $V_1,V_2,V_3$, normal to the Stiefel manifold, 
the part of the second fundamental form associated to the vector multiplied by the normal derivative.
\end{itemize}
The 3 components of the second fundamental form are
\begin{align*}
    V_1 & = - \left(
                    \begin{array}{ccc}
                        1/2 & 0         & 0 \\
                        0   &  \I_{n-2} & 0 \\
                        0   & 0         & 0
 \end{array}\right)\\
    V_2 & = - \left(
                    \begin{array}{ccc}
                        1/2 & 0         & 0 \\
                        0   & 0         & 0 \\
                        0   & 0         & \I_{n-2}
 \end{array}\right)\\
    V_3 & = - \left(
                    \begin{array}{ccc}
                        0   & 0                             & 0 \\
                        0   & 0                             & \frac{1}{\sqrt{2}} \I_{n-2} \\
                        0   & \frac{1}{\sqrt{2}}  \I_{n-2}  & 0
                    \end{array}\right)\,.
\end{align*}
Hence, the expression of the Riemannian Hessian $\nabla^2Z(\theta,e_1,e_2)$ is
\[
   \left(\begin{array}{cccc}
 Z(\theta,e_1,e_2)& A& - s  \big(Y_{[1],j}\big)_{j>2}&  c   \big(Y_{[2],j}\big)_{j>2} \\
 \star & B & \frac{1}{\sqrt{2}}   c \big(Y_{[1],2,j} \big)_j &
 - \frac{1}{\sqrt{2}}    s  \big(Y_{[2],1,j} \big)_j\\
  \star & \star &  c \big(  (Y_{[1],j,j'})_{j,j'} - kY_{[1]} \I_{n-2} \big)&
  - \big(  c Y_{[1],2} + s  Y_{[2],1}\big)\I_{n-2}/2
  \\
 \star  & \star & \star  &  s  \big(  (Y_{[2],j,j'})_{j,j'} -k Y_{[2]} \I_{n-2}\big)
 \end{array}\right),
\]
with $A= \frac{1}{\sqrt{2}} \big( cY_{[2],1} - s  Y_{[1],2}\big)$,
and
$B = -\frac{1}{ 2}  \big( c  k Y_{[1]}   +  s  k Y_{[2]} \big) +  \frac{1}{2} \big(  cY_{[1],2,2} + s  Y_{[2],1,1}\big)$.

\medskip 

To obtain $\nabla^2 Z(t)$ for a generic point $(\theta,x,y)$ belonging to $\mathcal M_{n,2}$ 
we have to perform an isometry as in \eqref{e:invariancia_c}. Using \eqref{eq:Omega_examples}, we deduce the expression of $\Omega$.

\subsubsection*{Testing procedures}
The test statistics involve the function $G_{\Omega/\sigma}$ (resp. and $H_{\Omega/{\hat \sigma}}$), recalled in Equation~\eqref{eq:G_examples} (resp. and Equation~\eqref{eq:H_examples}), which is given using an integral. This integration can be done numerically.

\newpage


 
\bibliography{references_all} 

\newpage 

\appendix

\begin{center}
    {\Large\bf Appendix of Second Maximum of a Gaussian Random Field\\ and Exact ($t$-)Spacing test}
\end{center}
\medskip

\noindent
In this appendix, the Riemannian Hessians are represented by $2$-forms.

\section{Continued examples}

\subsection{Super resolution}
\label{sec:SR}

In Signal processing, the super resolution phenomenon is the ability to distinguish two close sources (Dirac masses) from noisy low frequency measurements given by an optical system. This issue can be tackled using sparse regularization on the space of measures using the so-called Beurling-LASSO, introduced by \citet{de2012exact,azais2015spike,candes2014towards,duval2015exact}.

\begin{figure}[!ht]
\begin{center}
\includegraphics[height=5.5cm]{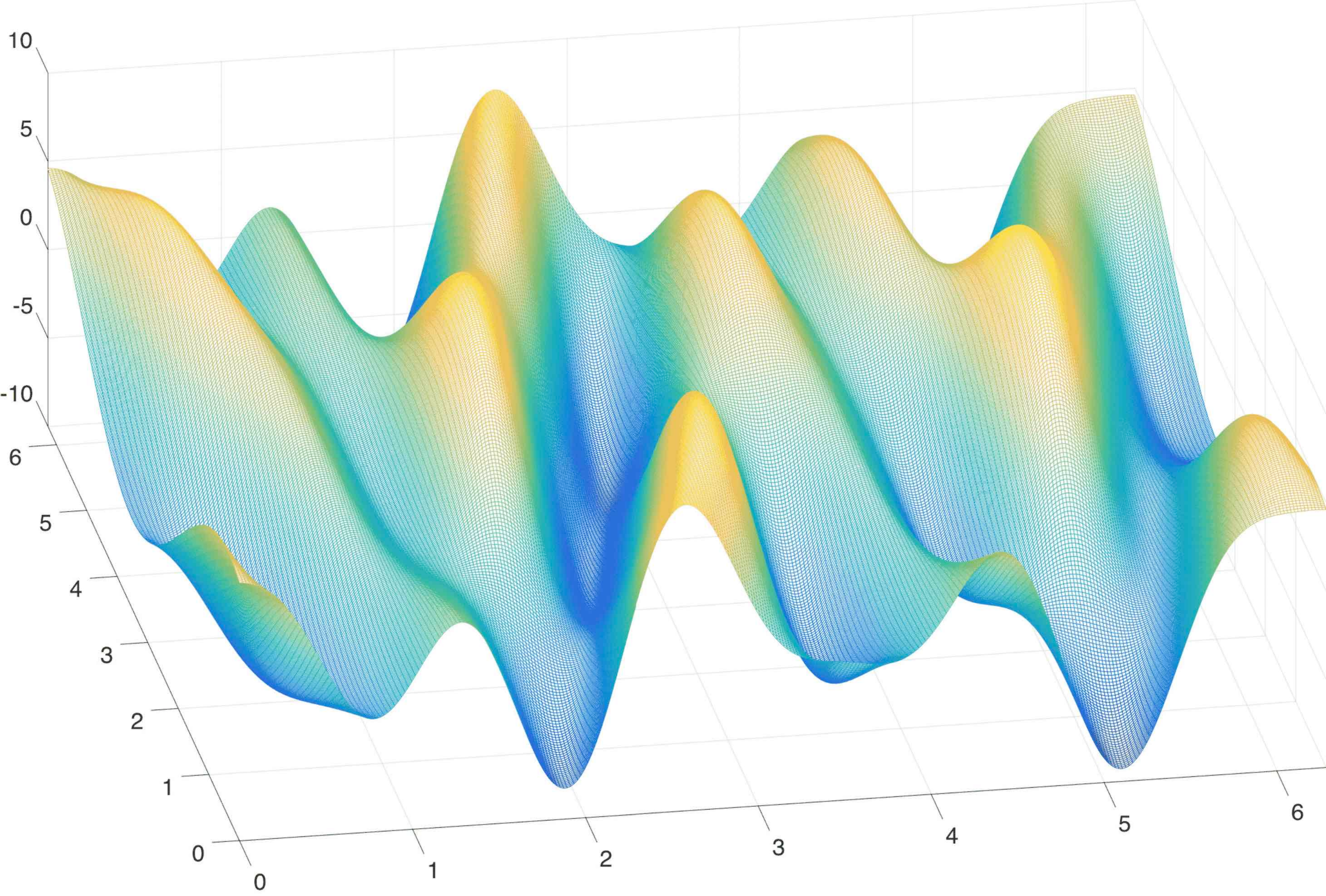}
\end{center}
\caption{{\bf [Super-Resolution, Section~\ref{sec:SR}]} The super-resolution random field $Z(\cdot)$ depicts the observation of a point source at location $x_0$ on the interval $[0,2\pi)$ (spatial domain on the $x$-axis) with phase $\theta_0$ given on the $y$-axis. The $z$-axis corresponds to value of the profile likelihood $Z(x,\theta)$ at point $x$ with phase $\theta$. This figure is presented in \citet[Fig.~2]{azais2020testing}.}
\label{fig:super_resolution}
\end{figure}

We consider the framework of \citet{azais2020testing} where one observes $n=2f+1$ noisy frequencies between $-f$ and $f$ with $f\geq1$. The $\ell^{th}$ Fourier coefficient of a point source (Dirac mass) at location $x_0$ with amplitude $\lambda_0$ and phase $\theta_0$ is $\lambda_0\, e^{\imath \theta_{0}}{e^{-\imath \ell x_{0}}}$. The observation is given by $Y=(y_\ell)_\ell$ where $-f\leq \ell\leq f$, they are complex random variables given by
\[
    y_\ell=\lambda_0\, e^{\imath \theta_{0}}{e^{-\imath \ell x_{0}}}+\sigma W_\ell\,
\]
with $\lambda_0>0$ the amplitude, $t_0=(x_0,\theta_0)\in[0,2\pi)^2$ the location and the phase of the source, and $\sigma>0$ the standard deviation of the noise. The noise is given by $W_\ell=\xi_{\ell,1}+\imath \xi_{\ell,2}$ with~$\xi_{\ell,p}$ independent standard Gaussian variables, $-f\leq\ell\leq f$ and $p=1,2$. 

\medskip

The profile likelihood is given by 
\[
Z(t)=\langle Y,\psi_t \rangle_E
    =\lambda_0 \cos(\theta-\theta_0) \mathbf{D}_f(x-x_0)+\sigma X(t)\,
\]
with $t=(x,\theta)$, $M=[0,2\pi)^2$ is the $2$-Torus, $E=\mathds C^n$ equipped with the standard complex inner product, $X(\cdot)$ is a (real valued) centered Gaussian random field with covariance function $c(\cdot,\cdot)$, $\mathbf{D}_f$ is the Dirichlet kernel. The feature map, the Dirichlet kernel and the covariance function are given by
\begin{align*}
    \psi_t          &= e^{\imath \theta}(e^{\imath f x},\ldots,e^{-\imath f x})\in\mathds C^n\,,\\
    c(t,t')         &= \cos(\theta-\theta') \mathbf{D}_f(x-x')\,,\\
    \mathbf{D}_f(u) &= \frac{\sin(f u/2)}{\sin(u/2)}\,.
\end{align*}
An illustration of the super-resolution (profile likelihood) random field $Z(\cdot)$ is given in Figure~\ref{fig:super_resolution}. One can check that the assumptions of the present article are satisfied by the super-resolution random field. The spacing test is given by 
\[
	\mathbf{S}_\sigma=\frac{
		\sigma(\alpha_1\lambda_1+\alpha_2)\,\phi(\lambda_1/\sigma)+(\alpha_1\sigma^2-\alpha_3^2)\,(1-\Phi)(\lambda_1/\sigma)
		}{
		\sigma(\alpha_1\lambda_2+\alpha_2)\,\phi(\lambda_2/\sigma)+(\alpha_1\sigma^2-\alpha_3^2)\,(1-\Phi)(\lambda_2/\sigma)
		}\,,
\]
where $\alpha_1,\alpha_2,\alpha_3$ have explicit forms given in \cite[Proposition~10]{azais2020testing}, $\lambda_1$ is the maximum of $Z(\cdot)$ and~$\lambda_2$ the maximum of $Z^{|t_1}(\cdot)$. The $t$-spacing is explicitly described in \citet[Proposition~11]{azais2020testing}.


\subsection{Two-layer neural networks with smooth rectifier} 	
\label{sec:neural-net}
\index{Neural networks}
We observe a $N$-sample $(x_i,y_i)_{i\in[N]}$, where $x_i\in\mathds R^n$ is the input and $y_i\in\mathds R$ the output. The output is such that $y_i=h(x_i)+\sigma w_i$ for some measurable target function $h\,:\,\mathds R^n\to\mathds R$, standard deviation $\sigma>0$, and $w_i$ standard Gaussian variable.

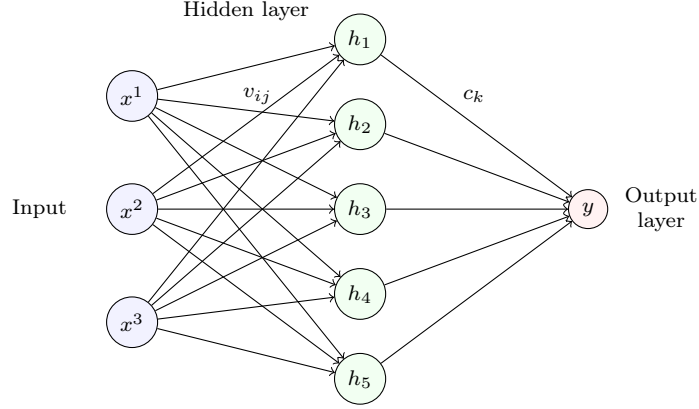
\begin{figure}[!ht]
\begin{center}
\begin{tikzpicture}[scale=1.5]
  \foreach \i in {1,...,3}
    \node[circle, draw=black, fill=blue!05] (input-\i) at (0,1-\i) {$x^\i$};
  
  \foreach \h in {1,...,5}
    \node[circle, draw=black, fill=green!05] (hidden-\h) at (2,-0.75*\h  +1.25) {$h_\h$};
  
    \node[circle, draw=black, fill=red!05] (output-1) at (4,-1) {$y$};
  
  \foreach \i in {1,...,3}
    \foreach \h in {1,...,5}
      \draw[->] (input-\i) -- (hidden-\h);
  
  \foreach \h in {1,...,5}
      \draw[->] (hidden-\h) -- (output-1);
  
  \node[align=center, anchor=east] at (-0.5, -1) {Input};
  \node[align=center] at (1, 0.75) {Hidden layer};
  \node[align=center, anchor=west] at (4.25, -1) {Output\\layer};
  \node[align=center] at (1.1,0) {$v_{ij}$};
  \node[align=center] at (3, 0) {$c_k$};
\end{tikzpicture}
\end{center}
\caption{a two-layer neural network with $c_k$ and $v_{i,j}$ real numbers.}\label{f:neurone}
\end{figure}

\noindent
We consider a two layer neural network given by
 \begin{itemize}
 \item  {\bf (hidden layer)} a layer of $r$ neurons with activation a $C^2$ function $\rho(\cdot)$;
    \item  {\bf (output layer)} and a layer  which is simply a mean;
 \end{itemize}
 See Figure \ref{f:neurone} for an illustration. As a consequence the output is $\frac 1r \sum_{k=1} ^r  c_k \rho(\langle v_k,x_i\rangle)$.

\medskip

The unknown parameters are the weights $c_k$ and the directions $v_k$ of the neurons, for $k \in [r]$. We  assume that $v_k \in\mathds S^{n-1}$. Using the trick that the mean of~$r$ reals is the least squares estimator, we have to minimize
\[
 \frac 1N  \sum_{i=1} ^N\Big( y_i -\frac 1r  \sum_{k=1} ^r c_k \rho(\langle v_k,x_i\rangle)\Big)^2  =\frac 1{Nr^2}
  \sum_{i=1} ^N
  \sum_{k=1} ^r \Big( y_i - c_k \rho(\langle v_k,x_i\rangle)\Big)^2\,,
\]
and we recognize the mean-square training error in the left hand side and the Euclidean distance between $Y=(y_i)_{i,k}$ and $(c_k \rho(\langle v_k,x_i\rangle))_{i,k}$ on the right hand side in $E=\mathds R^{N\times r}$ equipped with the dot product 
\[
    \langle (a_{i,k}),(b_{i,k})\rangle_E
        =\frac1N\sum_{i=1}^N
            \Big(\frac1r\sum_{k=1}^ra_{i,k}\Big)
            \Big(\frac1r\sum_{k=1}^rb_{i,k}\Big)\,.
\] 
The Gaussian regression problem is $Y=H+\sigma W$ where $H=(h(x_i))_{i,k}$ and $W=(W_i)_{i,k}$. To enter into the framework of Section \ref{s:kernel} we need to perform the following change of variables
\[
   c_k \rho(\langle v_k,x_i\rangle) =  \lambda  \frac {c_k  V}{\lambda } \frac{ \rho(\langle v_k,x_i\rangle)}{V} =:
    \lambda a_k  \frac{ \rho(\langle v_k,x_i\rangle)}{V},
\]
 with $ V^2 := \sum_{i=1}^N  \rho^2(\langle v_k,x_i\rangle)$ and $ \lambda^2 := V^2(c_1^2 + \cdots  +c_r^2)$ are normalizing constants. One can check that $t= (a_1,\dots,a_r,v_1,\dots,v_r) \in \mathds S^{r-1} \times (\mathds S^{n-1})^{r}$ with $\mathds S^{r-1}$ the Euclidean sphere of $\R^r$ and that ${\mathrm{M}} =\mathds S^{r-1}\times (\mathds S^{n-1})^{r}$. We have 
\[
 \psi_t := \Bigg( \frac{ a_k \rho(\langle v_k,x_i\rangle)}
 { \sqrt{ \sum_{i'}\rho^2(\langle v_k,x_{i'}\rangle)   }}
  \Bigg)_{\substack{k\in[r] \\ i\in[N]}}
  \text{ and }
c(s,t) := \langle \psi_s,\psi_t\rangle_E\,,
\]
and the profile likelihood is $Z(t)=\langle H,\psi_t\rangle_E+\sigma X(t)$ with $X(t)=\langle W,\psi_t\rangle_E$ having covariance function $c(s,t)$.

\subsection{Two-spiked tensor profile likelihood}
\label{sec:appendix_two_spiked}

\begin{lemma}
\label{lem:bulinsk}
With probability $1$, it holds that $\theta_1 \neq k \pi/2  $.
\end{lemma}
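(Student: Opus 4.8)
\emph{Plan.} The strategy is to rule out a global maximum of the profile likelihood at an angle $\theta_1$ equal to an integer multiple of $\pi/2$ by turning the maximum conditions into an over-determined system on the Stiefel factor and invoking a Bulinskaya-type argument. Writing $A(x):=\langle x^{\otimes k},Y\rangle$ and $B(y):=\langle y^{\otimes k},Y\rangle$, the field reads $Z(\theta,x,y)=\cos(\theta)A(x)+\sin(\theta)B(y)$, so optimising in $\theta$ for fixed $(x,y)$ forces $\cos(\theta_1)\propto A(x_1)$ and $\sin(\theta_1)\propto B(y_1)$; hence $\theta_1$ is a multiple of $\pi/2$ if and only if $A(x_1)B(y_1)=0$. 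By the symmetry exchanging the two spikes (the isometry $(x,y)\mapsto(y,x)$ together with $\theta\mapsto \pi/2-\theta$, which swaps $A$ and $B$) and the reflection $\theta\mapsto\theta+\pi$, I may reduce to excluding the case $B(y_1)=0$, i.e. $\theta_1\in\{0,\pi\}$, and by the invariance \eqref{e:invariancia_c} I may place the maximiser at $(\theta_1,x_1,y_1)=(0,e_1,e_2)$.

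Next I would read off the necessary conditions for a maximum at $(0,e_1,e_2)$ from the gradient \eqref{eq:grad_riem_two_spiked} and the Riemannian Hessian computed above. Vanishing of the gradient at $\theta=0$ gives $Y_{[2]}=0$, $Y_{[1],2}=0$ and $Y_{[1],j}=0$ for $j\geq3$, whereas the $n-2$ tangent $y$-directions $\Delta_{j2}$ carry no first-order information since their gradient entries are proportional to $\sin\theta$; this is precisely the degeneracy of $\Lambda_2(\theta)$ seen in \eqref{e:Lambda2-2spike}. The missing conditions come from the second-order test: for each $j\geq3$ the principal $2\times2$ minor of the Hessian indexed by $(\theta,\Delta_{j2})$ is $\left(\begin{smallmatrix}\pm A & Y_{[2],j}\\ Y_{[2],j} & 0\end{smallmatrix}\right)$, whose negative semidefiniteness requires determinant $-Y_{[2],j}^2\geq0$ and therefore forces $Y_{[2],j}=0$. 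Collecting everything, a maximum at $\theta_1=0$ forces the simultaneous vanishing of the $2n-2$ linear functionals $Y_{[2]}$, $\{Y_{[2],j}\}_{j\geq3}$, $Y_{[1],2}$ and $\{Y_{[1],j}\}_{j\geq3}$ of $Y$.

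I would then assemble these into a single field $G\colon\mathcal M_{n,2}\to\mathbb R^{2n-2}$, defined invariantly by $G(x,y)=\bigl(B(y),\,\{k\langle v\otimes y^{\otimes(k-1)},Y\rangle\}_{v},\,\{k\langle w\otimes x^{\otimes(k-1)},Y\rangle\}_{w}\bigr)$, where $v$ runs over an orthonormal basis of $\{x,y\}^{\perp}$ and $w$ over an orthonormal basis of $T_xS^{n-1}$. The field $G$ is polynomial in $(x,y)$ and linear in $Y$, hence a smooth Gaussian field, and the event $\{\theta_1=0\}$ is contained in $\{\exists(x,y)\in\mathcal M_{n,2}:G(x,y)=0\}$. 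Since $\dim\mathcal M_{n,2}=2n-3$ is strictly smaller than the target dimension $2n-2$, the Bulinskaya lemma (see, e.g., \citet{Azais_Wschebor_09}) yields $\mathbb P\{\exists(x,y):G(x,y)=0\}=0$, provided $G(x,y)$ has a density bounded uniformly over the compact $\mathcal M_{n,2}$. At $(e_1,e_2)$ the $2n-2$ components of $G$ are, up to the constant $k$, the pairwise distinct tensor entries $Y_{2\cdots2}$, $Y_{2\cdots2j}$ $(j\geq3)$, $Y_{1\cdots12}$ and $Y_{1\cdots1j}$ $(j\geq3)$, which are independent with positive variance; thus $\Var(G(e_1,e_2))$ is diagonal and nonsingular, and \eqref{e:invariancia_c} together with continuity and compactness propagates the uniform lower bound on its smallest eigenvalue to all of $\mathcal M_{n,2}$.

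The step I expect to be the crux is the second-order bookkeeping: one must check that the degenerate directions $\Delta_{j2}$ contribute exactly the off-diagonal Hessian entries $Y_{[2],j}$, so that the system acquires the one extra equation ($2n-2$ conditions against $\dim\mathcal M_{n,2}=2n-3$) that makes it over-determined, and one must justify that these remain legitimate necessary conditions at a maximum even though $\Lambda_2(\theta)$ is singular at $\theta\in\{0,\pi\}$. Once this over-determination and the non-degeneracy of $G$ are in place, the conclusion for $\theta_1\in\{0,\pi\}$ is immediate, and the remaining values $\theta_1\in\{\pi/2,3\pi/2\}$ follow by the spike-exchange symmetry, giving $\theta_1\neq k\pi/2$ almost surely.
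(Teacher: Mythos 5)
Your proof is correct, but it reaches the count of $2n-2$ conditions by a genuinely different route from the paper. The paper's proof exploits the fact that at $\theta=0$ the profile likelihood $Z(0,x_1,y)=A(x_1)$ does not depend on $y$ at all, so that \emph{every} $(0,x_1,y)$ with $y\perp x_1$ is a global maximiser; differentiating in $\theta$ at each element of an orthonormal basis of $x_1^{\perp}$ yields the $n-1$ first-order conditions $Y_{[2]}=\dots=Y_{[n]}=0$, which together with the $n-1$ sphere-gradient conditions $Y_{[1],2}=\dots=Y_{[1],n}=0$ give a field $F$ defined on $\mathds S^{n-1}$ alone (dimension $n-1$) with values in $\R^{2n-2}$ — a comfortable codimension gap of $n-1$, and only first-order information is used. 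You instead keep the full Stiefel parameter space of dimension $2n-3$ and recover the missing $n-2$ equations $Y_{[2],j}=0$ from the second-order test: the $(\theta,\Delta_{j2})$ principal minor of the Riemannian Hessian at $\theta=0$ has a vanishing diagonal entry in the $\Delta_{j2}$ slot, so negative semidefiniteness at the maximum forces the off-diagonal entry $Y_{[2],j}$ to vanish. That is legitimate (the Riemannian Hessian at an interior maximum of a $\mathcal C^2$ function on a compact boundaryless manifold is negative semidefinite, and the singularity of $\Lambda_2(\theta)$ at $\theta=0$ is a statement about the law of the gradient, not about the validity of these deterministic optimality conditions), and your non-degeneracy check is sound since the $2n-2$ components correspond to distinct entries of the symmetric tensor and are therefore independent with positive variances. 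The trade-offs: your argument is over-determined by exactly one, so it leans on the correctness of the Hessian computation (in particular the off-diagonal entry $c\,Y_{[2],j}$, which is right even though the paper's list of Euclidean derivatives has a $\sin\theta$/$\cos\theta$ typo there) and on a uniform bound for the density of a field involving second derivatives; the paper's argument is more elementary and more robust, needing only gradients, at the price of the one clever observation that the whole fibre over $y$ consists of maximisers.
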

\begin{proof}
We prove that $\theta_1  \neq 0 $ a.s., the other cases are equivalent. Note that $\theta_1 =0 $ implies that there exists a point $ x_1$ such that $ f(x_1)$ is the maximum of $f(x) :=\langle x^{\otimes k}, Y\rangle_E$ on $\mathds S^{n-1}$. As a consequence the gradient along $\mathds S^{n-1}$ is zero. Now, note that the derivative with respect to $\theta $ at $\theta =0$  of $Z(\theta, x_1, y)$  is zero  for every $y$ orthogonal  to~$ x_1$. Choosing  an orthonormal basis we obtain $n-1$ such derivatives.
  
Then, we use the Bulinskaya lemma \citep[Prop  6.11]{Azais_Wschebor_09}. We denote by $F(x), x \in \mathds  S^{n-1}$ the random field defined on a set of dimension $n-1$ with values in $\mathds R^{2n-2}$ given by the $2n-2$ derivatives above. To use the Bulinskaya lemma we need to prove that
\begin{itemize}
    \item[{\rm (i)}] the function $F(\cdot)$ has $\mathcal C^1$ paths,
    \item[{\rm (ii)}] the density $p_{F(x)}$ is uniformly bounded.
\end{itemize}
Then since the dimension  of the parameter set is smaller than the dimension of the image set, a.s  there is no  point $x$ such that $F(x) =u$ 
where $u$ is any value and in particular the value $0$.  

\medskip

Condition {\rm (i)} is clear. To address Condition {\rm (ii)}, we can use  the invariance by isometry  and  study the density $p_{F(x)}$  at the particular value $x= e_1$. We can consider the derivatives of  $f(x) $ at $e_1$  along the basis of the tangent space $e_2,\ldots, e_n$. We obtain $Y_{2[1]}, \ldots, Y_{n[1]}$. 
Then we consider  the derivatives in $\theta$  of  $X(\theta, x_1, y)$ with $y = e_2,\ldots, e_n$
We obtain $Y_{[2]}, \ldots, Y_{[n]}$. All these variables are independent with fixed variance so the density of $F(x)$ is bounded. 
\end{proof}

\section{The helix random field}
\label{a:helix}
This section is a proof of Lemma~\ref{lem:helix}. The argument is inspired from \citet[Lemma 4.1]{AW05onthedistribution}. Let $t\in { M}$ be a fixed point. On $L^2$, let $\Pi$ be the projector on the orthogonal complement to the subspace generated by $(X(t), \nabla X(t))$. Consider $h$ a vector of the tangent space at point $t$. Consider the exponential map $\varepsilon\mapsto\exp_t(\varepsilon h)$. This function is well defined on a neighborhood of $0$. Hence there exists $\varepsilon_0>0$ such that for all $\varepsilon\in]\varepsilon_0,\varepsilon_0[$, the point $s(\varepsilon):=\exp_t(\varepsilon h)\in { M}$ exists. The function $\varepsilon\mapsto s(\varepsilon)$ is a parametrization of the geodesic starting at point $t$ with velocity~$h$. We denote by $\varepsilon\mapsto h(\varepsilon)$ the parallel transport of $h$ along this geodesic. By Assumption \eqref{c0}, the Taylor formula of order two gives
\begin{equation}
\label{e:taylor1}
	X(s(\varepsilon))=X(t)+\varepsilon\langle \nabla X(t),h\rangle 
	+ \frac{\varepsilon^2}2\nabla^2 X(s(\varepsilon'))[h(\varepsilon'),h(\varepsilon')]\,,
\end{equation}
for some $\varepsilon'\in[0,\varepsilon]$, and
\begin{equation}
\label{e:taylor2}
	c(s(\varepsilon),t)=1+\frac{\varepsilon^2}2\Lambda_2(t)[h,h]+o(\varepsilon^2)\,,
\end{equation}
by Assumption \eqref{c1}. From \eqref{e:taylor1}, we have 
\[
\Pi(X(s(\varepsilon)))=\frac{\varepsilon^2}2\tilde R(s(\varepsilon'))[h(\varepsilon'),h(\varepsilon')]\,,
\]
Note also that $-\Pi(X(s(\varepsilon)))$ is the numerator of $X^{|t}(s(\varepsilon))$ while the denominator is given by $-\frac{\varepsilon^2}2\Lambda_2(t)[h,h]+o(\varepsilon^2)$ thanks to \eqref{e:taylor2}. We deduce that
\[
X^{|t}(s(\varepsilon))=\frac{\tilde R(s(\varepsilon'))[h(\varepsilon'),h(\varepsilon')]}{\Lambda_2(t)[h,h]+o(1)}
\]
From \eqref{d:Rtecho} and passing to the limit, we deduce that
\[
\lim_{\varepsilon\to0}X^{|t}(s(\varepsilon))=\frac{\tilde R(t)[h,h]}{\Lambda_2(t)[h,h]}\,,
\]
invoking that $\tilde R$ is continuous by regression formulas and Assumption \eqref{c0}, and that $\Lambda_2(t)[h,h]$ is positive by Assumption \eqref{c3}. 

For the second and last statement, observe that 
\[
\limsup_{s\to t}\, X^{|t}(s)
\geq \sup_{\|h\|_2=1}\lim_{\varepsilon\to0}X^{|t}(\exp_t(\varepsilon h))
=\max_{\|h\|_2=1}\frac{\tilde R(t)[h,h]}{\Lambda_2(t)[h,h]}
=\frac{\tilde R(t)[h_0,h_0]}{\Lambda_2(t)[h_0,h_0]}
\]
where the vector $h_0$ exists by continuity of $h\mapsto{\tilde R(t)[h,h]}/{\Lambda_2(t)[h,h]}$ on the Euclidean sphere, which is compact. Now, let $\delta>0$ and let $s_n$ be a sequence such that $s_n\to t$ and 
\[
\lim_{n\to\infty}\, X^{|t}(s_n)\geq \limsup_{s\to t}\, X^{|t}(s)-\delta\,.
\]
Note that the exponential map is a local diffeomorphism on a neighborhood of the point $t$. Hence, there exist a sequence of positive reals $\varepsilon_n$ converging to zero and a sequence $h_n$ of unit norm tangent vectors such that $s_n=\exp_t(\varepsilon_n h_n)$. Since the Euclidean sphere is compact, we can extract a sequence such that $h_n$ converges to unit norm tangent vector $\bar h$. The Taylor formula gives that 
\begin{align*}
X(s_n)&=X(t)+\varepsilon_n\langle \nabla X(t),h_n\rangle 
	+ \frac{\varepsilon_n^2}2\nabla^2 X(\tilde s_n)[\tilde h_n,\tilde h_n]\,,\\
c(s_n,t)&=1+\frac{\varepsilon_n^2}2\Lambda_2(t)[h_n,h_n]+o(\varepsilon_n^2)\,,
\end{align*}
for some $\tilde s_n$ on the geodesic between $t$ and $s_n$ and $\tilde h_n$ the parallel transport at point $\tilde s_n$ of the tangent vector $h_n$ along this geodesic. By the $\Pi$-projection argument above, we deduce that
\[
X^{|t}(s_n)=\frac{\tilde R(\tilde s_n)[\tilde h_n,\tilde h_n]}{\Lambda_2(t)[h_n,h_n]+o_n(1)}\,.
\]
Passing to the limit by continuity yields 
\[
\max_{\|h\|_2=1}\frac{\tilde R(t)[h,h]}{\Lambda_2(t)[h,h]}\geq \frac{\tilde R(t)[\bar h,\bar h]}{\Lambda_2(t)[\bar h,\bar h]}=\lim_{n\to\infty}\, X^{|t}(s_n)\geq \limsup_{s\to t}\, X^{|t}(s)-\delta\,.
\]
Note that the most left hand side term does not depends on $\delta>0$, hence 
\[
\max_{\|h\|_2=1}\frac{\tilde R(t)[h,h]}{\Lambda_2(t)[h,h]}\geq \limsup_{s\to t}\, X^{|t}(s)\,,
\]
which concludes the proof.

\newpage

\section{Ad-hoc {K}ac-{R}ice formula}
\label{app:weights}
 The paper \citet[Theorem~7.2]{Azais_Wschebor_09}  concerns weighted sum  of number of roots  when  the weight  is a continuous function of time and of the level.   This has been extended, by a monotone convergence argument  to the case of lower semi-continuous weights  in \citet[Section~7]{armentano2023general}). 
 However  this is not sufficient  mainly because  the regularity of $\lambda_2^t$ as a function of $t$ is difficult  to control. For this reason  we must used  the following tailored argument.

Denote by $\rm{dist} (s,t)$ the geodesic distance between points $s$ and $t$. Define
\begin{equation} 
	\label{eq:lambdaT_eps}
	\forall t\in { M},\quad
	\lambda_{2,\epsilon}^t:=\sup_{s\in M\text{ s.t. } \mathrm{dist} (s,t) >\epsilon}\sigma X^{|t}(s)
	\quad \text{and} 
	\quad \lambda_{2,\epsilon} := \lambda_{2,\epsilon}^{ {t_1}}\,,
\end{equation}
and note that $\lambda_{2,\epsilon}^t$ is a continuous function of $t$. Define a monotone  approximation $\xi_n(\cdot)$ of the indicator function ${\1}\{\cdot \in B\}$. Then
\begin{equation}
	\label{e:jm:mon}
   	\xi_n( \sigma X(t), \lambda_{2,1/n}^t, \sigma R(t)) \uparrow \1\{(\sigma X(t), \lambda_{2}^t, \sigma R(t)\in B\}\,.
   \end{equation}
   So we can use  \citet[Section~7]{armentano2023general} for instance to compute the expectation of the left side of \eqref{e:jm:mon}. Indeed, all conditions are clear except Condition c) of \citet[Section~7]{armentano2023general} for $\lambda_{2,\epsilon}$ which is detailed  hereunder. And then use monotone convergence theorem gives the result.
  
 \subsubsection*{Checking Condition c) of \citet[Section~7]{armentano2023general}}
  Using regression formulas, the distribution of  $X( \cdot)$  under the condition $X(t_0) =u$  admits the representation 
\[
   X(t)  =  \tilde X(t) +u f(t), 
\]
where $\tilde  X(t) $ corresponds to the distribution conditional on $X(t_0)=0$. Our goal  is to show the continuity of the distribution of $\lambda_2$ in this representation.  The  conditioned random field $   \tilde X(\cdot) $ satisfies 
\[
\forall  \{s\neq t\}\,,\quad  \rm{Cor}(X(s),X(t))<1\,.
\]
 so that, by the Tsirelson theorem,  the maximum  of  $\tilde X(\cdot) +u f(\cdot)$  is a.s.  unique. The first consequence is the continuity, as a function of $u$,   of $\hat  t$. For $t$ fixed, under $X(t_0) =u$,
\[
    X^{|t}(s) = \tilde  X^{|t}(s)  + u  g^{|t}(s),  \quad \rm{dist} (s,t) > \epsilon\,,
\]
with obvious notation. The proof of Lemma  \ref{lem:independent} shows that $ g^{|t}(s)$ is bounded uniformly in $s$ and $ t$. This gives the desired continuity, as a function of $u$,  of the distribution of $\lambda_{2,\epsilon}^t$ and then of $\lambda_{2,\epsilon}$.

\newpage

\section{Proof of Theorem~\ref{thm:joint_law}}
\label{sec:proof_joint_law}

\begin{proof}
Consider the set of parameters
\begin{equation*}
 {\cal{B}} := \Big\{(\ell_1,\ell_2,r)\in \bbR^2\times \mathcal{S}_{d}\ :\ \ell_2\leq \ell_1\Big\}\,,
\end{equation*}
 where $\mathcal{S}_{d}$ is the set of $d\times d $ symmetric matrices. Let $B_1$ be an open set of $\bbR\times \mathcal{S}_d$ and $b\in\bbR$, define 
 \[
 B=\Big\{(\ell_1,\ell_2,r)\in \bbR^2\times \mathcal{S}_{d}\ :\ \ell_2< \ell_1\,,\ \ell_2<b\,,\ (\ell_1,r)\in B_1\Big\}\,.
 \] 
Note that $B$ is an open set of $\mathcal B$. Note that this class of open sets generates the Borel sigma algebra of $\mathcal B$. Hence, in order to derive the joint law of the triplet $(\lambda_1,\lambda_2, \Omega)$, we compute 
\begin{subequations}
\begin{align} 
 \P\big\{&(\lambda_1,\lambda_2, \Omega) \in B\big\} \notag\\
 &=\P\big\{\exists t\in { M}\ :\ \nabla X(t)=0\,,\ 
 (\sigma X(t),\lambda_2^t, \sigma R(t))\in B\big\}\label{e:krf}\\
 &= \E\big[\#\big\{t\in { M}\ :\ \nabla X(t)=0\,,\ (\sigma X(t),\lambda_2^t, \sigma R(t))\in B\big\}\big]\label{e:krf2}
 \\
  &= \int_{ M} \E\Big[ \big|\det\big(-\nabla^2 X(t)\big)\big| {\1}_{(\sigma X(t),\lambda_2^t, \sigma R(t))\in B} \Big|\nabla X(t) =0\Big] \,
 p_{\nabla X(t)}(0) \,\mathrm d\nu(t)\label{e:krf2b}\\
 &= \int_{ M} \E\Big[ \det\big(-\nabla^2 X(t)\big) {\1}_{(\sigma X(t),\lambda_2^t, \sigma R(t))\in B} \Big|\nabla X(t) =0\Big] \,
 p_{\nabla X(t)}(0) \,\mathrm d\nu(t)\label{e:krf3}\\
 &= \int_{ M} \E\Big[\det(X(t)\Lambda_2 (t)-\tilde R(t)) {\1}_{(\sigma X(t),\lambda_2^t, \sigma R(t))\in B}\Big] \,
 p_{\nabla X(t)}(0) \,\mathrm d\nu(t) \label{e:krf4}\\
 &= \int_{ M} \E\Big[\det(X(t)\mathrm{Id} - R(t)) {\1}_{(\sigma X(t),\lambda_2^t,\sigma R(t))\in B}\Big]\,
 p_{\nabla X(t)}(0) \det\Lambda_2(t) \,\mathrm d\nu(t)\,,\label{e:krff}
\end{align}
\end{subequations}
where $\nu $ is the surfacic measure on ${ M}$ and 
\begin{itemize}
\item we used that $(\sigma X(t),\lambda_2^t, \sigma R(t))\in B$ implies $\lambda_2^t\leq \sigma X(t)$, and invoked Lemma~\ref{lem:selection_event} in \eqref{e:krf};
\item we used the uniqueness of the maximum \eqref{lem:tsirelson} in \eqref{e:krf2};
\item we used a {\it {K}ac-{R}ice formula} to get the third equality \eqref{e:krf2b}, a proof is given in Appendix~\ref{app:weights};
\item we used that $\imath_1=\imath_4$ (see Lemma \ref{lem:selection_event}) and that the Hessian~$-\nabla^2 X(t)$ is semi-definite positive at a maximum to get \eqref{e:krf3};
\item we used the Hessian regression formula \eqref{d:Rtecho} with $\nabla X(t)=0$ and invoked the 
independence of~$\nabla X(t)$ from $(\sigma X(t),\lambda_2^t,\sigma \tilde R(t))$ to get \eqref{e:krf4};
\end{itemize}

\medskip

\noindent
Now, we introduce the measures (defined up to normalizing constants)
\begin{subequations}
\begin{align} 
  \mathrm d\bar\nu (t) &:= p_{\nabla X(t)}(0) \det\Lambda_2(t) \,\mathrm d\nu(t)\,, \label{e:sigmabar}\\
  \mu^\star (\cdot) &:= \int_M \mu_t(\cdot) \,\mathrm d\bar\nu(t)\,, \label{d:measures}
\end{align}
\end{subequations}
where $\mu_t :={\cal{D}} (\lambda_2^t, \sigma R(t))$ is the distribution of  $(\lambda_2^t, \sigma R(t))$. 

\noindent
Since $X(t)$ is independent of~$(\lambda_2^t,  \sigma R(t))$, one has
\[
{\cal{D}} (\sigma X(t),\lambda_2^t, \sigma R(t)) = \mathcal N(0,\sigma^2) \otimes \mu_t\,,
\]
by Assumption \eqref{c1}. Now, coming back to \eqref{e:krff} we have
\begin{align*} 
 \P\{(\lambda_1,&\lambda_2, \Omega) \in B\} 
 = \int_{ M} \E\Big[\det(X(t)\mathrm{Id}-R(t)) {\1}_{(\sigma X(t),\lambda_2^t, \sigma R(t))\in B}\Big] \,
 \,\mathrm d\bar\nu(t)\notag\\
 &= \int_{ M} \int_{\mathds {R}^{D}} \det(\ell_1 \mathrm{Id}/\sigma -\omega/\sigma) {\1}_{(\ell_1,\ell_2, \omega)\in B}
 \varphi(\ell_1/\sigma)\,\mathrm d\ell_1 \,\mathrm d\mu_t(\ell_2,\omega)\,\mathrm d\bar\nu(t)\notag\\
 &= \int_{\mathds {R}^{D}}\int_{ M} \det(\ell_1  \mathrm{Id}/\sigma-\omega/\sigma) {\1}_{(\ell_1,\ell_2, \omega)\in B}
\varphi(\ell_1/\sigma)\,\mathrm d\ell_1\,\mathrm d\mu_t(\ell_2,\omega)\,\mathrm d\bar\nu(t)\notag\\
 &= \int_{\mathds {R}^{D}} \det(\ell_1 \mathrm{Id}/\sigma-\omega/\sigma) {\1}_{(\ell_1,\ell_2, \omega)\in B}
 \varphi(\ell_1/\sigma)\,\mathrm d \ell_1\,\mathrm d\mu^\star(\ell_2,\omega)\,,
\end{align*}
where $D=(d^2+d+4)/2$ is the dimension of $\bbR^2\times \mathcal{S}_{d}$. It implies that the joint density of $(\lambda_1,\lambda_2, \Omega)$ at point $(\ell_1, \ell_2,\omega) $ on the set $\mathcal B$ has  a  density  proportional to $\det(\ell_1 \mathrm{Id}/\sigma-\omega/\sigma)  \varphi(\ell_1/\sigma)$ with respect to ${\leb} \otimes \mu^\star$.  
This implies in turn that the density of the maximum $\lambda_1$ conditional on~$(\lambda_2, \Omega)$ is  proportional to $\det(\ell_1 \mathrm{Id}-\omega)  \varphi(\ell_1/\sigma){\1}_{ \ell_2\leq \ell_1}$. 
\end{proof}


\section{Proof of Proposition~\ref{prop:KLdonneND}}
\label{sec:proof_prop:KLdonneND}
\begin{proof}
Since $X$ has $\mathcal C^1$ paths almost surely, note that the covariance function $c(\cdot,\cdot)$ of the Gaussian random field~$X(\cdot)$ has continuous partial derivatives. 

Consider the centered Gaussian vector $V:=(\nabla X(t_1), X(t_1),X(t_{d+2}),\dots,X(t_p))$ with variance-covariance matrix $\Sigma(V)=\E[VV^\top]$ given by
\[
\newcommand\explainA{%
  \overbrace{%
    \hphantom{\begin{matrix}\partial_1\partial_1  c(t_1,t_1) & \partial_1\partial_2  c(t_1,t_1) & \dots & \partial_1\partial_d  c(t_1,t_d)\end{matrix}}%
  }^{\text{$\nabla X(t_1)$}}%
}
\newcommand{\explainB}{%
  \overbrace{%
    \hphantom{\begin{matrix}\partial_1  c(t_1,t_1) & \partial_1  c(t_1,t_{d+2})& \dots&  \partial_1  c(t_1,t_{p})\end{matrix}}%
  }^{\text{$(X(t_1),X(t_{d+2}),\dots,X(t_p))$}}%
}
\newcommand{\explainC}{%
  \left.\vphantom{\begin{matrix}0\\0\\\ddots\\0\\0\end{matrix}}\right\}%
  \text{\scriptsize$m_1$ lignes}%
}
\newcommand{\explainD}{%
  \left.\vphantom{\begin{matrix}0\\0\\0\\0\\0\\0\end{matrix}}\right\}%
  \text{\scriptsize$n-m_1$ lignes}%
}
\settowidth{\dimen0}{%
  $\begin{pmatrix}\vphantom{\begin{matrix}0\\0\\0\\0\\0\\0\\0\end{matrix}}\end{pmatrix}$%
}
\settowidth{\dimen2}{$\explainB$}
\hspace*{-0.5\dimen0}
\begin{matrix}
	\begin{matrix}\hspace*{0.5\dimen0}\explainA&\hspace*{0.5\dimen0}\explainB\hspace*{0.5\dimen0}
	\end{matrix}
	\\[-0.5ex]
	\begin{pmatrix}
		\begin{matrix}
\partial_1\partial_1  c(t_1,t_1) & \partial_1\partial_2  c(t_1,t_1) & \dots & \partial_1\partial_d  c(t_1,t_1)\\
\partial_2\partial_1  c(t_1,t_1) & \partial_2\partial_2  c(t_1,t_1) & \dots & \partial_2\partial_d  c(t_1,t_1) \\
\vdots & \vdots & \ddots & \vdots \\
\partial_d\partial_1  c(t_1,t_1) & \partial_d\partial_2  c(t_1,t_1) & \dots & \partial_d\partial_d  c(t_1,t_1) 
\end{matrix}
&
	\begin{matrix}
\partial_1  c(t_1,t_1) & \partial_1  c(t_1,t_{d+2})& \dots&  \partial_1  c(t_1,t_{p})\\
\partial_2  c(t_1,t_1) & \partial_2  c(t_1,t_{d+2})& \dots&  \partial_2  c(t_1,t_{p}) \\
\vdots & \vdots & \ddots & \vdots \\
\partial_d  c(t_1,t_1) & \partial_d  c(t_1,t_{d+2})& \dots&  \partial_d  c(t_1,t_{p})
\end{matrix}
\\
& \\
\text{\huge$\star$} & 
		\begin{matrix}
 c(t_1,t_1) &   c(t_1,t_{d+2})& \dots&   c(t_1,t_{p})\\
 c(t_{d+2},t_1) &   c(t_{d+2},t_{d+2})& \dots&    c(t_{d+2},t_{p}) \\
\vdots & \vdots & \ddots & \vdots  \\
  c(t_p,t_1) &  c(t_p,t_{d+2})& \dots&   c(t_p,t_{p})
\end{matrix}
\end{pmatrix}

\end{matrix}
\]
where $ \partial_i\partial_j  c(x,x):=\E[(\partial X/\partial x_i )(x)(\partial X/\partial x_j)(x)]$ and $ \partial_i c(x,y):=\E[(\partial  X/\partial x_i )(x) X(y)]$ with $(\partial X/\partial x_i )(x)$ the partial derivative with respect to $x_i$ at point $x=(x_1,\ldots,x_d)\in M$ in Riemannian normal coordinates, and $y\in\mathrm M$. 

Denote by $\mathds H$ the RKHS defined by $ c(\cdot,\cdot)$. By a standard result, see for instance \citet[Corollary 4.36]{steinwart2008support}, it holds that $\partial_i\Phi(t_j)$ belongs to $\mathds H$ and 
\begin{align*}
 c(t_i,t_j)&=\langle \Phi(t_i),\Phi(t_j) \rangle_{\mathds H};\\
\partial_i  c(t_1,t_j)&=\langle \partial_i\Phi(t_1),\Phi(t_j) \rangle_{\mathds H};\\
\partial_i\partial_j  c(t_1,t_1)&=\langle \partial_i\Phi(t_1),\partial_j\Phi(t_1) \rangle_{\mathds H};
\end{align*}
where $\Phi$ is the canonical feature map of $(\mathds H,\langle \cdot,\cdot \rangle_{\mathds H})$. We recognize that $\Sigma(V)$ is the Gram matrix, for the scalar product~$\langle\cdot,\cdot \rangle_{\mathds H}$, of the vector $\Psi$ given by $\Psi:=\big[\partial_1\Phi(t_1)\,\partial_2\Phi(t_1)\,\cdots\,\partial_d\Phi(t_1)\,\Phi(t_1)\,\Phi(t_{d+2})\,\cdots\,\Phi(t_{p})\big]\in\mathds H^{p}$, and it holds $\Sigma(V)=\Psi^\star\Psi$ where $\Psi^\star\,:\, h\in\bbH
\mapsto (\langle\partial_1\Phi(t_1),h\rangle_{\mathds H},\cdots,\langle\Phi(t_{p}),h\rangle_{\mathds H})\in\bbR^p$ is the adjoint operator of $\Psi$.

Using Assumption \eqref{c1}, Assumption \eqref{c3} and \eqref{e:ind_rf_grad}, one gets that $W=( X(t_1),\nabla  X(t_1))$ is non-degenerate, and its variance covariance matrix $\Sigma(W)$ has full rank. This matrix is also the Gram matrix of the system of vectors in~$\mathds H$ given by $\tilde\Psi :=\big[\Phi(t_1) \partial_1\Phi(t_1) \cdots \partial_d\Phi(t_1)\big]\in\mathds H^{d+1}$. We deduce that is a free system ({\it i.e.,} it spans a vector space of dimension $d+1$). One also knows that the dimension of $\mathds H$ is exactly the order of the Karhunen-Lo\`eve expansion, actually it is standard to prove the Karhunen-Lo\`eve expansion from Mercer's theorem. Hence, it is always possible to complete~$ \tilde\Psi$ by $p-d-1$ vectors of the form $\Phi(t)$ to get a $p$ dimensional free system~$\Psi$, otherwise~$\mathds H$ would be of dimension less than~$p$.
\end{proof}

\newpage

\section{Notation}
\label{app:notation}

\begin{table}[!h]
	\newcommand{\ttile}[1]{\scshape#1}
	\centering
	{\small
	\begin{tabular}{ll}
		\multicolumn{2}{c}{\textit{General notation}} \\
		\midrule
        $[a]$                                   & Set of integers $\{1,\ldots,a\}$;\\
		$A^\top$ $(\text{resp. } A_{i:})$       & transpose of a matrix $A$ (resp. $i$-th line);\\
        $\mathrm{Id}$                           & Identity matrix of dimension $d\times d$;\\
        $\mathcal S_d$                          & Space of $d\times d$ symmetric matrices;\\
        $\mathds S^{d-1}$                       & Euclidean sphere of $\mathds R^d$;\\
        $t$ (and $s$)                           & Generic value for a vector on the manifold $M$;\\
        $\mathcal C^k$                          & Set of $k$ times differentiable functions; \\
        $\delta_{jk}$                           & Kronecker symbol;\\
	$\mathrm{(const)}$                      & Positive constant which may change from line to line;\\
        $\mathcal D(U\,|\, V)$                  & Conditional distribution of $U$ with respect to $V$;\\
        $\Var(U)$                               & Variance matrix of a random vector $U$; \\
        $\Cov(U,V)$                             & Covariance matrix of the random vectors $U$ and $V$;\\
        $\phi(\cdot)$                           & Standard Gaussian density in $\R$; \\
        $p_U(\ell) $                            & Density function of the random variable/vector $U$ at point $\ell$;\\
        $\mathds 1_{A}$ (resp. $\mathds 1{\{A\}})$  & Indicator function of condition $A$ (resp. event $A$);\\
        $\mathrm{Leb}$                          & Lebesgue measure on $\mathds R$;\\
        $\mathcal U(0,1)$                       & Uniform law on $(0,1)$;\\
        $\mu\otimes \nu$                        & Product of measures;\\
    &   \\
		\multicolumn{2}{c}{Random fields} \\
		\midrule
        $Z(\cdot)$                              & Observed Gaussian random field, indexed by $M$;\\
		$m(\cdot)$                              & Mean function of $Z(\cdot);$\\
        $\sigma$                                & Standard error of the $Z(\cdot)$;\\
        $X(\cdot)$                              & Centered Gaussian random field with unit variance;\\
          $ c(s,t) $                            & Covariance function of $X(\cdot)$, $c(s,t)=\E(X(s)X(t)) $;\\
    &   \\
		\multicolumn{2}{c}{Differential geometry} \\
		\midrule
        $M$                                     &$\mathcal C^2$-compact Riemannian manifold of dimension $d$ without boundary; \\
          $\nabla f(t) $                        & Riemannian gradient at point $t\in M$ of $f\,:\,M\to\mathds R$; \\
        $\nabla_t g(s,t) $                      & Riemannian gradient at point $t\in M$ of $g(s,\cdot)$; \\
        $\nabla^2 f(t) $                        & Riemannian Hessian at point $t\in M$ of $f\,:\,M\to\mathds R$; \\
    &   \\
		\multicolumn{2}{c}{Continuous regression} \\
		\midrule
        $(E,\langle\cdot,\cdot\rangle_E)$       & Euclidean space;\\
        $\psi$                                  & Feature map from $M$ to $E$, $\psi\,:\,t\in M\mapsto \psi_t\in E$;\\
        $\psi_t$                                & Vector $\psi(t)\in E$;\\
        $J\psi_t$ (resp. ${J\psi_t}^\top$)      & (resp. transpose of) Jacobian matrix of $\psi$ at point $t$;\\
        $W$                                     & Standard Gaussian random vector of $E$;\\
        $\mathbf{S}_\sigma$ (resp. $\mathbf{T}$) 
                                                & Generalized spacing test (resp. Generalized $t$-spacing test);\\
    &   \\
  \end{tabular}
  }
\caption{List of notation: Riemannian gradients and Hessians are defined using the Levi-Civita connection. Riemannian Hessians are represented in matrix form.\label{tab:notation}}
\end{table}
\end{document}